    \def\beq{\begin{eqnarray}}
    \def\eeq{\end{eqnarray}}
    \def\beqq{\begin{eqnarray*}}
    \def\eeqq{\end{eqnarray*}}
    \def\re{\textnormal {Re}}
    \def\im{\textnormal {Im}}
    \def\p{{\mathbb P}}
    \def\e{{\mathbb E}}
    \def\crp{\bar{\mathbb{R}}^+}
    \def\orp{\mathbb{R}^+}
    \def\crn{\bar{\mathbb{R}}^-}
    \def\orn{\mathbb{R}^-}
    \def\d{{\textnormal d}}
    \def\ind{{\mathbb I}}
    \def\eq{\textbf{e}(q)}
    \def\er{\textbf{e}(r)}
    	\def\edist{\,{\buildrel d \over =}\ }
	\newtheorem{theorem}{Theorem}
	\newtheorem{lemma}{Lemma}
	\newtheorem{proposition}{Proposition}
	\newtheorem{corollary}{Corollary}
	\theoremstyle{definition}
	\newtheorem{definition}{Definition}
	\newtheorem{example}{Example}
	\newtheorem{remark}{Remark}
	\newcommand\xqed[1]{%
  	\leavevmode\unskip\penalty9999 \hbox{}\nobreak\hfill
  	\quad\hbox{#1}}
	\newcommand\demo{\xqed{$\dagger$}}
	\newcommand\rqed[1]{%
  	\leavevmode\unskip\penalty9999 \hbox{}\nobreak\hfill
  	\quad\hbox{#1}}
	\newcommand\rdemo{\rqed{$\ddagger$}}
\begin{document}	

\title{Wiener-Hopf Factorization for the Normal Inverse Gaussian Process}
\author{Daniel Hackmann
{
}}\;
 
 \date{\today}
 \maketitle

\begin{abstract}
\noindent We derive the L\'{e}vy-Khintchine representation of the Wiener-Hopf factors for the Normal Inverse Gaussian (NIG) process as well as a representation which is similar to the moment generating function (MGF) of a generalized gamma convolution (GGC). We show, via this representation, that for some parameters the Wiener-Hopf factors are, in fact, the MGFs of GGCs. Further, we develop two seperate methods of approximating the Wiener-Hopf factors, both based on Pad\'{e} approximations of their Taylor series expansions; we show how the latter may be calculated exactly to any order. The first approximation yields the MGF of a finite gamma convolution, the second that of a finite mixture of exponentials. Both provide excellent approximations as we demonstrate with numerical experiments and by considering applications to the ultimate ruin problem and to the pricing of perpetual options.
\end{abstract}
%


\section{Introduction}
In order to determine the Wiener-Hopf factorization for the Normal Inverse Gaussian (NIG) process we are required to solve the following problem:\\
\begin{adjustwidth}{0.5cm}{0.5cm}
\emph{For } $q > 0$, $\kappa > 0$, $\sigma > 0$, $\mu \in \mathbb{R}$, $\theta \in \mathbb{R}$ \emph{, factor the expression}
\begin{align}\label{eq:initprob}
\frac{q}{q - \psi(z)},\;\;z\in\iota\mathbb{R}\quad \text{\emph{where}} \quad \psi(z) := \frac{1}{\kappa} + \frac{1}{\kappa}\sqrt{1 - 2\kappa\theta z - \kappa\sigma^2 z^2} - \mu z,
\end{align}
\emph{into the product of two functions, } $\varphi_q^+(z)$ \emph{ and } $\varphi_q^-(z)$\emph{, such that: a) } $\varphi_q^+(z)$ \emph{ is the moment generating function (MGF) of an infinitely divisible (ID) probability distribution on } $[0,\infty]$ \emph{ without drift or Gaussian component; and b) } $\varphi_q^-(z)$ \emph{, is the MGF of an ID probability distribution on $[-\infty,0]$ also without drift or Gaussian component.}\\
\end{adjustwidth}
That such a factorization exists is a well established fact, see e.g. \cite{Bertoin}, Chapter VI.2. That is, we may replace $\psi(z)$ in \eqref{eq:initprob} by the Laplace exponent of any L\'{e}vy process $X$ and be certain not only that the factorization exists, but also that 
\begin{align*}
\varphi_q^{+}(z) = \e\left[e^{zS_{\eq}}\right],\,\re(z) \leq 0, \quad \text{ and } \quad \varphi_q^-(z)=\e\left[e^{zI_{\eq}}\right],\,\re(z) \geq 0,
\end{align*}
where $S$ and $I$ are the running supremum and infimum process repectively, i.e.
\begin{align*}
S_t := \sup_{0 \leq s \leq t}X_s  \quad\text{ and }\quad I_t := \inf_{0 \leq s \leq t}X_s,
\end{align*}
and $\eq$ is an exponentially distributed random variable with mean $q^{-1}$, which is independent of $X$. Consequently, the Wiener-Hopf factorization is arguably one of the most remarkable and well-known results in the theory of fluctuations of L\'{e}vy processes.\\ \\
\noindent It is easy to see that the \emph{Wiener-Hopf factors}, i.e. $\varphi_q^{-}(z)$ and $\varphi_q^{+}(z)$, if known explicitly, are in some sense the ``next best thing" to knowing the distributions of $S_t$ and $I_t$. In fact, many practical problems involving the exit of a L\'{e}vy process (or some function thereof) from a region in the state space -- examples include the calculation of ruin probabilities (see \cite{ruin} and Section \ref{sect:ruin}), whose study originates from the insurance industry, and the pricing of financial products such as barrier options (see, e.g. \cite{barbench}) -- can be solved via the Wiener-Hopf factors. The distributions of $I_{\eq}$ and $S_{\eq}$ also appear in the pricing of perpetual options (see \cite{mordorII} and Section \ref{sect:ami}) and more generally in optimal stopping problems, see e.g. \cite{Kyprianou}, Chapter 11.\\ \\
Unfortunately, explicit, tractable expressions for $ \varphi_q^{\pm}(z)$ are not known for many processes (see Chapter 6.5 in \cite{Kyprianou} and the introduction of \cite{KuPe2011_prep} for a good overview of known Wiener-Hopf factorizations). In particular, among those classes of processes with infinite activity jumps and infinite variation paths for which there is no restriction on either positive or negative jumps, only two have known, explicit factorizations. These are: a) the stable class of processes (see  \cite{Stab1}); and b) the meromorphic class of processes (see \cite{KuzKyPa2011}).\\ \\
The methods of finding tractable expressions for the Wiener-Hopf factors and determining the distributions of $I_{\eq}$ and $S_{\eq}$ for processes with two-sided jumps fall into roughly three categories: a) by inspection; b) by solving the equivalent problem of factorizing \eqref{eq:initprob} into the product of two functions analytic and zero-free on the left and right half-planes respectively (plus a growth condition) (see \cite{Kuz2010c}, Theorem 1. (f)); c) by evaluating a general integral representation (see \cite{Kuz2010c}, Theorem 1. (b)). Method a) is only applicable in the simplest cases, e.g. when $X$ is a Brownian Motion, and the integral representation of method c) is not generally tractable, although, with some rather inspired methods, it has been used in the stable case \cite{Stab1}. Method b) is primarily useful when $\psi(z)$ is a meromorphic or rational function, as, in this case, it is possible to group poles and zeros to determine the Wiener-Hopf factors (see for example \cite{mordor} and \cite{KuzKyPa2011}). This approach is not applicable when $\psi(z)$ has branching singularities, as is the case for the NIG process as well as many other processes popular in applications (e.g. CGMY or KoBoL processes and the Variance Gamma process).\\ \\
\noindent Before describing the approach taken in this article, which differs from the above mentioned three, we take a moment to consider a special case of \eqref{eq:initprob}. Let $q = 1/\kappa$ and $\mu = 0$. In this case it is easy to derive the factorization
\begin{align*}
\frac{q}{q - \psi(z)} = \frac{1}{\sqrt{1 - 2\kappa\theta z - \kappa\sigma^2 z^2}} =  \left(1 - \frac{z}{\hat{\rho}}\right)^{-\frac{1}{2}}\left(1 - \frac{z}{\rho}\right)^{-\frac{1}{2}}
\end{align*}
where $\rho$ and $\hat{\rho}$ are just the positive and negative zeros of $p(z) := 1 - 2\kappa\theta z - \kappa\sigma^2 z^2$ respectively. We conclude that $S_{\eq}$ and $-I_{\eq}$ are gamma distributed random variables. Writing, for example,
\begin{align*}
\left(1 - \frac{z}{\rho}\right)^{-\frac{1}{2}} = \exp\left(\int_{\mathbb{R}^{+}}\log\left(\frac{u}{u-z}\right)\frac{\delta_{\rho}(\text{d}u)}{2}\right),
\end{align*}
we might conjecture that, in general, $\varphi_q^{\pm}(\pm z)$ has the form
\begin{align}\label{eq:thebasis}
\exp\left(\int_{\mathbb{R}^{+}}\log\left(\frac{u}{u-z}\right)\tau_q(\text{d}u)\right),\quad \re(z) \leq 0,
\end{align}
which, given the right restrictions on the measure $\tau_q$, would imply that the distributions of $S_{\eq}$ and $-I_{\eq}$ belong to the class of generalized gamma convolutions (GGCs) (see \cite{Bondesson}).\\ \\
\noindent It turns out that this conjecture is nearly correct. In particular, we show in Corollary \ref{cor:gegc} that $\varphi_q^{\pm}(\pm z)$ has the form \eqref{eq:thebasis}, but that $\tau_q$ is not necessarily a positive measure. Our approach in deriving this representation differs from the approaches discussed thus far. It is based on the idea that we can derive the L\'{e}vy measures of $S_{\eq}$ and $I_{\eq}$ by considering the inverse Laplace transform of the function $\Phi_q(z) := \frac{\text{d}}{\text{d}z}\log\left(\e[e^{zX_{\eq}}]\right)$. This allows us to derive the L\'{e}vy-Khintchine representation of $\varphi_q^{\pm}(z)$ in Theorem \ref{theo:main}, which leads almost directly to the representation \eqref{eq:thebasis} and an explicit formula for the measure $\tau_q$. This representation of the Wiener-Hopf factors is tractable in the sense that we are able to generate a full Taylor series expansion of $\varphi_q^{\pm}(\pm z)$ by calculating the negative of moments of $\tau_q$. This we are able to do exactly, i.e. without numerical integration, for all orders (see Section \ref{sect:techdets}). An important consequence of this fact, is that we are able to calculate Pad\'{e} Approximants (rational approximations) based on the Taylor series expansion, which yield, by virtue of an interesting connection to the theory of Stieltjes functions and an important theorem due to Rogers \cite{Rogers}, convergent approximations of $\varphi_q^{\pm}(\pm z)$ that are either: a) the MGFs of finite gamma convolutions (GCs) (when $\tau_q$ is positive); or b) the MGFs of finite exponential mixtures (MEs) (irrespective of whether $\tau_q$ is positive or not). The convergence of the approximations to $\varphi_q^{\pm}(\pm z)$ is exponential in the degree $n$ of the rational approximation, and the approximating distributions match the first $2n - 1$ moments of the distributions of $S_{\eq}$ and $-I_{\eq}$. \\ \\
\noindent This article is organized as follows: In Section \ref{sec:obs} we present some basic facts about the NIG process and state a technical lemma about solutions of the equation $q = \psi(z)$, which will be important for the remainder of the paper. It is perhaps important to note here, although details and references will be given in Section \ref{sec:obs}, that the NIG process is, in fact, a process with two-sided jumps, infinite jump activity and infinite variation paths, which is widely used for modeling both physical processes as well as economic ones. In Section \ref{sec:ggcem} we review some basic facts about GGCs, MEs and the connection between MEs and the class of L\'{e}vy processes whose L\'{e}vy measures have completely monotone densities. Section \ref{sect:padstiel} reviews the connection between GGCs, MEs and Pad\'{e} Approximants of Stieltjes functions. The main theoretical results are given in Section \ref{sect:main} in which we derive the L\'{e}vy-Khintchine representation of $\varphi_q^{\pm}(z)$ as well as the GGC-like representation \eqref{eq:thebasis}. We show that the distributions of $S_{\eq}$ and $-I_{\eq}$ belong to the class of GGCs when $\tau_q$ is a positive measure and that they do not belong to this class when $\tau_q$ is not positive. We also show that the representation \eqref{eq:thebasis} holds also for $q=0$ in the cases where this makes sense. In Section \ref{sect:techdets} we present an easy method for computing the negative moments of $\tau_q$, which are the basis for the above mentioned Taylor series expansions and Pad\'e Approximants. Finally, in Section \ref{sect:exandapp} we conduct some numerical experiments with our theoretical results and demonstrate convenient applications to the ultimate ruin problem and the pricing of perpetual stock options.\\ \\
\noindent Throughout the paper we will write $\orp$, $\crp$, $\orn$, and $\crn$, where
\begin{align*}
\orp := (0,\infty)\quad\text{ and }\quad \crp := [0,\infty),
\end{align*}
with analogous definitions for $\orn$ and $\crn$. When working with complex or imaginary numbers we will always write $\iota = \sqrt{-1}$ for the imaginary unit. As well as working with the Wiener-Hopf factors directly we will also work with the cumulant generating functions (CGFs) or Laplace exponents 
\begin{align*}
\psi_q^{\pm}(z) := \log\left(\varphi^{\pm}_q(z)\right).
\end{align*}
\section{The NIG process}\label{sec:obs}
The NIG probability distribution was first introduced by Barndorff-Nielsen in \cite{oebn}. NIG distributions form an subclass of the set of normal variance-mean mixtures, the set of generalized hyperbolic (GH) distributions \cite{oebnIII} and the set of ID distributions \cite{oebnII}. Within the class of GH distributions, the NIG distribution is the only distribution that is closed under convolutions; in general, it is a mathematically tractable version of a GH distribution that can be used to approximate the majority of GHs quite well \cite{oebnIII}. In this context it has been used to model turbulence as well as financial data. When the NIG distribution is taken as the basis for a L\'{e}vy process, it has the advantage of an explicitly defined transition density (see e.g. Table 4.5 in \cite{contan}). Additionally, its statistical properties (e.g. semiheavy tails) and the fact that NIG processes have infinite jump activity are a desirable feature when modeling stock market returns \cite{oebnII,active}. NIG processes belong to the popular class of $\mathcal{CM}$ processes, see Section \ref{sec:ggcem}, as well as to the class of regular L\'{e}vy processes of exponential type \cite{levensvet}. \\ \\
Like all L\'evy processes, we can define a NIG process $X$ via its Laplace exponent $\psi(z) := \psi_X(z) := t^{-1}\log(\exp[e^{zX_t}]$, which we will do using the parameterization found in \cite{Cont}, pg. 128\footnote{In many sources, including \cite{oebn, oebnII}, the NIG process is defined via Laplace exponent $\psi(z) = \delta\left(\sqrt{\alpha^2 - \beta^2} - \sqrt{\alpha^2 - (\beta + z^2)}\right) + \mu z$, where $0 \leq \vert \beta \vert < \alpha$, $\delta > 0$, and $\mu \in \mathbb{R}$. In this case, the process can also be defined via subordination except that the subordinator $S$ is an inverse Gaussian process with parameters $\delta$ and $\sqrt{\alpha^2 - \beta^2}$ and the Brownian motion $B$ has drift $\sqrt{\alpha^2 - \beta^2}$ and diffusion coefficient equal to one. It is easy to find a bijection between the sets of parameters $(\theta,\,\sigma,\,\kappa,\,\mu)$ and $(\alpha,\,\beta,\,\delta,\,\mu)$ and so the approaches are equivalent. The one caveat is that in the above mentioned sources the case $\alpha^2 = \beta^2$ is allowed, which would imply $\kappa = \infty$ for the parameter set used in this article. This extreme case is not included here, as it does not fit into our approach. Other authors also exclude the case $\alpha^2 = \beta^2$ in their definifions of the NIG process, see in particular \cite{contan,levensvet}. Note that in this extreme case we leave the class of processes defined by subordinating Brownian motion with a \emph{tempered} stable subordinator; in the extreme case the subordinator becomes a stable process.}, via the subordination of a Brownian motion with drift by an inverse Gaussian subordinator. Consider the subordinator $U$ with L\'{e}vy measure
\begin{align*}
\nu(\text{d}x) = \frac{1}{\sqrt{2\pi\kappa}}\frac{e^{-\frac{x}{2\kappa}}}{x^{^3/2}}\text{d}x,\quad \kappa \in \mathbb{R}^+,
\end{align*}
and note that with this parameterization $\kappa$ is in fact the variance of $U$. The Laplace exponent of the process $U$ is then
\begin{align}\label{eq:sub}
\psi_U(z) := \frac{1}{t}\log\left(\e[e^{zU_t}]\right) = \frac{1}{\kappa} - \frac{1}{\kappa}\sqrt{1 - 2\kappa z},\quad \re(z) < \frac{1}{2\kappa} .
\end{align}
Subordinating the Brownian motion with drift, $B$, with Laplace exponent
\begin{align*}
\psi_B(z) := \frac{1}{t}\log\left(\e[e^{zB_t}]\right) = \theta z + \frac{1}{2}\sigma^2 z^2,\quad z \in \mathbb{C},\,\theta \in \mathbb{R},\, \sigma \in \mathbb{R}^+,
\end{align*}
by the process $U$ gives us the Laplace exponent of a NIG process $X^{(0)}$ without drift, i .e.
\begin{align*}
\psi_{X^{(0)}}(z) := \psi_U(\psi_B(z)) = \frac{1}{\kappa} - \frac{1}{\kappa}\sqrt{1 - 2\kappa\theta z - \kappa\sigma^2 z^2}, \quad \hat{\rho} < \re(z) < \rho,
\end{align*}
where 
\begin{align}\label{eq:rhos}
\rho := \frac{-\theta + \sqrt{\theta^2 + \frac{\sigma^2}{\kappa}}}{\sigma^2},\qquad \hat\rho := \frac{-\theta - \sqrt{\theta^2 + \frac{\sigma^2}{\kappa}}}{\sigma^2},
\end{align}
such that $p(z) = (1 - z/\rho)(1 - z/\hat{\rho}) = 1 - 2\kappa\theta z - \kappa\sigma^2z^2$.
Adding a drift $\mu \in \mathbb{R}$ to $X^{(0)}$ gives a general NIG process $X$ with parameters $(\theta,\,\sigma,\,\kappa,\,\mu)$, which has Laplace exponent
\begin{align}\label{eq:psiX}
\psi_X(z) := \psi_{X^{(0)}}(z) + \mu z =  \frac{1}{\kappa} - \frac{1}{\kappa}\sqrt{1 - 2\kappa\theta z - \kappa\sigma^2 z^2} + \mu z, \quad \hat{\rho} < \re(z) < \rho,
\end{align}
where the funtion on the righthand side of \eqref{eq:psiX} can be extended to an analytic function in the cut complex plane $\mathbb{C} \backslash (-\infty,\hat{\rho}] \cup [\rho,\infty)$. If $\eq$ is an exponential random variable with mean $q^{-1}$, which is independent of $X$, then we have
\begin{align}\label{eq:tofactor}
\e[e^{zX_{\eq}}] = \frac{q}{q-\psi_X(z)} = \frac{q}{q - \frac{1}{\kappa} + \frac{1}{\kappa}\sqrt{1 - 2\kappa\theta z - \kappa\sigma^2 z^2} - \mu z}, 
\end{align}
where the equalities hold on some non-empty, vertical strip in the complex plane containing zero. The righthand side of \eqref{eq:tofactor} is again a well-defined and analytic function of $z$ on $\mathbb{C} \backslash (-\infty,\hat{\rho}] \cup [\rho,\infty)$ except at those points where
\begin{align}\label{eq:main}
\psi_X(z) = q.
\end{align}
It is easy to see that if solutions of \eqref{eq:main} exist, they will have the form
\begin{gather*}
\zeta := \zeta(q) = \frac{-\theta -\mu +\kappa  \mu  q +\sqrt{d}}{\kappa  \mu ^2+\sigma ^2}, \quad\text{ and }\quad \hat\zeta := \hat{\zeta}(q) = \frac{-\theta -\mu +\kappa  \mu  q -\sqrt{d}}{\kappa  \mu ^2+\sigma ^2}, \quad\text{where}\\
d := \theta ^2+\mu ^2-2 \theta  \mu  (q\kappa-1)+q \sigma ^2 (2-q\kappa).
\end{gather*}
In fact, $\zeta$ and $\hat{\zeta}$ are just the solutions of the associated quadratic equation 
\begin{align}\label{eq:assocquad}
p(z) = [r(z)]^2,
\end{align}
where $r(z) := 1 -q\kappa  + \kappa \mu z$. The following technical proposition is important in helping us determine the number, mutiplicity, and location of solutions of \eqref{eq:main} and will be referenced throughout the article. Its proof is straightforward and a little tedious, so we relegate it to Appendix \ref{sec:appendix}.
\begin{proposition}\label{prop:themostannoying}\
\begin{enumerate}[(i)]
\item Equation \eqref{eq:main} has either no solutions, one solution, or two solutions. 
\item $z_0 \in \mathbb{C}$ is a solution of \eqref{eq:main} iff $z_0 \in [\hat{\rho},0)\cup(0,\rho]$,  $z_0 = \zeta$ or $z_0 = \hat{\zeta}$, $d > 0$, and $q - 1/\kappa \leq \mu z_0$. It follows that whenever one of $\zeta$ or $\hat{\zeta}$ is a solution of \eqref{eq:main} $\zeta \neq \hat{\zeta}$.
\item If $\zeta$ (resp. $\hat{\zeta}$) satisfies \eqref{eq:main}, then  $0 < \zeta$ (resp. $\hat{\zeta}< 0$). It follows from (ii) that if $\zeta$ and $\hat{\zeta}$ satisfy \eqref{eq:main}, then $\hat{\rho} \leq \hat{\zeta} < 0 < \zeta \leq \rho$.
\item If $z_0$ satisfies \eqref{eq:main} and $\hat{\rho} < z_0 < \rho$, then $z_0$ is a simple zero of $q - \psi_X(z)$.
\item If $\zeta \in \mathbb{R}$ (resp. $\hat{\zeta} \in \mathbb{R}$) then $\hat{\rho} \leq \zeta \leq \rho$ (resp. $\hat{\rho} \leq \hat{\zeta} \leq \rho$).
\item Neither $\rho = \hat{\zeta}$ nor $\hat{\rho} = \zeta$ is possible.
\item Both $\rho = \zeta$ and $\hat{\rho} = \hat{\zeta}$ iff $\mu = 0$ and $q = 1/\kappa$.
\end{enumerate}
\end{proposition}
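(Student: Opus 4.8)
\emph{Proof proposal.} The plan is to reduce the transcendental equation \eqref{eq:main} to the quadratic \eqref{eq:assocquad} and then keep careful track of which roots of that quadratic actually survive the square-root constraint. The starting point is the identity, read off from \eqref{eq:psiX},
\begin{align*}
q - \psi_X(z) = \tfrac1\kappa\bigl(\sqrt{p(z)} - r(z)\bigr), \qquad r(z) = 1 - q\kappa + \kappa\mu z,
\end{align*}
which after multiplication by the conjugate factor gives $\kappa\bigl(q-\psi_X(z)\bigr)\bigl(\sqrt{p(z)}+r(z)\bigr) = p(z) - [r(z)]^2$. The right-hand side is a genuine quadratic with leading coefficient $-\kappa(\sigma^2+\kappa\mu^2)$, nonzero precisely because $\sigma>0$, and its roots are exactly $\zeta$ and $\hat\zeta$; this already gives (i), and it shows that a root $z_0$ of \eqref{eq:assocquad} solves \eqref{eq:main} if and only if the branch is the right one, i.e. $\sqrt{p(z_0)}=+r(z_0)$. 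At such a root $p(z_0)=[r(z_0)]^2\ge0$; using $\im p(z) = -2\kappa\,\im(z)\bigl(\theta+\sigma^2\re(z)\bigr)$, and disposing of the single exceptional line $\re(z_0)=-\theta/\sigma^2$ by hand (there $\sqrt{p(z_0)}$ is a positive real, while $r(z_0)$ is non-real unless $\mu=0$, a sub-case settled directly), one concludes that every solution is real and lies in $[\hat\rho,\rho]$, and it is nonzero because $\psi_X(0)=0<q$. Since any real root of \eqref{eq:assocquad} automatically satisfies $p(z_0)=[r(z_0)]^2\ge0$ and hence lies in $[\hat\rho,\rho]$, this also proves (v); and the surviving-branch condition $r(z_0)\ge0$ is literally $q-1/\kappa\le\mu z_0$, the last clause of (ii).

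The second tool is convexity. Differentiating \eqref{eq:psiX} and using $p''=-2\kappa\sigma^2<0$ gives $\psi_X''(z)=\frac{(p'(z))^2-2p''(z)p(z)}{4\kappa\,p(z)^{3/2}}>0$ on $(\hat\rho,\rho)$, with $\psi_X'(z)\to\mp\infty$ as $z\to\hat\rho^+$ and $z\to\rho^-$; since $\psi_X(0)=0$, the minimum of $\psi_X$ over $(\hat\rho,\rho)$ is $\le0<q$. For (iv): a solution $z_0\in(\hat\rho,\rho)$ cannot be a zero of $q-\psi_X$ of order $\ge2$, since that would force $\psi_X'(z_0)=0$ and hence $\psi_X(z_0)\le0<q$. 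To finish (ii) I still need $d>0$; if $d=0$ then $\zeta=\hat\zeta$ is a double root of \eqref{eq:assocquad}, but from $p-r^2=\kappa(q-\psi_X)(\sqrt p+r)$ together with the simple-zero fact just proved, a double root in $(\hat\rho,\rho)$ is impossible, while a double root at $\hat\rho$ or $\rho$ is excluded by (vi); combined with $d\ge0$ (needed for $\zeta,\hat\zeta$ to be real, which solutions must be) this yields $d>0$, whence $\zeta-\hat\zeta=2\sqrt d/(\sigma^2+\kappa\mu^2)>0$. For (iii) I would use that the sublevel set $\{z\in[\hat\rho,\rho]:\psi_X(z)\le q\}$ is an interval $[a,b]$ with $a<0<b$, whose endpoints interior to $(\hat\rho,\rho)$ are exactly the solutions there; as these lie in $\{\zeta,\hat\zeta\}$ with $\zeta>\hat\zeta$, it follows that $\zeta>0$ whenever $\zeta$ solves \eqref{eq:main} and $\hat\zeta<0$ whenever $\hat\zeta$ does. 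The one delicate point is a lone solution apparently sitting on the wrong side of $0$: one evaluates $\psi_X$ at the other root through $\sqrt p=-r$ and checks the resulting value contradicts the ordering of $[a,b]$.

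Claims (vi) and (vii) are then pure algebra on $p$, $r$, and $g:=p-r^2$. If $\hat\zeta=\rho$ then $0=p(\rho)=[r(\rho)]^2$, so $r(\rho)=0$; since $g$ has negative leading coefficient it is strictly positive between its two roots, which forces $\zeta>\rho$ and contradicts (v) (a degenerate double root at $\rho$ is ruled out by comparing $g(\hat\rho)$ with $-[r(\hat\rho)]^2$, which would force $\sigma=0$), and the case $\hat\rho=\zeta$ is symmetric. For (vii): $\rho=\zeta$ together with $\hat\rho=\hat\zeta$ means $g$ and $p$ share both roots, so $g=cp$ for a constant $c$, i.e. $[r(z)]^2=(1-c)p(z)$; a perfect square of a polynomial cannot be a nonzero multiple of $p$, which has the distinct roots $\hat\rho\neq\rho$, so $c=1$, $r\equiv0$, and hence $\mu=0$, $q=1/\kappa$; the converse is a one-line check that for $\mu=0$, $q=1/\kappa$ the quadratic \eqref{eq:assocquad} collapses to $p(z)=0$ with the signs arranging $\zeta=\rho$, $\hat\zeta=\hat\rho$.

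I expect the main obstacle to be the sign bookkeeping underlying (ii) and (iii): separating genuine solutions from spurious roots of \eqref{eq:assocquad}, ruling out non-real solutions and the $d=0$ degeneracy, and deciding which side of $0$ a single solution lies on. None of this is deep, but it calls for a disciplined case split on the sign of $r$ at each root combined with the convexity and sublevel-set picture, and one must respect the dependency order---establish (v) and (vi) before completing (ii) and (iii)---to avoid circularity.
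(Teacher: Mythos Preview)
Your strategy is genuinely different from the paper's and, for most of the claims, cleaner. The paper proves (ii), (iii) and the inequality $d>0$ by an exhaustive six-case split on the signs of $\mu$ and $\theta$ (its Lemmas in the appendix), while you organize everything around strict convexity of $\psi_X$ on $(\hat\rho,\rho)$. Your proof of (iv) via ``double zero $\Rightarrow$ critical point $\Rightarrow$ $\psi_X(z_0)\le\psi_X(0)=0<q$'' is shorter than the paper's conjugate-factor argument (which in the paper already relies on $d>0$), and your sublevel-set picture for (iii) is the right idea: the convex function with $\psi_X(0)=0<q$ has at most one solution on each side of $0$, so when both $\zeta>\hat\zeta$ solve \eqref{eq:main} their signs are forced. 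For a single solution your sketch can be completed cleanly by a sign-of-$r$ argument: if $\zeta$ solves \eqref{eq:main} but $\hat\zeta$ does not, then $r(\zeta)\ge0$ and $r(\hat\zeta)<0$; supposing $\zeta<0$ forces $\psi_X<q$ on all of $(0,\rho]$, hence $r(\rho)<0$, and now the linearity of $r$ together with $\hat\zeta<\zeta<\rho$ and the three signs $r(\hat\zeta)<0$, $r(\zeta)\ge0$, $r(\rho)<0$ is impossible for any sign of $\mu$. Your treatments of (v), (vi), (vii) are essentially the paper's algebra.

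There is, however, a real gap in your argument that solutions are real (equivalently, that $d>0$ once a solution exists). You write ``at such a root $p(z_0)=[r(z_0)]^2\ge0$'', but for complex $z_0$ the number $[r(z_0)]^2$ need not be a nonnegative real, so nothing follows. Your $\im p(z)$ computation only locates the line on which $p$ is real; the relevant constraint is $\im p(z_0)=\im\,[r(z_0)]^2$, which is automatically satisfied by any root of the quadratic and therefore gives no obstruction. To close this you must actually rule out the case $d<0$. One route that stays close to your convexity viewpoint: if $d<0$ then $g:=p-r^2$ has no real zeros and negative leading coefficient, so $g<0$ on $\mathbb{R}$; evaluating at $0$ gives $(1-q\kappa)^2>1$, i.e. $q\kappa>2$. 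Computing $\re r(\zeta)=\bigl(\sigma^2(1-q\kappa)-\kappa\mu\theta\bigr)/(\kappa\mu^2+\sigma^2)$ and combining the inequality $\sigma^2(q\kappa-1)\le\kappa|\mu\theta|$ (needed for $\re r(\zeta)\ge0$) with the inequality $q\sigma^2(q\kappa-2)>(\theta+\mu)^2+2|\mu\theta|q\kappa$ (equivalent to $d<0$ once one rewrites $d=(\theta+\mu)^2-2\theta\mu q\kappa+q\sigma^2(2-q\kappa)$) yields $q\kappa|\mu\theta|>2q\kappa|\mu\theta|$, a contradiction; hence $d<0$ forces $\re r(\zeta)<0$ and neither complex root can satisfy $\sqrt{p}=r$. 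This is still an algebraic case check, but it is short and fits your framework; without it the proof of (ii) is incomplete. Also note a small dependency issue: you invoke (vi) inside the proof that $d>0$, but your proof of (vi) uses (v), which is fine, and not (ii), so there is no circularity---just make the order explicit.
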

\section{Generalized gamma convolutions, exponential mixtures, and the $\mathcal{CM}$ class of processes}\label{sec:ggcem}
In this brief section we review some facts about generalized gamma convolutions, exponential mixtures, and L\'{e}vy processes whose jumps are determined by L\'{e}vy measures with completely monotone densities. We denote this latter group of processes by $\mathcal{CM}$. The content in this section is taken primarily from Bondesson \cite{Bondesson} and Rogers \cite{Rogers}. Going forward we will write $\Gamma(\alpha,\beta)$ for the Gamma distribution with density
\begin{align*}
f(x) = \frac{\beta^{\alpha}}{\Gamma(\alpha)}x^{\alpha - 1}e^{-\beta x},\quad x > 0.
\end{align*}
\begin{definition}\label{def:ggc}
A \emph{generalized gamma convolution} is a probability distribution on $\bar{\mathbb{R}}^+$ with MGF
\begin{align*}
\varphi(z) = \exp\left(az + \int_{\orp}\log\left(\frac{u}{u-z}\right)\tau(\text{d}u)\right), \quad \re(z)\leq 0,
\end{align*}
where $a \geq 0$ and $\tau$ is a radon measure on $\mathbb{R}^+$ satisfying
\begin{align*}
\int_{(0,1]}\vert\log(u)\vert \tau(\text{d}u) < \infty,\quad\text{ and }\quad \int_{(1,\infty)}\frac{\tau(\text{d}u)}{u} < \infty.
\end{align*}
\end{definition}
\noindent The measure $\tau$ is referred to as the \emph{Thorin measure} and the name generalized gamma convolution is easy to justify given that a convolution of a finite number of independent gamma distributions is a special case of a GGC with a Thorin measure that has finite support. The inclusion of the constant $a$ in the definition owes to the fact that the distribution $\Gamma(\beta,\beta/a)$ converges weakly to the degenerate distribution at the point $a$ as $\beta \rightarrow \infty$. For our purposes it is important to note that: a) an arbitrary GGC is the weak limit of a sequence of convolutions of finite numbers of independent gamma distributions (see Theorem 3.1.5 in \cite{Bondesson}); and b) that GGCs are ID distributions such that the following relationship holds.
\begin{theorem}[Theorem 3.1.1 in \cite{Bondesson}]\label{theo:bondy}
A probability distribution on $\crp$ is a GGC iff it is an ID distribution whose L\'{e}vy measure has a density $\pi(x),\, x > 0$, such that $x\pi(x)$ is a completely monotone function. In this case, we have the following relationship between the L\'{e}vy density and the Thorin measure
\begin{align*}
\pi(x) = \frac{1}{x}\int_{\orp}e^{-xu}\tau(\text{d}u).
\end{align*}
\end{theorem}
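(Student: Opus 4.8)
The plan is to prove the two implications of the ``iff'' separately, using the explicit identity $\pi(x) = x^{-1}\int_{\orp}e^{-xu}\tau(\d u)$ as the bridge in both directions. The single analytic fact that does all the work is the Frullani--Malmsten type formula
\begin{align*}
\log\left(\frac{u}{u-z}\right) = \int_{\orp}\bigl(e^{zx}-1\bigr)\frac{e^{-ux}}{x}\,\d x, \qquad \re(z)\le 0 < u,
\end{align*}
which one verifies by differentiating both sides in $z$ (the right-hand side becoming $\int_{\orp}e^{(z-u)x}\,\d x = (u-z)^{-1}$) and matching the values $0$ at $z=0$; the branch of $\log$ is unambiguous since $u-z$ stays in the right half-plane for $\re(z)\le 0$.

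\emph{GGC $\Rightarrow$ ID with $x\pi(x)$ completely monotone.} Starting from the defining representation of the MGF, I would insert the Frullani formula into the Thorin integral and interchange the order of integration, obtaining $\log\varphi(z) = az + \int_{\orp}(e^{zx}-1)\pi(x)\,\d x$ with $\pi(x) = x^{-1}\int_{\orp}e^{-ux}\,\tau(\d u)$. This is precisely the Laplace exponent of an ID law on $\crp$ (a subordinator law) with drift $a$ and L\'evy density $\pi$, \emph{provided} $\pi$ is a genuine L\'evy density, i.e.\ $\int_{\orp}\min(1,x)\pi(x)\,\d x<\infty$. By Tonelli this integral equals $\int_{\orp}\bigl[(1-e^{-u})/u + E_1(u)\bigr]\tau(\d u)$, where $E_1$ denotes the exponential integral; since $E_1(u)\sim|\log u|$ as $u\downarrow 0$ and $(1-e^{-u})/u + E_1(u)\sim 1/u$ as $u\to\infty$, this is finite exactly because of the two Thorin integrability conditions on $\tau$. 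Finally $x\pi(x) = \int_{\orp}e^{-ux}\,\tau(\d u)$ is the Laplace transform of a positive measure, hence completely monotone by Bernstein's theorem.

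\emph{ID with $x\pi(x)$ completely monotone $\Rightarrow$ GGC.} An ID distribution on $\crp$ is the law of a subordinator at time one, so its MGF is $\exp\bigl(az + \int_{\orp}(e^{zx}-1)\pi(x)\,\d x\bigr)$ for some drift $a\ge 0$ and L\'evy density $\pi$ with $\int_{\orp}\min(1,x)\pi(x)\,\d x<\infty$. Complete monotonicity of $x\pi(x)$ and the Bernstein--Widder theorem give $x\pi(x) = c + \int_{\orp}e^{-ux}\,\tau(\d u)$ for a constant $c\ge 0$ and a positive Radon measure $\tau$ on $\orp$; the requirement $\int_{(1,\infty)}\pi(x)\,\d x<\infty$ forces $c=0$, and the full integrability of $\pi$ forces the two Thorin conditions on $\tau$ (reversing the $E_1$-computation above). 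Running the first part in reverse --- Tonelli, then the Frullani identity --- turns the Laplace exponent back into the GGC form, with the same $a$ and the $\tau$ just produced as Thorin measure.

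\emph{Main obstacle.} The delicate points are all analytic rather than conceptual: (a) justifying the Fubini/Tonelli interchanges, for which one needs $|e^{zx}-1|\,e^{-ux}/x$ to be jointly integrable --- using $|e^{zx}-1|\le\min(2,|z|x)$ on $\re(z)\le 0$ to control $x\downarrow 0$ and the $\tau$-conditions to control the $u$-tails --- together with validity of the Frullani identity up to the boundary $\re(z)=0$ by continuity; and (b) the bookkeeping that matches the \emph{single} L\'evy-measure condition $\int_{\orp}\min(1,x)\pi(x)\,\d x<\infty$ with the \emph{two} Thorin conditions, carried out via the $E_1$-asymptotics. The degenerate case $\pi\equiv 0$ (equivalently $\tau=0$, the distribution a point mass at $a$) and the observation that a nonzero atom of the Bernstein measure at $u=0$ is incompatible with $\pi$ being a L\'evy density should be dispatched explicitly.
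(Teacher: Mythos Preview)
The paper does not prove this theorem; it is quoted verbatim from Bondesson's monograph \cite{Bondesson} as background material and is used only as a black box (in Corollary~\ref{cor:ggc}). There is therefore no ``paper's own proof'' to compare against.

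That said, your argument is correct and is essentially the standard one (and matches in spirit what Bondesson does). The Frullani--Malmsten identity is indeed the analytic pivot; the Fubini interchange is legitimate under the Thorin conditions via the bound $|e^{zx}-1|\le\min(2,|z|x)$ you indicate; Bernstein--Widder gives the converse representation; and the bookkeeping that ties the single L\'evy condition $\int_{\orp}\min(1,x)\pi(x)\,\d x<\infty$ to the two Thorin conditions through the behaviour of $E_1(u)$ near $0$ and of $(1-e^{-u})/u$ near $\infty$ is the right way to close both directions. Your handling of the atom at $u=0$ in the Bernstein measure (forcing $c=0$ from $\int_1^\infty \pi(x)\,\d x<\infty$) is also correct. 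Nothing is missing.
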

\noindent We recall that a \emph{completely monotone function} $f(x)$, defined for $x > 0$, is a smooth function that satisfies
\begin{align*}
(-1)^nf^{(n)}(x) \geq 0,\quad n \in \mathbb{N} \cup \{0\},
\end{align*}
and that by Bernstein's theorem, every completely monotone function has a representation of the form $f(x) = \int_{\crp}e^{-xu}\mu(\text{d}u)$ for a measure $\mu$ on $\crp$. 
\begin{definition}\label{def:compmon}
A process L\'e{v}y process $X \in \mathcal{CM}$ if its L\'{e}vy measure $\Pi$ has the form
\begin{align*}
\Pi(\text{d}x) = \left(\ind(x > 0)\int_{\mathbb{R}^+}e^{-xu}\mu^+(\text{d}u) + \ind(x < 0)\int_{\mathbb{R}^+}e^{xu}\mu^-(\text{d}u)\right)\text{d}x,
\end{align*}
for measures $\mu^+$ and $\mu^-$, which satisfy
\begin{align*}
\int_{\mathbb{R}^+}\frac{1}{u(1 + u)^2}(\mu^+ + \mu^-)(\text{d}u) < \infty.
\end{align*}
\end{definition}
\noindent That every NIG process belongs to $\mathcal{CM}$ is perhaps not obvious from the discussion thus far, however, it is clear from the L\'{e}vy-Khintchine representation of the Laplace exponent, which was first derived by Barndorff-Nielsen in \cite{oebn} and then again more directly in \cite{oebnII}.
\begin{definition}\label{def:mixture}
A probability distribution on $\crp$ is a \emph{mixture of exponentials} if its MGF has the form
\begin{align}\label{eq:stiel}
\varphi(z) = \int_{(0,\infty]}\frac{u}{u-z}\mu(\text{d}u), \quad \re(z) \leq 0,
\end{align}
where $\mu$ is a probabilty distribution on $(0,\infty]$. A \emph{finite mixture of exponentials} results when $\mu$ has finite support.
\end{definition}
\noindent Note that while all non-degenerate GGCs are absolutely continuous with respect to the Lebesgue measure, this is not the case for MEs, since the measures $\mu$ can have an atom at $\infty$. In this case an ME will have an atom at zero and a density with respect to the Lebesgue measure on $(0,\infty)$ (see also discussion on pg. 25 and 30 in \cite{Bondesson}). \\ \\
\noindent What ties MEs and the $\mathcal{CM}$ class of processes together is the following important theorem due to Rogers \cite{Rogers}.
\begin{theorem}[Theorem 2 in \cite{Rogers}]\label{theo:rog}\ \\
\begin{enumerate}[(i)]
\item If $X \in \mathcal{CM}$, then the distributions of $S_{\eq}$ and $-I_{\eq}$ are MEs for each $q > 0$.
\item  If the distributions of $S_{\eq}$ and $-I_{\eq}$ are MEs for some $q > 0$, then $X \in \mathcal{CM}$.
\end{enumerate}
\end{theorem}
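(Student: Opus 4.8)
The two items are the two halves of one equivalence, so I would organise everything around a single reduction. A law on $\crp$ with Laplace transform $\varphi(-y)$, $y>0$, is a mixture of exponentials exactly when $y\mapsto\varphi(-y)$ is a \emph{Stieltjes function} on $(0,\infty)$: writing $\varphi(z)=\int_{(0,\infty]}u/(u-z)\,\mu(\d u)$ and putting $z=-y$ gives $\varphi(-y)=\mu(\{\infty\})+\int_{(0,\infty)}u\,\mu(\d u)/(u+y)$, a nonnegative constant plus a Stieltjes transform; conversely any Stieltjes function with value $1$ at $y=0^+$ is of this form and recovers $\mu$. Equivalently, by Bernstein's theorem, the law is an ME iff it has a completely monotone density on $(0,\infty)$ together with a possible atom at $0$. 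Thus both parts reduce to: \emph{$X\in\mathcal{CM}$ iff $y\mapsto\varphi_q^{+}(-y)$ and $y\mapsto\varphi_q^{-}(y)$ are Stieltjes functions.} To connect these factors with $X$ I would use the ladder form of the Wiener--Hopf factorisation (see e.g.\ \cite{Kyprianou}, Ch.~6): $\varphi_q^{+}(z)=\kappa(q,0)/\kappa(q,-z)$, where $\kappa(\alpha,\beta)$ is the exponent of the bivariate ascending ladder process $(L^{-1},H)$, and symmetrically $\varphi_q^{-}(z)=\hat\kappa(q,0)/\hat\kappa(q,z)$ for the descending ladder process $\widehat H$; here $\kappa(q,0),\hat\kappa(q,0)>0$ since $q>0$. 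Hence $\varphi_q^{+}(-y)=\kappa(q,0)/\kappa(q,y)$ is Stieltjes iff $\kappa(q,\cdot)$ is a \emph{complete Bernstein function}, i.e.\ iff the ladder-height subordinator $H$ has a completely monotone L\'evy density (reciprocals of nonzero complete Bernstein functions are Stieltjes, and conversely). So the whole theorem comes down to: the L\'evy measure $\Pi$ of $X$ has completely monotone densities on $(0,\infty)$ and on $(-\infty,0)$ iff $H$ and $\widehat H$ do.

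For part (i) I would invoke Vigon's \emph{\'equation amicale}, which expresses the L\'evy measure $\nu_H$ of $H$ as a one-sided convolution of the positive L\'evy density $\pi^{+}$ of $X$ against a nonnegative kernel built from the renewal measure of $\widehat H$; since $x\mapsto\int_{[0,\infty)}\pi^{+}(x+y)\,m(\d y)$ is completely monotone for every completely monotone $\pi^{+}$ and every measure $m\ge0$ (differentiate under the integral and use $(-1)^{n}(\pi^{+})^{(n)}\ge0$), $\nu_H$ inherits complete monotonicity, and likewise $\nu_{\widehat H}$. Then $\kappa(q,\cdot)$ and $\hat\kappa(q,\cdot)$ are complete Bernstein, both Wiener--Hopf factors are Stieltjes, and $S_{\eq}$, $-I_{\eq}$ are MEs for every $q>0$. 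An alternative, more hands-on route to (i): approximate $X\in\mathcal{CM}$ by jump--diffusions whose jump densities are finite mixtures of exponentials, for which $S_{\eq}$ and $-I_{\eq}$ are explicitly hyperexponential (so trivially MEs), and pass to the limit using that the Stieltjes class is closed under pointwise limits and that the corresponding Laplace exponents, hence the Wiener--Hopf factors, converge.

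For part (ii) the hypothesis, run through the reduction, says $\kappa(q,\cdot)$ and $\hat\kappa(q,\cdot)$ are complete Bernstein functions for the given $q$, so $H$ and $\widehat H$ have completely monotone L\'evy densities; I would then recover $\psi$ from the identity $q-\psi(z)=\kappa(q,-z)\hat\kappa(q,z)$ (up to the normalising constant) and read off the positive and negative L\'evy densities of $X$ as the jumps of $\psi$ across its two real branch cuts, whose complete Bernstein structure forces these boundary densities to be completely monotone; a check that no spurious drift or Gaussian component is introduced then gives $X\in\mathcal{CM}$. The step I expect to be the main obstacle, in both directions, is the passage through the ladder-height subordinators: establishing that complete monotonicity of a one-sided L\'evy density is transferred, both ways, by Vigon's \'equation amicale, together with the bookkeeping for the possible atom of the mixing measure at $\infty$ (equivalently the atom of $S_{\eq}$ at $0$, present exactly when $0$ is irregular for $[0,\infty)$) and for the drift coefficient of $H$. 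The remaining manipulations are formal properties of Stieltjes and (complete) Bernstein functions.
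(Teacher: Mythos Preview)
The paper does not prove this theorem; it is quoted as Theorem~2 of Rogers \cite{Rogers} and used as an imported result, so there is no in-paper argument to compare against. Your sketch is a reasonable modern outline of how one might prove it, though it differs from Rogers' original 1983 argument, which predates Vigon's \'equation amicale and works instead through the $q$-resolvent density: Rogers shows that for $X\in\mathcal{CM}$ the resolvent density is completely monotone on each half-line, factors it via the Wiener--Hopf identity at the resolvent level, and reads off the ME structure of $S_{\eq}$ and $-I_{\eq}$ from there. Your recasting through complete Bernstein/Stieltjes functions and ladder subordinators is the cleaner contemporary language.

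The one place your sketch has a genuine gap is part~(ii). The plan to ``read off the positive and negative L\'evy densities of $X$ as the jumps of $\psi$ across its two real branch cuts'' presupposes analytic structure that a generic L\'evy process need not have: the Laplace exponent need not extend off $\iota\mathbb{R}$ at all, and certainly need not possess branch cuts of the NIG type. A correct route, staying within your framework, is to expand the product $\kappa(q,-z)\hat\kappa(q,z)$ in L\'evy--Khintchine form (both factors being complete Bernstein, their L\'evy measures have completely monotone densities) and identify the L\'evy measure of $X$ termwise, or equivalently run Vigon's \'equation amicale inversement to recover $\Pi$ from $\nu_H$, $\nu_{\widehat H}$ and the renewal measures; the complete monotonicity then transfers back. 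Rogers handles~(ii) by reconstructing the resolvent from the two ME factors and deducing complete monotonicity of the L\'evy density from that of the resolvent density.
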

\section{Pad\'{e} approximants of Stieltjes functions and the connection to GCCs and MEs}\label{sect:padstiel}
This section is the companion to Section \ref{sec:ggcem} in the sense that we present a very natural and elegant way to approximate general GGCs and MEs by gamma convolutions and finite MEs respectively. The technique in both cases relies on Stieltjes functions and their Pad\'{e} approximants. 
\begin{definition}
A \emph{Stieltes function} $f(z)$ is defined by the Stieltjes-integral representation,
\begin{align*}
f(z) := \int_{\crp} \frac{\mu(\text{d}u)}{1 + zu},\quad z \in \mathbb{C}\backslash(-\infty,0],
\end{align*}
where $\mu$ is a positive measure on $\crp$ with infinite support and finite moments
\begin{align*}
m_k := \int_{\crp}u^k \mu(\text{d}u).
\end{align*}
\end{definition}
\noindent Formally, we may also express $f(z)$ as a \emph{Stieltjes series}, which may converge only at $0$ and has the following form:
\beq\label{eq:stielser}
f(z) = \sum_{k=0}^{\infty} m_k(-z)^k.
\eeq
It is easy to see that the above series converges for $|z|<R$ if and only if the support of $\mu$ lies in $[0,R^{-1}]$. In this case we will call $f(z)$ a \emph{Stieltjes function (or a Stieltjes series) with the radius of convergence} $R$. For such functions, the domain of definition extends to all $z \in \mathbb{C}\backslash(-\infty,-R]$.\\ \\
\noindent For the following, we assume $f(z)$ is a function (not necessarily a Stieltjes function) with a power series representation $f(z) = \sum_{k=0}^{\infty}c_kz^k$ at zero. 
\begin{definition}
If there exist polynomials $P_m(z)$ and $Q_n(z)$ satisfying $\deg(P_m)\le m$, $\deg(Q_n)\le n$, $Q_n(0) = 1$ and
\begin{align*}
\frac{P_m(z)}{Q_n(z)} = f(z) + O(z^{m+n+1}), \;\;\; z\to 0,
\end{align*}
then we say that $f^{[m/n]}(z) := P_m(z)/Q_n(z)$ is the \emph{$[m/n]$ Pad\'{e} approximant} of the function $f(z)$ (at zero).
\end{definition}
\noindent The connection between Stieltjes functions and Pad\'{e} approximants is nicely summarized in the following theorem due to Baker \cite{Baker}.
\begin{theorem}[Corollary 5.1.1, and Theorems 5.2.1, 5.4.4 in \cite{Baker}]\label{theo:exist} If $f(z)$ is a Stieltjes function with radius of convergence $R > 0$, then $f^{[n + k/n]}(z)$ exists provided $k \geq -1$ and $n \geq 2$. The approximant $f^{[n+k/n]}(z)$ has simple poles in $(-\infty,-R]$, which have positive residues. Further, on any compact subset $S \subset z \in \mathbb{C}\backslash(-\infty,-R]$ 
\begin{align*}
\vert f(z) - f^{[n+k/n]}(z)\vert < c_1e^{-c_2 n},
\end{align*}
where $c_1 := c_1(k,S)$ and $c_2 := c_2(S)$ are both greater than zero.
\end{theorem}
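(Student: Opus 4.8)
The statement is a classical package of facts about Padé approximants of Stieltjes series, and the plan is to assemble it from three ingredients: the positive-definiteness of the Hankel forms built from the moments $m_k$; the identification of the Padé denominators $Q_n$ with (reversed) orthogonal polynomials of the measure $\mu$; and a conformal-map estimate for the growth of those polynomials off $[0,R^{-1}]$. Throughout one uses that $\mathrm{supp}(\mu)\subseteq[0,R^{-1}]$ — which is precisely the radius-of-convergence hypothesis — and that $\mu$ has infinite support, so that every Hankel determinant below is strictly positive.

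\emph{Existence and structure.} The $[m/n]$ approximant of a power series exists and is unique as soon as a certain Hankel determinant in its coefficients is non-vanishing; for the series \eqref{eq:stielser} these determinants coincide, up to sign, with $\det(m_{i+j})_{0\le i,j\le\ell}$ and $\det(m_{i+j+1})_{0\le i,j\le\ell}$, which are strictly positive because $(m_k)$ is a Stieltjes moment sequence with infinitely many points of increase. This gives existence of $f^{[n+k/n]}$ for $k\ge-1$ and $n\ge 2$ — the restriction $k\ge-1$ being exactly the range in which the relevant determinants are of this Hankel type, and $n\ge 2$ a mild technical floor. Writing $Q_n(z)=\prod_{j=1}^{n}(1+zx_j^{(n)})$, the linear system defining $Q_n$ identifies $u\mapsto\prod_j(u-x_j^{(n)})$, up to a constant, with the monic degree-$n$ orthogonal polynomial $\pi_n$ of $\mu$; by the classical theory of orthogonal polynomials the $x_j^{(n)}$ are then real, simple, and lie in the open convex hull of $\mathrm{supp}(\mu)$, hence in $(0,R^{-1}]$, so the poles $-1/x_j^{(n)}$ of $f^{[n+k/n]}$ are simple and lie in $(-\infty,-R]$. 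For the sign of the residues I would pass to the partial-fraction form $f^{[n+k/n]}(z)=\Pi_k(z)+\sum_j w_j^{(n)}\bigl(1+zx_j^{(n)}\bigr)^{-1}$, with $\Pi_k$ a polynomial of degree at most $k$ (absent, in the Stieltjes-appropriate sense, when $k\le 0$), and identify the $w_j^{(n)}$ with Gauss–Radau quadrature weights of $\mu$, which are strictly positive; since $1+zx_j^{(n)}=x_j^{(n)}\bigl(z+1/x_j^{(n)}\bigr)$ with $x_j^{(n)}>0$, the residue at $-1/x_j^{(n)}$ equals $w_j^{(n)}/x_j^{(n)}>0$. The case $k\ge 1$ reduces to $k\in\{-1,0\}$ by peeling off the first $k+1$ moments, i.e.\ by relating $f^{[n+k/n]}$ to a near-diagonal approximant of a modified Stieltjes function.

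\emph{The error bound.} This is the analytic core. Along the staircase, $f^{[n-1/n]}$ and $f^{[n/n]}$ are the convergents of the $S$-fraction attached to \eqref{eq:stielser}, whose partial numerators are all positive (Stieltjes' theorem); for real $x\in(-R,\infty)$ the positivity of $\mu$ then forces the even- and odd-order convergents to be monotone and mutually bracketing, so $|f(x)-f^{[N/\cdot]}(x)|$ is bounded by the gap between two consecutive convergents, which is an explicit monomial in $x$ divided by $Q_{N+1}(x)Q_N(x)$. Estimating this gap — for which the extremal case is the Chebyshev-type Stieltjes function $(1+z/R)^{-1/2}$, i.e.\ the arcsine (equilibrium) measure on $[0,R^{-1}]$, whose orthogonal polynomials are the flattest monic polynomials on the interval — bounds it by $c(x)\,\theta(x)^{2n}$, where $\theta(z):=\left|\frac{\sqrt{1+z/R}-1}{\sqrt{1+z/R}+1}\right|$ is the modulus of the conformal map of $\mathbb{C}\setminus(-\infty,-R]$ onto the unit disc fixing $0$. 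Since $f-f^{[n+k/n]}$ is holomorphic on $\mathbb{C}\setminus(-\infty,-R]$, a subharmonicity (or Vitali) argument upgrades the rate from the real axis to that whole domain, and compactness gives $\max_{z\in S}\theta(z)=e^{-c_2}<1$; hence $|f(z)-f^{[n+k/n]}(z)|\le c_1 e^{-c_2 n}$ with $c_1=c_1(k,S)$ and $c_2=c_2(S)$, independent of $\mu$.

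\emph{Main obstacle.} The only step that is more than bookkeeping is the error estimate, and within it the delicate point is extracting a rate that is \emph{uniform over all Stieltjes measures $\mu$ with the prescribed radius} — which the statement demands, since $c_1$ and $c_2$ depend only on $k$ and $S$. This is exactly where one must either compare against the Chebyshev/equilibrium worst case via the bracketing structure, or, equivalently, invoke the Green's function of $\mathbb{C}\setminus(-\infty,-R]$; and one must take care near the part of $\partial S$ closest to the cut, where $\theta\to 1$ and the rate degrades. The remaining ingredients — positivity of the Hankel forms, reality and localisation of the zeros of $\pi_n$, positivity of the Christoffel weights, and the reduction of general $k$ to $k\in\{-1,0\}$ — are entirely standard, if tedious.
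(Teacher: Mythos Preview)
The paper does not prove this theorem: it is stated as a quotation from Baker's monograph (Corollary~5.1.1 and Theorems~5.2.1, 5.4.4 there), with no accompanying argument. There is therefore no ``paper's own proof'' to compare your proposal against.

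That said, your sketch is a faithful outline of the standard route to these results. The existence and pole-structure part via Hankel positivity, identification of $Q_n$ with reversed orthogonal polynomials, and reading the residues as Christoffel/Gauss weights is exactly the classical argument and is correct as stated. For the error estimate, the continued-fraction (S-fraction) bracketing on the real axis together with the conformal map $z\mapsto(\sqrt{1+z/R}-1)/(\sqrt{1+z/R}+1)$ of $\mathbb{C}\setminus(-\infty,-R]$ onto the disc is the right mechanism; the extremal role of the equilibrium (arcsine) measure is how one obtains constants depending only on $S$ and $k$ and not on $\mu$, which you correctly flag as the only genuinely nontrivial step. One small caution: the passage from real-axis bounds to the full cut plane is usually done in Baker via explicit determinantal identities and the Christoffel--Darboux kernel rather than a bare subharmonicity/Vitali argument, and your ``peeling off moments'' reduction of general $k\ge 1$ to $k\in\{-1,0\}$ glosses over the bookkeeping that the shifted series is again Stieltjes with the same support --- true, but worth stating. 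None of this is a gap; your proposal is a sound condensation of the cited material.
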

\noindent To make the connection with GGCs, let us assume $\varphi(z)$ is the MGF of a GGC with corresponding random variable $Y$ and Thorin measure $\tau$ with infinite support. We further assume that $\varphi(z)$, and therefore also $\psi(z) := \log(\varphi(z))$, is analytic at zero such that $\psi(z)$ has radius of convergence $R$. For simplicity we assume the constant $a$ in Definition \ref{def:ggc} is zero. Further, from here on, we will denote the pushforward measure of $\mu$ under the transformation $x \mapsto x^{-1}$ by $^*\mu$.
\begin{lemma}\label{lem:ggcstiel}
The function $\psi'(-z)$ is a Stieltjes function with radius of convergence $R$, in particular $\psi'(z)$ has the following analytic continuation to the cut complex plane
\begin{align*}
\psi'(z) = \int_{(0,R^{-1}]}\frac{u^*\tau(\text{d}u)}{1-zu},\quad z \in \mathbb{C}\backslash[R,\infty).
\end{align*}

\end{lemma}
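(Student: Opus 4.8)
The plan is to start from the GGC representation of $\varphi(z)=\exp\bigl(\int_{\orp}\log\bigl(\frac{u}{u-z}\bigr)\tau(\d u)\bigr)$ (with $a=0$) and differentiate $\psi(z)=\log\varphi(z)=\int_{\orp}\log\bigl(\frac{u}{u-z}\bigr)\tau(\d u)$ under the integral sign. Formally,
\[
\frac{\partial}{\partial z}\log\left(\frac{u}{u-z}\right)=\frac{1}{u-z},
\]
so one expects $\psi'(z)=\int_{\orp}\frac{\tau(\d u)}{u-z}$. Rewriting $\frac{1}{u-z}=\frac{1/u}{1-z/u}$ and pushing $\tau$ forward under $u\mapsto u^{-1}$ converts this to $\int\frac{v\,{}^{*}\tau(\d v)}{1-zv}$, which is exactly the claimed Stieltjes form (with $z$ replaced by $-z$ it becomes $\int\frac{v\,{}^{*}\tau(\d v)}{1+zv}$, a genuine Stieltjes integral against the positive measure $v\,{}^{*}\tau(\d v)$). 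The first step, then, is to justify this differentiation rigorously: I would fix a compact subset of the strip $\{\re(z)<0\}$ (or more generally of $\mathbb{C}\setminus[R,\infty)$ after continuation), bound $\bigl|\frac{1}{u-z}\bigr|$ uniformly in $u$ on the support of $\tau$, and invoke dominated convergence / the standard theorem on differentiating under the integral, using the two integrability conditions on $\tau$ from Definition \ref{def:ggc} near $0$ and near $\infty$ together with the fact that $z$ stays bounded away from the support.

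Next I would pin down the radius of convergence. Because $\psi$ is assumed analytic at $0$ with radius of convergence exactly $R$, and $\psi$ continues analytically to $\mathbb{C}\setminus[R,\infty)$ precisely because the singularities of $\int\frac{\tau(\d u)}{u-z}$ sit on the support of $\tau$, the support of $\tau$ must be contained in $[R,\infty)$ (otherwise $\psi$ would have a singularity inside the disk of radius $R$, or conversely would extend past $R$, contradicting the stated radius). Equivalently, the support of $^{*}\tau$ — and hence of $v\,{}^{*}\tau(\d v)$ — lies in $(0,R^{-1}]$. This is what lets me write the integral as $\int_{(0,R^{-1}]}$ and matches the criterion, quoted just before the lemma, that a Stieltjes series has radius of convergence $R$ iff the support of its representing measure lies in $[0,R^{-1}]$. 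I should also record that the measure $v\,{}^{*}\tau(\d v)$ has infinite support (inherited from the assumption that $\tau$ has infinite support) and finite moments $m_k=\int v^{k+1}\,{}^{*}\tau(\d v)=\int u^{-(k+1)}\tau(\d u)$, finiteness following from compactness of the support in $(0,R^{-1}]$; these are exactly the requirements in the definition of a Stieltjes function.

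Finally I would assemble the pieces: set $\mu(\d v):=v\,{}^{*}\tau(\d v)$, a positive measure on $(0,R^{-1}]$ with infinite support and finite moments, so that $g(z):=\int\frac{\mu(\d v)}{1+zv}$ is by definition a Stieltjes function, and observe $g(z)=\psi'(-z)$ from the computation above; its radius of convergence is $R$ since $\mathrm{supp}(\mu)\subset(0,R^{-1}]$. The analytic-continuation statement $\psi'(z)=\int_{(0,R^{-1}]}\frac{u\,{}^{*}\tau(\d u)}{1-zu}$ on $\mathbb{C}\setminus[R,\infty)$ then follows because both sides are analytic there and agree on a neighborhood of $0$. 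I expect the only real subtlety to be the justification of differentiating under the integral uniformly near the boundary of the domain — in particular handling the behavior of $\tau$ near $0$ (where $|\log u|$ is integrable but $\tau$ itself need not be finite) and near $\infty$ (where only $\int u^{-1}\tau(\d u)<\infty$) so that the dominating function is genuinely integrable; once $z$ is confined to a compact set avoiding $[R,\infty)$ this is routine, but it is the step that needs care.
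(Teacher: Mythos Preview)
Your proposal is correct and follows essentially the same route as the paper: establish that $\mathrm{supp}(\tau)\subset[R,\infty)$ from the analyticity assumption, differentiate under the integral to obtain $\psi'(z)=\int\frac{\tau(\d u)}{u-z}$, and then push forward under $u\mapsto u^{-1}$ to reach the Stieltjes form with measure $v\,{}^{*}\tau(\d v)$ supported in $(0,R^{-1}]$. One small remark on ordering: the paper fixes the support of $\tau$ in $[R,\infty)$ \emph{before} differentiating, which immediately dissolves your stated worry about the behavior of $\tau$ near $0$ --- there is simply no mass there --- and reduces the dominated-convergence step to a triviality since on any compact $K\subset\mathbb{C}\setminus[R,\infty)$ one has $\sup_{z\in K,\,u\ge R}|u/(u-z)|<\infty$ and $\int_{[R,\infty)}u^{-1}\tau(\d u)<\infty$.
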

\begin{proof}
The assumptions that $\psi(z)$ is analytic with radius of convergence $R$ implies $\tau$ has support in $[R,\infty)$. Therefore, for $z \notin [R,\infty)$
\begin{align*}
\psi'(z) = \frac{\text{d}}{\text{d}z}\int_{[R,\infty)}\log\left(\frac{u}{u - z}\right)\tau(\text{d}u) = \int_{[R,\infty)}\frac{\tau(\text{d}u)}{u - z} = \int_{(0,R^{-1}]}\frac{u\ \!\! ^*\tau(\text{d}u)}{1-uz},
\end{align*}
where we have made the change of variables $u \mapsto u^{-1}$ in the last step. That the measure $u\ \!\! ^*\tau(\text{d}u)$ has moments of all orders is guaranteed by the conditions imposed on $\tau$ in Definition \ref{def:ggc} and the fact that the support of $\tau$ lies in  $[R,\infty)$.
\end{proof}
\begin{proposition}\label{prop:ggc}
For all $n \geq 2$ the function
\begin{align*}
\varphi_n(z) = \exp\left(\int_0^z(\psi')^{[n-1/n]}(w)\text{d}w\right),\quad \re(z) < R,
\end{align*}
is the MGF of a convolution of $n$ independent gamma distrubtions. The corresponding random variable $Y^{(n-1)}$ has the property
\begin{align*}
\e[(Y^{(n-1)})^k] = \e[Y^k],\quad 1 \leq k \leq 2n-1. 
\end{align*}
\end{proposition}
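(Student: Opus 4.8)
The plan is to identify $(\psi')^{[n-1/n]}(z)$ as (a reflected version of) a Padé approximant of the Stieltjes function $\psi'(-z)$, and then read off from the Padé theory that it is the Laplace exponent of a finite gamma convolution. By Lemma~\ref{lem:ggcstiel}, $g(z) := \psi'(-z)$ is a Stieltjes function with radius of convergence $R$; hence by Theorem~\ref{theo:exist} (with $k=-1$) the approximant $g^{[n-1/n]}(z)$ exists for all $n \geq 2$, has $n$ simple poles in $(-\infty,-R]$, and all residues are positive. Translating back to $\psi'$, this says $(\psi')^{[n-1/n]}(w)$ is a rational function of the form $\sum_{j=1}^{n} \frac{\alpha_j}{1 - w/\beta_j}$ with $\alpha_j > 0$ and $\beta_j \geq R$; here I should note that the Padé approximant of $\psi'$ at zero and that of $g$ at zero correspond under $w \mapsto -w$, and that $\deg P = n-1 < n = \deg Q$ forces the partial-fraction expansion to have no polynomial part.

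Next I would integrate termwise: $\int_0^z \frac{\alpha_j}{1 - w/\beta_j}\,\d w = -\alpha_j \beta_j \log\!\bigl(1 - z/\beta_j\bigr)$, so that
\begin{align*}
\varphi_n(z) = \exp\left(\sum_{j=1}^{n} \alpha_j\beta_j \log\left(\frac{1}{1 - z/\beta_j}\right)\right) = \prod_{j=1}^{n}\left(1 - \frac{z}{\beta_j}\right)^{-\alpha_j \beta_j}.
\end{align*}
Setting $a_j := \alpha_j\beta_j > 0$ and $b_j := \beta_j \geq R > 0$, this is exactly $\prod_{j=1}^n (1 - z/b_j)^{-a_j}$, the MGF of a convolution of $n$ independent $\Gamma(a_j, b_j)$ distributions, valid for $\re(z) < R \leq b_j$. (If some $\beta_j$ coincide the factors merge, but the statement "convolution of $n$ gamma distributions" still holds with the convention that $\Gamma(a,b)*\Gamma(a',b) = \Gamma(a+a',b)$; I would either mention this or invoke genericity.)

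For the moment-matching claim, the key fact is the defining property of the Padé approximant: $(\psi')^{[n-1/n]}(w) = \psi'(w) + O(w^{2n})$ as $w \to 0$, since $m+n+1 = (n-1)+n+1 = 2n$. Integrating from $0$ to $z$ preserves the order of contact, giving $\log \varphi_n(z) = \psi(z) - \psi(0) + O(z^{2n}) = \psi(z) + O(z^{2n})$ (using $\psi(0) = 0$ since $\varphi(0)=1$), and hence $\varphi_n(z) = \varphi(z) + O(z^{2n})$; the cumulant generating functions agree to order $2n-1$, so all cumulants up to order $2n-1$ match, and therefore so do the moments up to order $2n-1$.

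The main obstacle I anticipate is not any single deep step but rather the bookkeeping around the reflection $w \mapsto -w$ and the partial-fraction structure: one must be careful that Theorem~\ref{theo:exist} is stated for Stieltjes functions in the variable with cut on $(-\infty,-R]$, whereas $\psi'$ has its cut on $[R,\infty)$, so the positivity of residues and the location of poles must be transported correctly to conclude $a_j > 0$ and $b_j \geq R$ (which is what guarantees both that the $\Gamma(a_j,b_j)$ are genuine distributions and that $\varphi_n$ is analytic on $\re(z) < R$). Everything else is routine termwise integration and Taylor-order tracking.
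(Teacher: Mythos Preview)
Your proposal is correct and follows essentially the same route as the paper: apply Theorem~\ref{theo:exist} to the Stieltjes function $\psi'(-z)$ from Lemma~\ref{lem:ggcstiel}, read off the partial-fraction form with positive residues and poles in $[R,\infty)$, integrate termwise to obtain a product of gamma MGFs, and use the $O(w^{2n})$ Pad\'e contact to match the first $2n-1$ cumulants and hence moments. The only cosmetic difference is your parametrization (your exponents are $\alpha_j\beta_j$ rather than the paper's $\alpha_i$, which are the residues directly), and your careful tracking of the reflection $w\mapsto -w$ is exactly the bookkeeping the paper leaves implicit.
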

\begin{proof}
According to Theorem \ref{theo:exist} we have
\begin{align*}
(\psi')^{[n-1/n]}(-w) = \sum_{i=1}^{n}\frac{\alpha_i}{w + \beta_i},
\end{align*}
where $\alpha_i > 0$ and $\beta_i > R$ for all $1\leq i \leq n$. As a result
\begin{align*}
\varphi_n(z) = \exp\left(\sum_{i=1}^n\int_0^{z}\frac{\alpha_i}{\beta_i - w}\text{d}w\right) = \prod_{i=1}^{n}\left(1 - \frac{z}{\beta_i}\right)^{-\alpha_i},
\end{align*}
which demonstrates the first part of the claim. For the second part, we simply need to observe that by defintions of Pad\'{e} approximations and Stieltjes functions we have
\begin{align*}
\psi'(z) = \sum_{k=0}^{\infty}m_kz^{k}\quad\text{ and }\quad \log(\varphi_n(z)) = \int_{0}^z\sum_{k=0}^{\infty}c_kw^{k}\text{d}w,\quad \vert z \vert < R,
\end{align*}
where $m_k = c_k$ for $0 \leq k \leq 2n-1$. From here, it is easy to see that the first $2n-1$ cumulants of $Y$ and $Y^{(n-1)}$, and therefore also the first $2n-1$ moments, are identical.
\end{proof}
\noindent A connection between Stieltjes functions and Pad\'e approximations and MEs is also easy to establish. In what follows suppose that $Y$ is a random variable whose distribution is a ME with MGF $\varphi(z)$ such that the measure $\mu$ in Definition \ref{def:mixture} has infinite support. Further assume that $\varphi(z)$ is analytic at zero such that its power series has radius of convergence $R$.
\begin{lemma}\label{lem:me}
The function $\varphi(-z)$ is a Stieltjes function with radius of convergence $R$.
\end{lemma}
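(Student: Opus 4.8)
The plan is to read off the Stieltjes representation of $\varphi(-z)$ directly from the mixture representation in Definition \ref{def:mixture} by the change of variables $u \mapsto u^{-1}$, in close analogy with the proof of Lemma \ref{lem:ggcstiel} but somewhat simpler, since here no logarithm and no differentiation under the integral sign are involved.

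First I would note that for $\re(z) \ge 0$, substituting $u \mapsto u^{-1}$ in the integral of Definition \ref{def:mixture} gives
\begin{align*}
\varphi(-z) = \int_{(0,\infty]}\frac{u}{u+z}\,\mu(\text{d}u) = \int_{(0,\infty]}\frac{\mu(\text{d}u)}{1+z/u} = \int_{\crp}\frac{{}^{*}\mu(\text{d}v)}{1+zv},
\end{align*}
with the convention $\infty^{-1} := 0$, so that ${}^{*}\mu$ is again a probability measure on $\crp$ — in particular a positive measure — and a possible atom of $\mu$ at $\infty$ simply becomes an atom of ${}^{*}\mu$ at $0$. Since $u \mapsto u^{-1}$ is a homeomorphism of $(0,\infty]$ onto $\crp$, it maps $\operatorname{supp}\mu$ bijectively onto $\operatorname{supp}{}^{*}\mu$, so ${}^{*}\mu$ has infinite support because $\mu$ does.

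It then remains to see that ${}^{*}\mu$ has finite moments $m_k = \int_{\crp}v^k\,{}^{*}\mu(\text{d}v) = \int_{(0,\infty]}u^{-k}\,\mu(\text{d}u)$ and that the associated Stieltjes series $\sum_{k} m_k(-z)^k$ has radius of convergence $R$. For this I would argue that this Stieltjes series is precisely the power series of $\varphi(-z)$ at $0$: expanding $u/(u-z) = \sum_{k\ge 0}(z/u)^k$ and integrating term by term — legitimate for real $z$ close to $0$, where all terms are nonnegative — identifies the $k$-th Taylor coefficient of $\varphi$ at $0$ with $m_k$, exactly as in the proof of Lemma \ref{lem:ggcstiel}. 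The hypothesis that $\varphi$ is analytic at $0$ with radius of convergence $R$ then forces every $m_k$ to be finite and the radius of convergence of $\sum_{k} m_k(-z)^k$ to equal $R$; by the remark following \eqref{eq:stielser} this is in turn equivalent to $\operatorname{supp}{}^{*}\mu \subset [0,R^{-1}]$ with $\sup\operatorname{supp}{}^{*}\mu = R^{-1}$. Hence all the requirements in the definition of a Stieltjes function with radius of convergence $R$ are met, which proves the lemma. I do not expect a genuine obstacle here; the only points needing a little care are the bookkeeping around the atoms at $\infty$ and $0$, and the routine verification — essentially guaranteed by the analyticity assumption — that the Taylor coefficients of $\varphi$ at $0$ are indeed the numbers $m_k$, which is the same subtlety one meets in Lemma \ref{lem:ggcstiel}.
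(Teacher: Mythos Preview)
Your approach is essentially the paper's: the change of variables $u\mapsto u^{-1}$ turns the mixture representation into a Stieltjes integral, and analyticity of $\varphi$ at $0$ with radius $R$ pins down the support/moments. The paper's two-line proof reverses your order: it first observes that analyticity with radius $R$ forces $\operatorname{supp}\mu\subset[R,\infty]$, and only then changes variables; this makes the moment finiteness and the radius statement immediate.

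One small point worth tightening: your Tonelli justification ``for real $z$ close to $0$, where all terms are nonnegative'' implicitly uses the integral representation $\varphi(z)=\int u/(u-z)\,\mu(\text{d}u)$ for $z>0$, but Definition~\ref{def:mixture} only gives this for $\re(z)\le 0$, and for $z>0$ the geometric series diverges on $\{u\le z\}$. The cleanest fix is exactly the paper's: deduce $\operatorname{supp}\mu\subset[R,\infty]$ first (e.g.\ from the fact that $\varphi$ extends analytically across $(0,R)$, whereas any mass of $\mu$ on $(0,R)$ would create a singularity there), after which your expansion is fully justified. Alternatively, identify $m_k$ with $\e[Y^k]/k!$ directly from the density $\int u e^{-uy}\mu(\text{d}u)$ and use that analyticity of the MGF forces all moments of $Y$ to be finite.
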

\begin{proof}
The assumption that $\varphi(z)$ is analystic with radius of convergence $R$ implies that the measure $\mu$ in the in Definition \ref{def:mixture} has support in $[R,\infty]$. Applying the change of variables $x \mapsto x^{-1}$ in the integral \eqref{eq:stiel} gives the result.
\end{proof}
\noindent This leads us directly to the result which is the analogy for MEs to Proposition \ref{prop:ggc}.
\begin{proposition}\label{prop:me}
For $\re(z) < R$, the function $\varphi^{[n-1/n]}(z)$ (resp. $\varphi^{[n/n]}(z)$) is a MGF of a finite mixtures of exponentials. The corresponding random variable $Y^{(n-1)}$ (resp. $Y^{(n)}$)  has the property
\begin{align*}
\e[(Y^{(n-1)})^k] = \e[Y^k],\quad 1 \leq k \leq 2n-1,\quad \left(\text{resp. } \e[(Y^{(n)})^k] = \e[Y^k],\quad 1 \leq k \leq 2n\right).
\end{align*}
\end{proposition}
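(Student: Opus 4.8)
The plan is to reduce everything to the Stieltjes structure already supplied by Lemma~\ref{lem:me} and Theorem~\ref{theo:exist}. First I would record that, by Lemma~\ref{lem:me}, $\varphi(-z)$ is a Stieltjes function with radius of convergence $R$; concretely, after the change of variables $x\mapsto x^{-1}$ in \eqref{eq:stiel},
\[
\varphi(-z)=\int_{[0,R^{-1}]}\frac{{}^{*}\mu(\d v)}{1+vz},
\]
where ${}^{*}\mu$ is the pushforward of $\mu$ under $x\mapsto x^{-1}$, a probability measure (total mass $\varphi(0)=1$) whose support lies in $[0,R^{-1}]$ by the hypothesis on $R$. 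Since $n\ge 2$ and the orders $[n-1/n]$ and $[n/n]$ are of the form $[n+k/n]$ with $k=-1$ and $k=0$, Theorem~\ref{theo:exist} applies: both $\varphi^{[n-1/n]}(-z)$ and $\varphi^{[n/n]}(-z)$ exist, and each is a rational function whose poles are simple, lie in $(-\infty,-R]$, and have positive residues.

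Next I would pass to partial fractions. For $\varphi^{[n-1/n]}(-z)$ the numerator has degree at most $n-1$ while the denominator has degree exactly $n$, so there is no polynomial part and
\[
\varphi^{[n-1/n]}(-z)=\sum_{i=1}^{n}\frac{w_i}{1+v_i z},\qquad w_i>0,\quad 0<v_i\le R^{-1},
\]
the $-1/v_i$ being the poles. For $\varphi^{[n/n]}(-z)$ the numerator degree may equal $n$, which adds a single constant $w_0:=\lim_{z\to\infty}\varphi^{[n/n]}(-z)$, so that $\varphi^{[n/n]}(-z)=w_0+\sum_{i=1}^{n}w_i/(1+v_iz)$. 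Evaluating at $z=0$ and using that a Pad\'e approximant reproduces the value $\varphi(0)=1$ gives $\sum_i w_i=1$ in the first case and $w_0+\sum_i w_i=1$ in the second. Undoing the substitution, i.e.\ setting $u_i:=v_i^{-1}\in[R,\infty)$ (and $u_0:=\infty$ in the diagonal case), turns these expressions into $\sum_i w_i\,u_i/(u_i-z)$, which is exactly the MGF in Definition~\ref{def:mixture} associated with the finitely supported probability measure $\sum_i w_i\delta_{u_i}$; hence $\varphi^{[n-1/n]}$ and $\varphi^{[n/n]}$ are MGFs of finite mixtures of exponentials, analytic on $\{\re(z)<R\}$.

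The only point that is not immediate from Theorem~\ref{theo:exist} is the nonnegativity of the extra mass $w_0$ in the diagonal case (in the $[n-1/n]$ case $w_0=0$ automatically). I would obtain it from the classical bracketing inequalities for Pad\'e approximants of Stieltjes series: on the positive real axis one has $\varphi^{[n-1/n]}(-z)\le\varphi(-z)\le\varphi^{[n/n]}(-z)$, and in particular $\varphi^{[n-1/n]}(-z)\le\varphi^{[n/n]}(-z)$ for all $z>0$; since $\varphi^{[n-1/n]}(-z)\to 0$ as $z\to+\infty$ this forces $w_0=\lim_{z\to+\infty}\varphi^{[n/n]}(-z)\ge 0$. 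Equivalently one can invoke that the numerator of the diagonal Stieltjes--Pad\'e approximant has only negative real zeros, so that $\varphi^{[n/n]}(-z)>0$ on $(0,\infty)$. Verifying this bracketing property cleanly from the references behind Theorem~\ref{theo:exist} is what I expect to be the main obstacle; everything else is bookkeeping.

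Finally, the moment identities come from the defining order of contact. The $[n-1/n]$ approximant agrees with $\varphi$ to $O(z^{2n})$ and the $[n/n]$ approximant to $O(z^{2n+1})$, so the Taylor coefficients at zero---hence, after multiplying by $k!$, the moments---of $Y^{(n-1)}$ (resp.\ $Y^{(n)}$) coincide with those of $Y$ for $1\le k\le 2n-1$ (resp.\ $1\le k\le 2n$); the $k=0$ coefficient already matched and was used above. Since finite mixtures of exponentials have moments of all orders and all the MGFs involved are analytic at $0$, this comparison of power series is legitimate, which completes the argument.
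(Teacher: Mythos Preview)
Your proposal is correct and follows exactly the route the paper intends: the paper's own proof is the single sentence ``Follows immediately from Lemma~\ref{lem:me} and Theorem~\ref{theo:exist},'' and you have simply unpacked what that means. The one point you single out as the potential obstacle---nonnegativity of the constant $w_0$ in the diagonal $[n/n]$ case---is indeed not contained in Theorem~\ref{theo:exist} as stated (which only asserts positivity of the residues), so your appeal to the standard bracketing inequalities $f^{[n-1/n]}\le f\le f^{[n/n]}$ on $(0,\infty)$ from Baker is a genuine, if minor, addition; the paper glosses over this and in any event only uses the $[n-1/n]$ approximant thereafter.
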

\begin{proof}
Follows immediately from Lemma \ref{lem:me} and Theorem \ref{theo:exist}.
\end{proof}
\begin{remark}
Recall that the MGF of an ME distributed random variable $Y$ has the form
\begin{align*}
\int_{(0,\infty]}\frac{u}{u-z}\mu(\text{d}u),
\end{align*}
where $\mu$ is a probability distribution, which may have an atom with weight $a$ at $\infty$. If this is the case, then the distribution of $Y$ will have an atom with weight $a$ at zero. In choosing an approximation by a finite mixture of exponentials via the Pad\'{e} approximation, it is clear from Proposition \ref{prop:me} that we can adjust the approximation to be either absolutely continuous (by choosing the $[n-1/n]$ approximation) or to have an atom at zero (by choosing the $[n/n]$ approximation). Further, according to Theorem \ref{theo:rog}, and the fact that NIG processes belong to $\mathcal{CM}$, the distribution of $S_{\eq}$ will be an ME, and it is easy to show that $\p[S_{\eq} = 0] = 0$, which holds essentially because the NIG process is an infinite variation process. Therefore, the distribution of $S_{\eq}$ will be absolutely continuous, and we will focus only on the $[n-1/n]$ approximation in this article. The same is true of $-I_{\eq}$, of course.\demo
\end{remark}
 
\noindent We end this section with a brief description of the computation of the coefficients of $[n-1/n]$ Pad\'{e} approximants. For the interested reader, the book \cite{Baker} by Baker is a good source for information on Pad\'{e} approximants in general. Consider a function $f(z)=\sum_{k = 0}^{\infty} c_k z^k$, whose $[n-1/n]$ Pad\'{e} approximant $f^{[n-1/n]}(z)=P_n-1(z)/Q_n(z)$ is known to exist. First, we solve the system of $n$ linear equations 
\beq\label{find_b}
\left( \begin{array}{ccccc}
c_{0} & c_{1} & c_{2} & \cdots & c_{n-1} \\
c_{1} & c_{2} & c_{3} &\cdots & c_{n} \\
c_{2} & c_{3} & c_{4} &\cdots & c_{n+1} \\
\vdots & \vdots & \vdots & \ddots & \vdots  \\
c_{n-1} & c_{n} & c_{n+1} &\cdots & c_{2n-1} 
\end{array} \right)
\left( \begin{array}{c}
b_n \\ b_{n-1} \\ b_{n-2} \\ \vdots \\ b_1
\end{array} \right)
=-
\left( \begin{array}{c}
c_{n} \\ c_{n+1} \\ c_{n+2} \\ \vdots \\ c_{2n}
\end{array} \right)
\eeq
whose solutions $b_k$, $1\le k \le n$, give us the coefficients of the denominator $Q_n(z):=1+b_1z\\+b_2z^2+\cdots+b_nz^n$. Then, the coefficients of the numerator
$P_{n-1}(z):=a_0+a_1z+a_2z^2+\cdots+a_{n-1}z^{n-1}$ can be calculated as follows:
\beq\label{compute_coefficients_a} \nonumber
&&a_0=c_0, \\ \nonumber
&&a_1=c_1+b_1c_0, \\
&&a_2=c_2+b_1c_2+b_2c_0,\\ \nonumber
&& \;\;\;\;\;\vdots\\ \nonumber
&&a_{n-1}=c_{n-1}+\sum\limits_{k=1}^n b_k c_{n-1-k}. 
\eeq
In practice, when $n$ is even moderately large, the system in \eqref{find_b} will have a very large condition number, and solving the system of linear equations \eqref{find_b} will likely involve a loss of accuracy. This can be avoided by using higher precision arithmetic. For the computations in this article we use Mathematica, which supports arbitrary precision arithmetic, as well as the MPFUN90 arbitrary precision package for Fortran-90 \cite{Bailey95afortran90}.
\section{The Wiener-Hopf factorization for the NIG process}\label{sect:main}
Before we state the main results, we consider a general method for determining the Wiener-Hopf factors, which will be used in the proof of Theorem \ref{theo:main}, but is theoretically valid for those L\'{e}vy processes whose Laplace exponents are analytic at zero. Let $X$ be a L\'{e}vy process and let $S$ (resp. $I$) be the running supremum (resp. infimum) process. The standard Wiener-Hopf theory for L\'{e}vy process (see for example Theorem 6.15 in \cite{Kyprianou}) then shows: a) $S_{\eq}$ and $-I_{\eq}$ are positive, ID random variables without drift or Gaussian component; and b) $X_{\eq} \edist S_{\eq} + I_{\eq}$. Therefore, we must have
\begin{align*}
\e[e^{zX_{\eq}}] = \exp\left(\int_{\mathbb{R}}\left(e^{zx} - 1\right) \Pi_q(\text{d}x )\right)
\end{align*}
for some L\'{e}vy measure $\Pi_q$ on $\mathcal{B}_{\mathbb{R}}$, which satisfies the condition
\begin{align}\label{eq:finmeas}
\int_{\mathbb{R}}\min(1,\vert x \vert)\Pi_q(\text{d}x) < \infty.
\end{align}
It follows that
\begin{align}\label{eq:lkwhf}
\varphi_q^+(z) = \exp\left(\int_{\mathbb{R}^+}\left(e^{zx} - 1\right) \Pi_q^+(\text{d}x )\right)\,\,\text{and}\,\,\varphi_q^-(z) = \exp\left(\int_{\mathbb{R}^-}\left(e^{zx} - 1\right) \Pi_q^-(\text{d}x )\right),
\end{align}
where $\Pi^+_q$ is the measure $\Pi_q$ restricted to $\mathbb{R}^+$ and  $\Pi^-_q$ is the measure $\Pi_q$ restricted to $\mathbb{R}^-$. In what follows, the central idea is to determine $\Pi_q$ by inversion of the Laplace transform, from which it is straightforward to identify $\Pi^+_q$ and $\Pi^-_q$ and therefore derive the explicit L\'{e}vy-Khintchine representation \eqref{eq:lkwhf} of the Wiener-Hopf factors.\\ \\
\noindent To this end we make some observations, which are either well-known facts, or have straightforward proofs:
\begin{enumerate}[(O1)]
\item If $\psi_X(z)$ is analytic at zero, then
\begin{align}\label{eq:impobs}
\Phi_q(z) := \frac{\text{d}}{\text{d}z}\log\left(\e[e^{zX_{\eq}}]\right) = \int_{\mathbb{R}}e^{zx}x\Pi_q(\text{d}x),
\end{align}
which is finite at least on some strip $S_{\alpha} = \{z \in \mathbb{C}\,:\, -\alpha < \re(z) < \alpha \}$, $\alpha := \alpha(q) > 0$.
\item It follows from \eqref{eq:finmeas} and (O1) that the measure $x\Pi_q(\text{d}x)$ is a finite, signed measure.
\item From (O2) it follows that the function $F_q(t) := \int_{(-\infty,t]}x\Pi_q(\text{d}x)$ is a right continuous function of bounded variation with the property $F_q(-\infty) = 0$ (see \cite{folland}, pg. 104, Theorem 3.29).
\item From (O1) it follows that $F_q(t) = o(e^{C t})$ as $t \rightarrow -\infty$ for $-\alpha < C < 0$ and $F(\infty) - F(t) = o(e^{-C t})$ as $t \rightarrow \infty$ for $0 < C < \alpha$, from which, together with (O3), it follows, via integration by parts, that
\begin{align*}
-\frac{\Phi_q(z)}{z} &= \int_{\mathbb{R}}e^{zt}F_q(t)\text{d}t,\quad -\alpha < \re(z) < 0, \quad \text{ and }\\
 \quad  \frac{\Phi_q(z)}{z} &= \int_{\mathbb{R}}e^{zt}\left(F_q(\infty) - F_q(t)\right)\text{d}t,\quad 0 < \re(z) < \alpha.
\end{align*}
\item From (O4) it follows that
\begin{align}
 \frac{F_q(t+) + F_q(t -)}{2} &= \frac{1}{2\pi \iota}\int_{C + \iota\mathbb{R}}e^{-tz}\left(-\frac{\Phi_q(z)}{z}\right)\text{d}z,\quad -\alpha < C < 0,\quad\text{ and } \label{eq:negt}\\
F_q(\infty) -  \frac{F_q(t+) + F_q(t -)}{2} &= \frac{1}{2\pi \iota}\int_{C + \iota\mathbb{R}}e^{-tz}\left(\frac{\Phi_q(z)}{z}\right)\text{d}z,\quad 0 < C < \alpha, \label{eq:post}
\end{align}
(see \cite{doetsch}, pg. 169, Satz 24.3).
\item From (O5) and (O3) it follows that if $G_q(t) := \frac{1}{2}(F_q(t+) + F_q(t -))$ is continuous at $t$, then $F_q(t)$ is also continuous at $t$ and $F_q(t) = G_q(t)$. I.e. the procedure in (O5) actually returns the original function values $F_q(t)$, or $F_q(\infty) - F_q(t)$, wherever $G_q(t)$ is continuous.
\item If, in addition, we can determine $F_q'(t)$ almost everywhere (w.r.t. the Lebesgue measure), and $F_q(t) = \int_{-\infty}^{t}F_q'(x)\text{d}x$, then $x\Pi_q(\text{d}x) = F_q'(x)\text{d}x$.
\end{enumerate}
We now use the above described approach for the NIG process. The reader should assume that the notation $X$, $\psi_X(z)$, $S_{\eq}$, and $I_{\eq}$ refers to a NIG process for the remainder of this section. We will see shortly that the form of the L\'{e}vy-Khintchine representation of the Wiener-Hopf factors of the NIG process depends on: a) whether or not $\zeta$ and $\hat{\zeta}$ are solutions of \eqref{eq:main}, i.e. of $q = \psi_X(z)$; and b) whether or not $\zeta = \rho$ or $\hat{\zeta} = \hat{\rho}$ (see Section \ref{sec:obs} and Proposition \ref{prop:themostannoying} for definitions and properties of $\zeta$, $\hat{\zeta}$, $\rho$, and $\hat{\rho}$) . Let us define the following cases for $\zeta$
\begin{enumerate}[I:]
\item $\zeta$ is not a solution of \eqref{eq:main} and $\zeta \neq \rho$
\item $\zeta$ is a solution of  \eqref{eq:main} and $\zeta \neq \rho$
\item  $\zeta = \rho$.
\end{enumerate}
Similarly, for $\hat{\zeta}$ we define
\begin{enumerate}[A:]
\item $\hat{\zeta}$ is not a solution of \eqref{eq:main} and $\hat{\zeta} \neq \hat{\rho}$
\item $\hat{\zeta}$ is a solution of \eqref{eq:main} and $\hat{\zeta} \neq \hat{\rho}$
\item $\hat{\zeta} = \hat{\rho}$.
\end{enumerate}
Next, let us define the (not necessarily positive) measures
\begin{gather}\label{eq:themus}
\mu_q^+(\text{d}u) := \ind(u > \rho)\frac{a(bu - c)}{\pi(u - \zeta)(u - \hat{\zeta})\sqrt{(u - \rho)(u - \hat{\rho})}}\text{d}u\\
\mu_q^-(\text{d}u) := \ind(u < \hat{\rho})\frac{a(bu - c)}{\pi(u - \zeta)(u - \hat{\zeta})\sqrt{(u - \rho)(u - \hat{\rho})}}\text{d}u,\nonumber
\end{gather}
where
\begin{align}\label{eq:const}
a := \frac{1}{\sigma\kappa^{3/2}(\mu^2 + \sigma^2/\kappa)},\quad\quad b:= \theta\mu\kappa + (q\kappa - 1)\sigma^2,\quad\quad c:= \mu-\theta(q\kappa-1).
\end{align}
Similarly, we define
\begin{gather}
\nu_q^+(\text{d}u) := \ind(u > \rho)\frac{ab}{\pi(u - \hat{\zeta})\sqrt{(u - \rho)(u - \hat{\rho})}}\text{d}u, \quad \nu_q^-(\text{d}u) := \ind(u < \hat{\rho})\frac{ab}{\pi(u - \hat{\zeta})\sqrt{(u - \rho)(u - \hat{\rho})}}\text{d}u,\label{eq:thenus}\\
\lambda_q^+(\text{d}u) := \ind(u > \rho)\frac{ab}{\pi(u - \zeta)\sqrt{(u - \rho)(u - \hat{\rho})}}\text{d}u,\quad \lambda_q^-(\text{d}u) := \ind(u < \hat{\rho})\frac{ab}{\pi(u - \zeta)\sqrt{(u - \rho)(u - \hat{\rho})}}\text{d}u,\label{eq:thelams}
\end{gather}
where $a$ and $b$ are as in \eqref{eq:const}. \\ \\
\noindent With these definitions we can give our first main result, the L\'{e}vy-Kintchine representation of the Wiener-Hopf factors of the NIG process.
\begin{theorem}\label{theo:main}
For the NIG process, the measures $\Pi^+_q$ and $\Pi^-_q$ are absolutely continuous with respect to the Lebesgue measure with densities
\begin{align*}
\pi^+_q(x) = \ind(x > 0)\frac{1}{x}\int_{\mathbb{R}^+}e^{-xu}\omega_q^+(\text{d}u)\quad\text{ and }\quad \pi^-_q(x) = \ind(x < 0)\frac{1}{x}\int_{\mathbb{R}^-}e^{-xu}\omega_q^-(\text{d}u),
\end{align*}
where the forms of $\omega_q^+$ and $\omega_q^-$ are case dependent and are given in Table \ref{tab:meas}.
\renewcommand{\arraystretch}{3.25}
\begin{table}
\begin{center}
\begin{tabular}{ | c || c | c | c |}
  \hline 
\diagbox{$\mathbf{Case}$}{$\mathbf{Case}$} & I & II & III \\
\hline \hline 
A & {$\!\begin{aligned} 
               \omega_q^+ &= \mu_q^+  \\ 
               \omega_q^- &= \mu_q^- \end{aligned}$} & {$\!\begin{aligned} 
               \omega_q^+ &= \mu_q^+ + \delta_{\zeta}\\ 
               \omega_q^- &= \mu_q^- \end{aligned}$} &{$\!\begin{aligned} 
               \omega_q^+ &= \nu_q^+ + \frac{1}{2}\delta_{\rho} \\ 
               \omega_q^- &= \nu_q^-\end{aligned}$} \\[2ex]
\hline
B &{$\!\begin{aligned} 
               \omega_q^+ &= \mu_q^+ \\ 
               \omega_q^- &= \mu_q^- - \delta_{\hat{\zeta}}\end{aligned}$} &{$\!\begin{aligned} 
               \omega_q^+ &= \mu_q^+ + \delta_{\zeta} \\ 
               \omega_q^- &= \mu_q^- - \delta_{\hat{\zeta}}\end{aligned}$}  & {$\!\begin{aligned} 
               \omega_q^+ &= \nu_q^+ + \frac{1}{2}\delta_{\rho} \\ 
               \omega_q^- &= \nu_q^- - \delta_{\hat{\zeta}}\end{aligned}$}\\[2ex]
\hline
C &{$\!\begin{aligned} 
               \omega_q^+ &= \lambda_q^+ \\ 
               \omega_q^- &= \lambda_q^- -\frac{1}{2}\delta_{\hat{\rho}}\end{aligned}$} & {$\!\begin{aligned} 
               \omega_q^+ &= \lambda_q^+ +\delta_{\zeta} \\ 
               \omega_q^- &= \lambda_q^- -\frac{1}{2}\delta_{\hat{\rho}}\end{aligned}$} & {$\!\begin{aligned} 
               \omega_q^+ &= \frac{1}{2}\delta_{\rho} \\ 
               \omega_q^- &= -\frac{1}{2}\delta_{\hat{\rho}} \end{aligned}$} \\[3ex]
\hline
\end{tabular}
\caption{Form of the measures $\omega_q^+$ and $\omega_q^-$.}\label{tab:meas}
\end{center}
\end{table}
\end{theorem}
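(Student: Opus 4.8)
The plan is to execute the program sketched in observations (O1)--(O7) for the NIG Laplace exponent. First I would compute $\Phi_q(z) = \frac{\d}{\d z}\log\left(\frac{q}{q-\psi_X(z)}\right) = \frac{\psi_X'(z)}{q - \psi_X(z)}$ explicitly. Since $\psi_X(z) = \frac{1}{\kappa} - \frac{1}{\kappa}\sqrt{p(z)} + \mu z$ with $p(z) = (1-z/\rho)(1-z/\hat\rho)$, we get $\psi_X'(z) = \frac{-p'(z)}{2\kappa\sqrt{p(z)}} + \mu$, and $q - \psi_X(z) = q - \frac{1}{\kappa} + \frac{1}{\kappa}\sqrt{p(z)} - \mu z$. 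Rationalizing the denominator by multiplying by $q - \frac{1}{\kappa} - \frac{1}{\kappa}\sqrt{p(z)} - \mu z$ turns the denominator into $\bigl(q - \frac1\kappa - \mu z\bigr)^2 - \frac{1}{\kappa^2}p(z) = \frac{1}{\kappa^2}\bigl([r(z)]^2 - p(z)\bigr)$ up to sign, a quadratic with roots $\zeta,\hat\zeta$. Thus $\Phi_q(z)$ becomes a rational function of $z$ plus a term of the form $\frac{(\text{linear in }z)}{(z-\zeta)(z-\hat\zeta)\sqrt{p(z)}}$; partial-fraction the rational part and the $1/[(z-\zeta)(z-\hat\zeta)]$ coefficient of the $1/\sqrt{p(z)}$ part, matching the constants $a$, $b$, $c$ of \eqref{eq:const}. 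I would keep careful track of which ``branch'' of $\sqrt{p(z)}$ is meant (positive on the strip $(\hat\rho,\rho)$) since this fixes signs.

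Next I would invert the Laplace transform. By (O5), $F_q$ and $F_q(\infty)-F_q$ are recovered from Bromwich integrals of $\mp\Phi_q(z)/z$ along vertical lines in the left/right half of the strip $S_\alpha$. The rational (pole) pieces of $\Phi_q(z)/z$ contribute exponential terms $e^{\zeta t}$, $e^{\hat\zeta t}$, $e^{\rho t}$, $e^{\hat\rho t}$ (or a constant from the pole at $0$, which reproduces $F_q(\infty)$) according to which poles lie to the left or right of the contour — and here Proposition \ref{prop:themostannoying} is exactly what I need: it tells me whether $\zeta,\hat\zeta$ are genuine simple zeros of $q-\psi_X$ strictly inside $(\hat\rho,\rho)$ (Cases II/B), whether they coincide with $\rho,\hat\rho$ (Cases III/C), or whether they are spurious roots of the associated quadratic that are not roots of $q=\psi_X$ (Cases I/A, where the apparent poles at $\zeta,\hat\zeta$ are cancelled because they are also zeros of the numerator, having come from the rationalization). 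The branch-cut piece $\frac{bz-c}{(z-\zeta)(z-\hat\zeta)\sqrt{p(z)}}$ inverts, by collapsing the Bromwich contour around the cuts $(-\infty,\hat\rho]$ and $[\rho,\infty)$, to an integral of $e^{ut}$ (for $t>0$, around the right cut) or $e^{-|u|t}$-type against the jump of $1/\sqrt{p}$ across the cut, i.e. against $\pm\frac{1}{\pi\sqrt{|(u-\rho)(u-\hat\rho)|}}$; this is precisely how $\mu_q^{\pm}$, $\nu_q^{\pm}$, $\lambda_q^{\pm}$ arise. Differentiating $F_q$ in $t$ (O7) gives $x\Pi_q(\d x) = F_q'(x)\,\d x$, and dividing by $x$ and matching with the Bernstein/Laplace form $\frac1x\int e^{-xu}\omega_q^{\pm}(\d u)$ identifies $\omega_q^{\pm}$; the signs of the $\delta$-masses at $\zeta,\hat\zeta,\rho,\hat\rho$ fall out of residue signs and the factor-$\tfrac12$ at $\rho,\hat\rho$ reflects the square-root vanishing (half-residue at the cut endpoint).

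I would organize the proof by first doing the generic Case I/A in full, then explaining how each of the remaining eight cells of Table \ref{tab:meas} is obtained as a perturbation: moving from I to II adds the pole at $\zeta$ once it becomes a true zero of $q-\psi_X$ (giving $+\delta_\zeta$), moving from A to B adds $-\delta_{\hat\zeta}$ (sign because it sits on the negative cut side and $x\Pi_q$ has opposite sign there), and moving to III or C is the confluent limit $\zeta\to\rho$ (resp. $\hat\zeta\to\hat\rho$), in which the simple pole and a square-root branch point merge: the $\frac{bz-c}{(z-\zeta)\sqrt{(z-\rho)(z-\hat\rho)}}$-type factor degenerates, one factor of $(u-\rho)$ cancels leaving the $\nu_q^{\pm}$ / $\lambda_q^{\pm}$ forms, and the endpoint of the cut carries a genuine point mass $\tfrac12\delta_\rho$ (resp. $\tfrac12\delta_{\hat\rho}$); case III $\cap$ C is $\mu=0,q=1/\kappa$ by Proposition \ref{prop:themostannoying}(vii), where everything collapses to the two half-atoms and reproduces the gamma-$\tfrac12$ computation from the introduction. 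Finally I would verify the consistency check $\varphi_q^+(z)\varphi_q^-(z) = q/(q-\psi_X(z))$ by confirming $\int_{\mathbb{R}^+}(e^{zx}-1)\pi_q^+(x)\,\d x + \int_{\mathbb{R}^-}(e^{zx}-1)\pi_q^-(x)\,\d x = \log\frac{q}{q-\psi_X(z)}$, and check the integrability condition \eqref{eq:finmeas} for each case (the $1/\sqrt{u-\rho}$ singularity at the cut endpoint is integrable, and the $(bu-c)/u^{3}$-type decay at infinity gives the $\min(1,|x|)$ bound near $x=0$). The main obstacle I anticipate is bookkeeping: correctly tracking the branch of $\sqrt{p(z)}$, the signs of residues on the two sides of the contour, and the cancellation of the spurious quadratic roots in Cases I/A — that is, proving rigorously that in those cases the apparent singularities at $\zeta,\hat\zeta$ are removable so that no $\delta$-mass appears — together with justifying the contour-deformation and the application of (O5)'s inversion theorem (verifying the decay hypotheses of (O4) uniformly, which is where the precise strip $S_\alpha$ and analyticity off the cuts are used).
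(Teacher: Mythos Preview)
Your plan matches the paper's proof almost exactly: both execute the (O1)--(O7) program by deforming the Bromwich contour for $\Phi_q(z)/z$ onto the cuts $(-\infty,\hat\rho]$ and $[\rho,\infty)$, picking up a residue at $\zeta$ or $\hat\zeta$ in Cases II/B and the half-contribution from the small circle around $\rho$ or $\hat\rho$ in Cases III/C, then differentiating $F_q$ via (O7). The only real difference is that you propose rationalizing $\Phi_q$ first into a rational piece plus a $1/\sqrt{p}$ piece and then tracking the cancellation of the spurious poles at $\zeta,\hat\zeta$ in Cases I/A, whereas the paper keeps $\Phi_q$ in the unrationalized form \eqref{eq:phiq} and computes the jump across the cut directly by adding the upper and lower boundary values --- which produces the $(u-\zeta)(u-\hat\zeta)$ denominator in one stroke and sidesteps that cancellation bookkeeping entirely, since in Case I--A the original $\Phi_q$ simply has no pole in the deformed region.
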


\begin{proof}
Our goal will be to determine the function $F_q(t)$ from the preceding discussion and its derivative. To do this, we will derive an expression for the function $G_q(t)$ via the Formulas \ref{eq:negt} and \ref{eq:post}. Specifically, we will derive an expression for $G_q^{-}(t) := \ind(t < 0)G_q(t)$ using Formula \ref{eq:negt} and an expression for $G_q^{+}(t) := \ind(t>0)G_q(t)$ using Formula \ref{eq:post} for the cases I-A, II-A, and III-A. The other cases can be treated in an analogous manner. \\ \\
\noindent To begin, note that
\begin{align}\label{eq:phiq}
\Phi_q(z) = \frac{\text{d}}{\text{d}z}\log\left(\frac{q}{q - \psi_X(z)}\right) =\frac{\psi_X'(z)}{q-\psi_X(z)} = \frac{\theta + z\sigma^2 + \mu\sqrt{\left(1 - \frac{z}{\rho}\right)\left(1 - \frac{z}{\hat{\rho}}\right)}}{\sqrt{\left(1 - \frac{z}{\rho}\right)\left(1 - \frac{z}{\hat{\rho}}\right)}\left(q - \frac{1}{\kappa} + \frac{1}{\kappa}\sqrt{\left(1 - \frac{z}{\rho}\right)\left(1 - \frac{z}{\hat{\rho}}\right)} - \mu z\right)},
\end{align}
which is a well defined function on $\mathbb{C}\backslash(\infty,\hat{\rho}]\cup[\rho,\infty)$ except possibly at the points $\zeta$ and $\hat{\zeta}$, which may be simple poles. Let us now proceed on a case-by-case basis.\\ \\
\underline{Case: I-A} \\ \\
In this case, the function $q - \psi_X(z)$ has no zeros. In particular, the singularities of $\Phi_q(z)/z$ in the open right half-plane are restricted to $\rho$. To derive a general expression for $G^+_q(t)$ we apply Formula \ref{eq:post} with some $0 < C < \rho$ and consider the integral of $e^{-zt}\Phi_q(z)/z$ along the line $C + \iota\mathbb{R}$ for fixed $t > 0$. To evaluate this, we consider instead the integral of our function along the contour $H$ of Figure \ref{fig:contour}, i.e.
\begin{align}\label{eq:cauchyint}
\int_{C - \iota R}^{C + \iota R}e^{-tz}\frac{\Phi_q(z)}{z}\text{d}z = -\int_{\eta_1}e^{-tz}\frac{\Phi_q(z)}{z}\text{d}z = \sum_{i=2}^{6}\int_{\eta_i}e^{-tz}\frac{\Phi_q(z)}{z}\text{d}z,
\end{align}
where we have used Cauchy's integral theorem in the last equality. In the limit $R \rightarrow \infty$ the integrals along the contours $\eta_2$ and $\eta_6$ vanish. Then, letting $\delta \rightarrow 0$ we have 
\begin{align*}
e^{-tu}\frac{\Phi_q(u)}{u} = \iota e^{-tu}\frac{\theta + u\sigma^2 -\iota \mu\sqrt{-\left(1 - \frac{u}{\rho}\right)\left(1 - \frac{u}{\hat{\rho}}\right)}}{u\sqrt{-\left(1 - \frac{u}{\rho}\right)\left(1 - \frac{u}{\hat{\rho}}\right)}\left(q - \frac{1}{\kappa} - \frac{\iota}{\kappa}\sqrt{-\left(1 - \frac{u}{\rho}\right)\left(1 - \frac{u}{\hat{\rho}}\right)} - \mu u\right)}, \quad u \in (\rho + \epsilon,\infty).
\end{align*}
along the contour $\eta_3$. Similarly along $\eta_5$ we have
\begin{align*}
e^{-tu}\frac{\Phi_q(u)}{u} = -\iota e^{-tu}\frac{\theta + u\sigma^2 +\iota \mu\sqrt{-\left(1 - \frac{u}{\rho}\right)\left(1 - \frac{u}{\hat{\rho}}\right)}}{u\sqrt{-\left(1 - \frac{u}{\rho}\right)\left(1 - \frac{u}{\hat{\rho}}\right)}\left(q - \frac{1}{\kappa} + \frac{\iota}{\kappa}\sqrt{-\left(1 - \frac{u}{\rho}\right)\left(1 - \frac{u}{\hat{\rho}}\right)} - \mu u\right)}, \quad u \in (\rho + \epsilon,\infty).
\end{align*}
Now, adding these integrands together and letting $\epsilon \rightarrow 0$ -- note that the integral along $\eta_4$ vanishes with $\epsilon$ -- we arrive at
\begin{align}\label{eq:fqright}
F_q(\infty) - G_q^+(t) = \frac{1}{2 \pi \iota}\int_{C+\iota\mathbb{R}}e^{-tz}\left(\frac{\Phi_q(z)}{z}\right)\text{d}z = \frac{a}{\pi}\int_{\rho}^{\infty}e^{-tu}\frac{bu - c}{u(u - \zeta)(u - \hat{\zeta})\sqrt{(u - \rho)(u - \hat{\rho})}}\text{d}u.
\end{align}
Using $\hat{\rho} < -C < 0$, the contour $\Gamma$ from Figure \ref{fig:contour}, and the identical approach we can show that for $t < 0$
\begin{align}\label{eq:fqleft}
G_q^-(t) = \frac{1}{2 \pi \iota}\int_{-C+\iota\mathbb{R}}e^{-tz}\left(-\frac{\Phi_q(z)}{z}\right)\text{d}z = -\frac{a}{\pi}\int^{\hat{\rho}}_{-\infty}e^{-tu}\frac{bu - c}{u(u - \zeta)(u - \hat{\zeta})\sqrt{(u - \rho)(u - \hat{\rho})}}\text{d}u.
\end{align}
Note that since we have assumed that $\rho \neq \zeta$ and $\hat{\rho} \neq \hat{\zeta}$, and since $\rho \neq \hat{\zeta}$ and $\hat{\rho} \neq \zeta$ by Proposition \ref{prop:themostannoying} (vi), the singularities in the integrals \eqref{eq:fqright} and \eqref{eq:fqleft} at $\rho$ and $\hat{\rho}$ remain integrable.\\ \\ 
\noindent \underline{II-A} \\ \\
\noindent For this case, the derivation of the function $G^-_q(t)$ remains the same. The major difference is that since $\zeta$ is a solution of \eqref{eq:main}, $\Phi_q(z)/z$ has a simple pole in the interval $(0,\rho)$ (see Proposition \ref{prop:themostannoying} (iii) and (iv)). The effect of this is that \eqref{eq:fqright} becomes 
\begin{align*}
F_q(\infty) - G_q^+(t) = \frac{a}{\pi}\int_{\rho}^{\infty}e^{-tu}\frac{bu - c}{u(u - \zeta)(u - \hat{\zeta})\sqrt{(u - \rho)(u - \hat{\rho})}}\d u + \frac{e^{-t\zeta}}{\zeta}.
\end{align*}
\underline{III-A} \\ \\
In this case the key difference is that both $\zeta$ and $\rho$ solve the associated quadratic equation \eqref{eq:assocquad}. It is easy to see that if this is the case and $\mu = 0$, then also $q = \frac{1}{\kappa}$ and therefore also $\hat{\rho} = \hat{\zeta}$ (see Proposition \eqref{prop:themostannoying} (vii)), which we explicitly assume is not the case here (we have chosen case A). Thus, we assume that $\mu \neq 0$, which together with our previous assumptions implies that $\rho = \left(q - \frac{1}{\kappa}\right)/\mu$. This has two consequences. The first is that \eqref{eq:fqleft} simplifies to
\begin{align*}
G_q^-(t) = -\frac{a}{\pi}\int^{\hat{\rho}}_{-\infty}e^{-tu}\frac{b}{u(u - \hat{\zeta})\sqrt{(u - \rho)(u - \hat{\rho})}}\text{d}u,
\end{align*}
and the second is that the integral along the contour $\eta_4$ does not vanish as $\epsilon \rightarrow 0$. Making the change of variables $z = \rho + \epsilon e^{\iota w}$, taking the limit as $\epsilon \rightarrow 0$, and applying the dominated convergence theorem shows that
\begin{align*}
\lim_{\epsilon \rightarrow 0}\frac{1}{2\pi \iota}\int_{\eta_4}e^{-tz}\frac{\Phi(z)}{z}\text{d}z = -\frac{1}{2 \pi \iota}\int_{0}^{2\pi}\lim_{\epsilon \rightarrow 0}e^{-t(\rho + \epsilon e^{\iota w})}\frac{\Phi(\rho + \epsilon e^{\iota w})}{\rho + \epsilon e^{\iota w}}\iota\epsilon e^{\iota w}\text{d}w = \frac{e^{-t\rho}}{2\rho}.
\end{align*}
Therefore we have
\begin{align*}
F_q(\infty) - G_q^+(t) = \frac{a}{\pi}\int_{\rho}^{\infty}e^{-tu}\frac{b}{u(u - \hat{\zeta})\sqrt{(u - \rho)(u - \hat{\rho})}}\text{d}u + \frac{e^{-t\rho}}{2\rho} ,
\end{align*}
where the same simplification takes place in the integrand as for $G_q^-(t)$.
\\ \\
Now, independent of the case, we remark that $G^-_q(t)$ and $G^+_q(t)$ are not only continuous, but also differentiable functions on $(-\infty,0)$  and  $(0,\infty)$ respectively. By (O6), this implies that we have identified an explicit expression for $F_q(t)$ at every point $t \neq 0$. However, since $F_q(t)$ is by assumption right-continuous, and since $F_q(0)- F_q(0-) = \int_{\{0\}}x\nu_q(\text{d}x) = 0$, we can actually conclude that $F_q(t)$ is continuous also at zero and define $F_q(0)$ as either $G^-_q(0-)$ or equivalently as $G^+_q(0+)$. If we define $g^-_q(t) := (G^-_q)'(t)$ for $t < 0$ and $g^+_q(t) := (G^+_q)'(t)$ for $t > 0$, the above discussion shows that 
\begin{align*}
F_q(t) = \int_{-\infty}^{t}\ind(x > 0)g_q^+(x)  + \ind(x < 0)g^-_q(x)\text{d}x, \quad t \in \mathbb{R}.
\end{align*}
Employing (O7) then gives the desired result.
\end{proof}
\begin{figure}[!htb]
\centering
\includegraphics[scale=0.60]{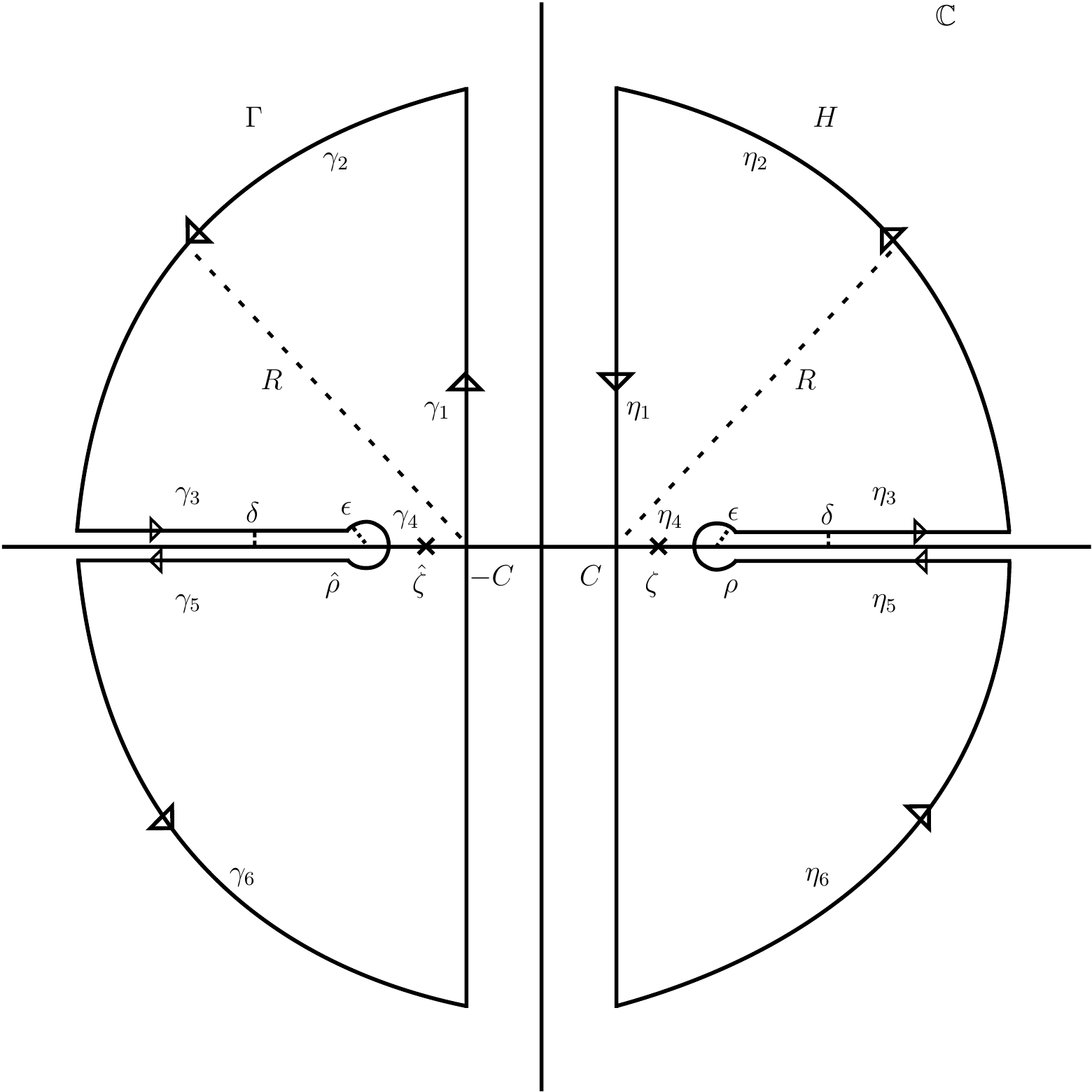}
\caption{Contours of integration}
\label{fig:contour}
\end{figure}
\noindent Going forward, for a measure $\mu$, let $_*{\mu}$ denote the pushforward measure under the transformation $x \mapsto -x$.
\begin{corollary}\label{cor:ggc}
If $\omega_q^+$ (resp. $-_*\omega^-_q$)  is a positive measure  then $S_{\eq}$ (resp. $-I_{\eq}$) has a distribution that is a GGC. The corresponding Thorin measure is given by $\tau_q^+ = \omega_q^+$ (resp. $\tau_q^- = -_*\omega_q^-$).
\end{corollary}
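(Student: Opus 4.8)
The plan is to read the statement off the explicit L\'{e}vy density of the Wiener-Hopf factors produced in Theorem \ref{theo:main}, combining it with Bondesson's characterization of GGCs (Theorem \ref{theo:bondy}) and Bernstein's theorem on completely monotone functions. First I would recall, from the discussion preceding Theorem \ref{theo:main}, that standard Wiener-Hopf theory makes $S_{\eq}$ a positive, infinitely divisible random variable with no drift and no Gaussian component, and that $\varphi_q^+(z)=\e[e^{zS_{\eq}}]=\exp\big(\int_{\orp}(e^{zx}-1)\Pi_q^+(\d x)\big)$, so that $\Pi_q^+$ is precisely the L\'{e}vy measure of $S_{\eq}$. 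By Theorem \ref{theo:main}, $\Pi_q^+$ has a density which for $x>0$ equals $\pi_q^+(x)=\frac{1}{x}\int_{\orp}e^{-xu}\omega_q^+(\d u)$.

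Assume now $\omega_q^+$ is a positive measure; then (see Table \ref{tab:meas}) it is a positive Radon measure supported on $\orp$, so by Bernstein's theorem $x\pi_q^+(x)=\int_{\orp}e^{-xu}\omega_q^+(\d u)$ is completely monotone on $(0,\infty)$. Hence the law of $S_{\eq}$ is an infinitely divisible distribution on $\crp$ whose L\'{e}vy density $\pi_q^+$ has the property that $x\pi_q^+(x)$ is completely monotone, and Theorem \ref{theo:bondy} then yields that this law is a GGC. Moreover, since the representing measure of a completely monotone function is unique, comparing $x\pi_q^+(x)=\int_{\orp}e^{-xu}\tau_q^+(\d u)$, the relation supplied by Theorem \ref{theo:bondy}, with the expression above forces $\tau_q^+=\omega_q^+$; the integrability conditions of Definition \ref{def:ggc} are then automatic, being built into the equivalence in Theorem \ref{theo:bondy}.

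For the infimum the only extra ingredient is a change of variables. Since $\varphi_q^-(z)=\e[e^{zI_{\eq}}]$, the variable $-I_{\eq}$ has moment generating function $\e[e^{z(-I_{\eq})}]=\varphi_q^-(-z)=\exp\big(\int_{\orn}(e^{-zx}-1)\Pi_q^-(\d x)\big)$, and the substitution $x\mapsto -x$ identifies the L\'{e}vy measure of $-I_{\eq}$ as the pushforward ${}_*\Pi_q^-$, which is supported on $\orp$. Inserting the density of $\Pi_q^-$ from Theorem \ref{theo:main} and performing the companion substitution $u\mapsto -u$ inside the integral, one finds that ${}_*\Pi_q^-$ has density $\frac{1}{x}\int_{\orp}e^{-xu}(-_*\omega_q^-)(\d u)$ for $x>0$. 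When $-_*\omega_q^-$ is positive, the argument of the previous paragraph applies verbatim and gives that $-I_{\eq}$ has a GGC law with Thorin measure $\tau_q^-=-_*\omega_q^-$.

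The proof is essentially bookkeeping, since the substantive work — writing the L\'{e}vy density of the Wiener-Hopf factors in the Bernstein-integral form $\frac{1}{x}\int e^{-xu}\omega_q^{\pm}(\d u)$ — was already carried out in Theorem \ref{theo:main}. The one point demanding care is tracking signs and supports through the reflections $x\mapsto -x$ and $u\mapsto -u$ in the infimum case, and noting that $\omega_q^+$, respectively $-_*\omega_q^-$, carries no mass at the origin, so that the Bernstein representing measure is genuinely a Radon measure on $\orp$ — exactly as a Thorin measure must be.
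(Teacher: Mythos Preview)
Your proof is correct and follows exactly the route the paper takes: invoke Theorem \ref{theo:main} for the explicit L\'evy density and then apply Bondesson's characterization (Theorem \ref{theo:bondy}). The paper's proof is a one-line reference to these two results, and your argument simply fills in the details (Bernstein's theorem, uniqueness of the representing measure, and the reflection for the infimum case).
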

\begin{proof}
This follows from Theorem \ref{theo:main} and Theorem \ref{theo:bondy}.
\end{proof}
\begin{remark} From the definition of the measures $\omega_q^+$ and $-_*\omega^-_q$, it is clear they are not always positive (see Example \ref{ex:negmeas} below). That is, we may not conlude, in general, that the distributions of $S_{\eq}$ and of $-I_{\eq}$ are GGCs. Determining whether or not the measures are positive or signed is straightforward, we simply need to determine the slope and intercept of the line $\ell(u) := bu - c$ in \eqref{eq:themus} or the sign of $b$ in \eqref{eq:thenus} and \eqref{eq:thelams}. In doing so, it is easy to see that in the signed case the measure $\omega_q^+$ always breaks down into the difference of two finite, positive measures with the following characteristics: The measure that contributes positive mass is supported either on an interval, at one point, or on the union of an interval and a disjoint point. The measure that contributes negative mass will always be supported on a non-empty interval. Each measure assigns no mass to a non-empty interval $(0,R)$, where either $R= \zeta$ or $R = \rho$. This breakdown describes the Jordan decomposition of the measure $\omega_q^+$, which we can always determine exactly in this manner.  Further, the above statements are equally true for the measure $-_*\omega^-_q$, with $R = -\hat{\zeta}$ or $R = -\hat{\rho}$.\demo
\end{remark}
\begin{example}\label{ex:negmeas}
We consider an example to demostrate an instance of the previous remark. Let
\begin{align*}
(\theta,\mu,\kappa,\sigma,q) = \left(-1,\frac{7}{32},16,1,\frac{19}{64}\right).
\end{align*}
In this case $b$, the slope of the line $\ell(u)$, is positive, and the intercept with the horizontal axis, $c/b = 127/8 = 15.875$, occurs to the right of $\rho = 1 + \sqrt{17}/4 \approx 2.030776$. Additionally, while $\zeta \approx 1.805903$ solves \eqref{eq:main} $\hat{\zeta} \approx 0.256043$ does not, i.e. we have described an instance of Case II-A. Thus we have
\begin{align*}
\omega_q^+(\text{d}u) = &\underbrace{\delta_\zeta(\text{d}u) \,+ \, \ind\left( \frac{c}{b} < u < \infty \right)\frac{a(bu - c)}{\pi(u - \zeta)(u - \hat{\zeta})\sqrt{(u - \rho)(u - \hat{\rho})}}\text{d}u}_{+} \\ & - \,\underbrace{\ind\left( \rho < u < \frac{c}{b}\right)\frac{a(c - bu)}{\pi(u - \zeta)(u - \hat{\zeta})\sqrt{(u - \rho)(u - \hat{\rho})}}\text{d}u}_{-}.
\end{align*} 
where $+$ and $-$ denote the positive and negative contributions of $\omega_q^+$ respectively.\rdemo
\end{example}
\noindent For the following Corollary, we will require the ideas from the previous discussion as well as  the Frullani identity, which states that for a continuously differentiable function $f(x)$ we have
\begin{align*}
\int_{0}^{\infty}\frac{f(ax) - f(bx)}{x}\text{d}x = (f(0) - f(\infty))\log\left(\frac{b}{a}\right),
\end{align*} 
where we assume $0 \leq a \leq b$ and that $f(0)$ and $f(\infty)$ are finite. 
\begin{corollary}\label{cor:gegc}
The Laplace exponent of $S_{\eq}$ (resp. $-I_{\eq}$) has the form
\begin{align}\label{eq:ggclike}
\psi_q^+(z) = \int_{\mathbb{R}^+}\log\left(\frac{u}{u-z}\right)\tau_q^+(\text{d}u),\quad \left(\text{resp.  } \psi_q^-(-z) = \int_{\mathbb{R}^+}\log\left(\frac{u}{u-z}\right)\tau_q^-(\text{d}u) \right)
\end{align}
where $\tau_q^+ = \omega_q^+$ (resp. $\tau_q^- = -_*\omega_q^-$). The equality \eqref{eq:ggclike} holds for $\re(z) < R$, where $R = \zeta$ (resp. $R = -\hat{\zeta}$) whenever $\zeta$ (resp. $\hat{\zeta}$) satisfies \eqref{eq:main}, and $R = \rho$ (resp. $R = -\hat{\rho}$) otherwise.
\end{corollary}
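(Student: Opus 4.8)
The plan is to feed the explicit density $\pi_q^+$ furnished by Theorem~\ref{theo:main} into the L\'{e}vy--Khintchine representation \eqref{eq:lkwhf}, interchange the two integrations by Fubini, and recognise the resulting inner integral as a Frullani integral. Concretely, for $\re(z)<R$ one writes
\[
\psi_q^+(z)=\int_{\orp}\bigl(e^{zx}-1\bigr)\pi_q^+(x)\,\d x=\int_{\orp}\frac{e^{zx}-1}{x}\left(\int_{\orp}e^{-xu}\,\omega_q^+(\d u)\right)\d x ,
\]
and after swapping the order of integration the inner $x$-integral becomes, for each fixed $u$ in the support of $\omega_q^+$,
\[
\int_{\orp}\frac{e^{-(u-z)x}-e^{-ux}}{x}\,\d x=\log\!\left(\frac{u}{u-z}\right),
\]
by the Frullani identity applied to $f(x)=e^{-x}$ (valid for real $z<u$, and then for all complex $z$ with $\re(z)<u$ by analytic continuation in $z$, since both sides are analytic there and agree on a real interval). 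Integrating this against $\omega_q^+$ gives exactly \eqref{eq:ggclike} with $\tau_q^+=\omega_q^+$, and the analogous computation for $\psi_q^-$ — using the $\Pi_q^-$ half of \eqref{eq:lkwhf} and the change of variables $x\mapsto -x$ — yields the bracketed statement with $\tau_q^-=-_*\omega_q^-$.

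Since $\omega_q^+$ need not be positive, the interchange needs some care: I would first pass to its Jordan decomposition $\omega_q^+=\omega_1-\omega_2$ into two \emph{finite} positive measures — as described in the remark following Corollary~\ref{cor:ggc}, each is supported in $[R,\infty)$ possibly together with a single atom — apply Tonelli to each of $\omega_1,\omega_2$ separately, and recombine. Tonelli applies because $(e^{zx}-1)/x$ stays bounded as $x\downarrow 0$, while for $x\to\infty$ and $u\ge R>\re(z)$ the factor $e^{-x(u-\re z)}$ forces exponential decay, and the only singular-looking feature of the densities in \eqref{eq:themus}--\eqref{eq:thelams}, namely the $|u-\rho|^{-1/2}$ (or $|u-\hat\rho|^{-1/2}$) behaviour at the branch point, is in fact integrable; the same estimates show that each piece of $\psi_q^{\pm}$ is finite precisely on the stated half-plane.

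It remains to pin down $R$. The left endpoint of the support of $\omega_q^+$ is $\zeta$ in Case~II (there $\omega_q^+=\mu_q^++\delta_\zeta$ with $\zeta<\rho$ by Proposition~\ref{prop:themostannoying}), is $\rho$ in Case~I, and is $\rho=\zeta$ in Case~III; the function $z\mapsto\int_{\orp}\log(u/(u-z))\,\omega_q^+(\d u)$ is analytic for $\re(z)$ below that endpoint and genuinely singular there (an atom, or the accumulation of the absolutely continuous part at $\rho$), so the Taylor series of $\psi_q^+$ at $0$ has radius of convergence $R=\zeta$ when $\zeta$ solves \eqref{eq:main} and $R=\rho$ otherwise. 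Replacing $\omega_q^+,\zeta,\rho$ throughout by $-_*\omega_q^-,-\hat\zeta,-\hat\rho$ gives the claim for $-I_{\eq}$.

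The main obstacle I anticipate is exactly the signed-measure Fubini step combined with the behaviour at the branch points $\rho,\hat\rho$: one must verify that the $|u-\rho|^{-1/2}$ singularity of the density of $\omega_q^+$ is integrable against the kernels $e^{-xu}$ and $\log(u/(u-z))$ uniformly enough to justify interchanging the integrals (and, later, differentiating $\psi_q^+$ term by term), and that when $R=\rho$ (Cases~I and~III) no spurious boundary contribution is created at $u=\rho$. Everything else is a direct application of Theorem~\ref{theo:main} and the Frullani identity.
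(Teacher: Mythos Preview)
Your proposal is correct and follows essentially the same approach as the paper: both start from the L\'evy--Khintchine form of $\psi_q^+$ given by Theorem~\ref{theo:main}, use the Jordan decomposition of the signed measure $\omega_q^+$ to justify Fubini on each positive piece separately, and invoke the Frullani identity to identify the inner integral with $\log(u/(u-z))$, extending to the full half-plane by analytic continuation. The paper organizes the argument slightly differently---it first treats real $z\le 0$ and $0<z<R$ separately (with different Frullani parameters) before continuing analytically---but the substance is identical, and your identification of the integrable $|u-\rho|^{-1/2}$ singularity and the finiteness of the $\chi^{\pm}$ as the key technical point matches exactly what the paper uses.
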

\begin{proof}
If $\omega^+_q$ is a postive measure, then the result follows immediately from Corollary \ref{cor:ggc}. Otherwise, let $\chi^{\pm}$ denote the Jordan decomposition of $\omega^+_q$, which has the relatively simple form described in the discussion preceding the statement of the corollary, and assume first that $z \leq 0$. Then
\begin{align*}
\int_{\orp}\left\vert \log\left(\frac{u}{u-z}\right)\right\vert\chi^{+}(\text{d}u) < \infty,\quad\text{ and }\quad \int_{\orp}\left\vert \log\left(\frac{u}{u-z}\right)\right\vert\chi^{-}(\text{d}u) < \infty,
\end{align*}
since each of $\chi^{+}$ and $\chi^{-}$ is finite and assigns no mass to the interval $(0,R)$. Applying Frullani's identity with $f(x) = -e^{-x}$, $a = y$, $b = y - z$ we get
\begin{align*}
\int_{\orp}\left\vert \log\left(\frac{u}{u-z}\right)\right\vert\chi^{+}(\text{d}u) &= \int_{\orp}\left\vert\int_0^{\infty}\frac{e^{-(u-z)x} - e^{-ux}}{x}\text{d}x\right\vert \chi^{+}(\text{d}u) \\ &= \int_{\orp}\int_{0}^{\infty}\left\vert (e^{zx} - 1) \frac{e^{-ux}}{x} \right\vert\text{d}x\chi^{+}(\text{d}u) < \infty,
\end{align*}
with the identical result for $\chi^-$. It follows that we may apply Fubini's Theorem for each of $\chi^+$ and $\chi^-$ separately, which, after recombining, establishes the result for $z \leq 0$. For $0 < z < R$ repeating the above excercise with $f(x) = e^{-x}$, $a = y - z$, $b = y$ shows that the result can be extend to $z < R$.  However, it is not difficult to see that
\begin{align*}
\int_{\mathbb{R}^+}\log\left(\frac{u}{u-z}\right)\omega_q^+(\text{d}u),\quad\text{ and }\quad \int_{\orp}\left(e^{zx} - 1\right)\pi^+_q(x)\text{d}x,
\end{align*}
are analytic functions for $\re(z) < R$. By analytic continuation, the functions must be equal on this half-plane. The proof for $-_*\omega^-_q$ is identical.
\end{proof}
\noindent We can also use the given results to determine the distribution of the  overall supremum $S_{\infty} := \lim_{t\rightarrow \infty}S_t \overset{a.s.}{=} \lim_{q\rightarrow0}S_{\eq}$ and overall infimum $I_{\infty} := \lim_{t\rightarrow \infty}I_t \overset{a.s.}{=} \lim_{q\rightarrow0}I_{\eq}$, which exist as real valued random variables when $\e[X_1] = \theta + \mu < 0$ and  $0 < \theta + \mu$ respectively. To do so, we need to consider the limits $\lim_{q \rightarrow 0}\psi^{+}_q(z)$ and $\lim_{q \rightarrow 0}\psi^{-}_q(z)$. \\ \\
\noindent In what follows, we allow $\zeta$ and $\hat{\zeta}$ to extend to the case $q = 0$. It is easy to show that for this case we have $\{\zeta(0),\hat{\zeta}(0)\} = \{0,-2(\theta + \mu)/(\kappa\mu^2 + \sigma^2)\}$, where the assignment of the zero root to either $\zeta(0)$ or $\hat{\zeta}(0)$ depends on the value of $\theta + \mu$. Likewise, the terms $b$ and $c$ from \eqref{eq:const} and therefore also the measures $\mu_q^{\pm}$, $\nu_q^{\pm}$ and $\lambda_q^{\pm}$ from \eqref{eq:themus}, \eqref{eq:thenus}, and \eqref{eq:thelams} respectively are all well-defined also for $q = 0$.
\begin{corollary}\label{cor:q0}
If $\theta + \mu < 0$ (resp. $\theta + \mu > 0$) then the Laplace exponent of $S_{\infty}$ (resp. $-I_{\infty}$) has the form 
\begin{align}\label{eq:limcaseii}
\psi_0^+(z) = \int_{\orp}\log\left(\frac{u}{u-z}\right)\tau_0^+(\text{d}u)\quad\quad\left(\text{ resp. }\psi_0^-(-z) = \int_{\orp}\log\left(\frac{u}{u-z}\right)\tau_0^-(\text{d}u)\right),
\end{align}
where $\tau^+_0 = \omega^+_0$ (resp. $\tau_0^- = -_*\omega^-_0$) and 
\begin{align*}
\omega_0^{+} &= 
\begin{dcases}
\mu_0^+ &,  \quad \zeta(0) \text{ is not a solution of } \psi_X(z) = 0 \text{ and } \rho \neq \zeta(0)\\
\mu_0^+ + \delta_{\zeta(0) } &,  \quad \zeta(0)  \text{ is a solution of } \psi_X(z) = 0 \text{ and }  \rho \neq \zeta(0)\\
\nu_0^+ + \delta_{\zeta(0) } &, \quad \rho=\zeta(0)
\end{dcases}\\
\left(\text{resp.  }\vphantom{\begin{dcases}
\mu_0^- &,  \quad \hat{\zeta}(0) \text{ is not a solution of } \psi_X(z) = 0 \text{ and } \hat{\rho} \neq \hat{\zeta}(0)\\
\mu_0^- - \delta_{\hat{\zeta}(0) } &,  \quad \zeta(0)  \text{ is a solution of } \psi_X(z) = 0 \text{ and }  \hat{\rho} \neq \hat{\zeta}(0)\\
\nu_0^- - \delta_{\hat{\zeta}(0) } &, \quad \hat{\rho}=\hat{\zeta}(0)
\end{dcases}} \omega_0^{-} \right.&= \left.
\begin{dcases}
\mu_0^- &,  \quad \hat{\zeta}(0) \text{ is not a solution of } \psi_X(z) = 0 \text{ and } \hat{\rho} \neq \hat{\zeta}(0)\\
\mu_0^- - \delta_{\hat{\zeta}(0) } &,  \quad \zeta(0)  \text{ is a solution of } \psi_X(z) = 0 \text{ and }  \hat{\rho} \neq \hat{\zeta}(0)\\
\lambda_0^- - \delta_{\hat{\zeta}(0) } &, \quad \hat{\rho}=\hat{\zeta}(0)
\end{dcases}
\right).
\end{align*}
The equality \eqref{eq:limcaseii} holds for $\re(z) < R$, where $R = \zeta(0)$ (resp. $R = -\hat{\zeta}(0)$) whenever $\zeta(0)$ (resp. $\hat{\zeta}(0)$) satisfies $\psi_X(z) = 0$ and $R = \rho$ (resp. $R = -\hat{\rho}$) otherwise.
\end{corollary}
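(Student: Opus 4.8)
The plan is to obtain Corollary \ref{cor:q0} by letting $q\downarrow 0$ in the representation \eqref{eq:ggclike} of Corollary \ref{cor:gegc}, handling the two sides of the identity separately; I describe the argument for $S_\infty$, the case of $-I_\infty$ being symmetric. For the left-hand side: when $\e[X_1]=\theta+\mu<0$ the NIG process drifts to $-\infty$, so $S_\infty$ is a.s.\ finite and, as already noted in the text, $S_{\eq}\to S_\infty$ as $q\downarrow 0$. Since $x\mapsto e^{zx}$ is bounded and continuous on $[0,\infty]$ for every $z\le 0$, bounded (equivalently, weak) convergence gives $\varphi_q^+(z)\to\e[e^{zS_\infty}]$ and hence $\psi_q^+(z)\to\psi_0^+(z):=\log\e[e^{zS_\infty}]$ for all $z\le 0$.

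For the right-hand side I would first record that $d$, $\zeta(q)$, $\hat\zeta(q)$ and the constants $a,b,c$ of \eqref{eq:const} all depend analytically (in particular continuously) on $q$ in a right-neighbourhood of $0$, with $\sqrt{d}\to|\theta+\mu|$, so that $\{\zeta(0),\hat\zeta(0)\}=\{0,\,-2(\theta+\mu)/(\kappa\mu^2+\sigma^2)\}$ and, since $\theta+\mu<0$, the positive root is $\zeta(0)$ while $\hat\zeta(0)=0$. It then follows from the explicit formulas \eqref{eq:themus}--\eqref{eq:thelams} that the densities of $\mu_q^+,\nu_q^+,\lambda_q^+$ converge locally uniformly on $(\rho,\infty)$ to those of $\mu_0^+,\nu_0^+,\lambda_0^+$, and $\delta_{\zeta(q)}\to\delta_{\zeta(0)}$ weakly. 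I would then check, separately in each of the three configurations (whether $\zeta(q)$ solves $\psi_X(z)=q$, and whether $\rho=\zeta(0)$), that these densities admit a fixed $\tau_0^+$-integrable envelope, valid uniformly for small $q$, which is integrable at the branch point $u=\rho$ and decays at $u=\infty$. Dominated convergence then yields, for $z\le 0$,
\[
\int_{\orp}\log\!\left(\frac{u}{u-z}\right)\tau_q^+(\text{d}u)\;\longrightarrow\;\int_{\orp}\log\!\left(\frac{u}{u-z}\right)\tau_0^+(\text{d}u),\qquad q\downarrow 0,
\]
with $\tau_0^+=\omega_0^+$ given by the three-case formula in the statement; combined with the first paragraph, the two sides of \eqref{eq:limcaseii} agree for $z\le 0$. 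To extend the identity to all $\re(z)<R$ I would argue exactly as at the close of the proof of Corollary \ref{cor:gegc}: $\omega_0^+$ assigns no mass to $(0,R)$ and satisfies the integrability conditions of Definition \ref{def:ggc}, so $z\mapsto\int_{\orp}\log(u/(u-z))\,\omega_0^+(\text{d}u)$ is analytic on $\{\re(z)<R\}$; $\psi_0^+$ is likewise analytic there; and two analytic functions agreeing on $(-\infty,0]$ agree on the connected domain. The value of $R$ (namely $\zeta(0)$ precisely when the $\delta_{\zeta(0)}$-term is present, and $\rho$ otherwise) is read off from the support of $\omega_0^+$, just as for $q>0$.

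The hard part will be the uniform domination in the middle step, and it is genuinely delicate in the boundary configuration $\rho=\zeta(0)$: there the density of $\mu_q^+$ develops an apparent $(u-\rho)^{-3/2}$-type singularity as $\zeta(q)\to\rho$. One resolves this by verifying that the linear quantity $b(q)\rho-c(q)$ and the gap $\rho-\zeta(q)$ both vanish at comparable (indeed linear) rates as $q\downarrow 0$, so that $\mu_q^+$ converges to $\nu_0^+$ once the cancellation $bu-c=b(u-\rho)$ takes effect — which is precisely why the third case of the formula for $\omega_0^+$ features $\nu_0^+ +\delta_{\zeta(0)}$ rather than $\mu_0^+$. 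Everything else is bookkeeping with the explicit constants and contours already introduced in the proof of Theorem \ref{theo:main}.
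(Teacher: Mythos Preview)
Your overall strategy---pass to the limit $q\downarrow 0$ in the representation of Corollary~\ref{cor:gegc}, split into cases according to whether $\zeta(0)$ solves $\psi_X(z)=0$ and whether $\zeta(0)=\rho$, and use dominated convergence---is exactly the route the paper takes. The handling of the left-hand side and of the first two cases is fine and matches the paper closely.

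The genuine gap is in Case~3 ($\rho=\zeta(0)$), precisely where you flag the difficulty. Your proposed resolution rests on the claim that $b(q)\rho-c(q)$ and $\rho-\zeta(q)$ vanish at \emph{comparable linear} rates, which would give a uniform bound on $(bu-c)/(u-\zeta)$ and hence a fixed integrable envelope. This claim is false: in Case~3 one has $\sigma^2=\kappa\mu(2\theta+\mu)$ and $\mu<0$, and a direct computation from \eqref{eq:themroots} gives
\[
\zeta'(0)=\frac{\kappa\mu+\dfrac{d'(0)}{2\sqrt{d(0)}}}{\kappa\mu^2+\sigma^2}
=\frac{\kappa\mu+\dfrac{2\kappa\mu(\theta+\mu)}{2\,|\theta+\mu|}}{\kappa\mu^2+\sigma^2}
=\frac{\kappa\mu-\kappa\mu}{\kappa\mu^2+\sigma^2}=0,
\]
so $\rho-\zeta(q)$ vanishes to order at least $q^2$, whereas $(c/b)'(0)=-1/\mu\neq 0$, so $c/b-\rho$ is genuinely linear in $q$. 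Consequently, for $u$ in a shrinking window above $\rho$ (say $u-\rho\asymp q^{3/2}$) the density of $\mu_q^+$ blows up like a negative power of $q$, and no $q$-uniform integrable majorant exists; ordinary dominated convergence cannot close the argument.

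The paper confronts exactly this obstruction and sidesteps it by invoking the \emph{generalized} dominated convergence theorem: instead of a fixed envelope, one shows directly that
\[
\lim_{q\to 0}\int_\rho^\infty\Big|\tfrac{u-c/b}{(u-\zeta)(u-\hat\zeta)\sqrt{(u-\rho)(u-\hat\rho)}}\Big|\,\mathrm{d}u
=\int_\rho^\infty\frac{\mathrm{d}u}{u\sqrt{(u-\rho)(u-\hat\rho)}},
\]
and this is done by an explicit partial-fraction evaluation of both sides in closed form (arctangent integrals), tracking the cancellations as $q\downarrow 0$. That computation, not a domination argument, is what carries Case~3.
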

\begin{proof}
We will work through the three possible cases for $S_{\infty}$; the derivation for $-I_{\infty}$ is identical. First, let us make five observations -- essentially extensions of Proposition \ref{prop:themostannoying} for the case $q=0$ plus two obvious facts -- that are easy to verify: (a) $\zeta$ and $\hat{\zeta}$ are real for $q$ small enough; (b) neither $\rho = \hat{\zeta}(0)$ nor $\hat{\rho} =\zeta(0)$ is possible; (c) $z_0$ is a solution of $\psi_X(z) = 0$ iff $z_0 = \hat{\zeta}(0)$ or $z_0 = \zeta(0)$ and $-1/\kappa \leq \mu z_0$; (d) the equation $\rho = \zeta(q)$ (resp. $\hat{\rho} = \hat{\zeta}(q)$) has at most two solutions; and (e) the assumption $\theta + \mu < 0$ implies that $\zeta(0) = -2(\theta + \mu)/(\kappa\mu^2 + \sigma^2) > 0$ and, in particular since $\hat{\rho} < 0$, that $\hat{\rho} \neq \hat{\zeta}=0$.\\ \\
\underline{Case 1}\\ \\
\noindent We assume first that $\zeta(0) $ does not solve $\psi_X(z) = 0$ such that $ \rho\neq\zeta(0) $. It is easy to see that the first part of our assumption, together with observations (a) and (c), implies that $q - 1/\kappa > \mu\zeta$ for $q$ small enough. Applying Proposition \ref{prop:themostannoying} (ii), we see that $\zeta$ is not a solution of \eqref{eq:main} when $q$ is small. Further, from observation (d) it is clear that we may also assume that neither $\rho = \zeta$ nor $\hat{\rho}=\hat{\zeta}$ and therefore that $\omega_q^+ = \mu_q^+$ for small $q$. Then, since: a) the function $\left\vert\log\left(\frac{u}{u-z}\right)\right\vert$ is bounded for $u \in (\rho,\infty)$ for every fixed $z$ such that $\re(z) < \rho$; b) the measure 
\begin{align*}
\frac{\ind(u > \rho)}{\sqrt{u}\sqrt{(u-\rho)(u-\hat{\rho})}}\text{d}u
\end{align*}
is finite; and c) the function
\begin{align}\label{eq:funq}
\left\vert\frac{(bu - c)\sqrt{u}}{(u - \zeta(q))(u - \hat{\zeta}(q))}\right\vert
\end{align}
is bounded for $ (u,q)\in (\rho,\infty)\times[0,\epsilon]$ for $\epsilon$ small enough (due to our assumption that $\rho \neq \zeta(0)$, observations (a) and (b), and Proposition \ref{prop:themostannoying} (v) we know that $\zeta$ and $\hat{\zeta}$ are both strictly less than $\rho$), we can apply the dominated convergence theorem in the integral in \eqref{eq:ggclike} to get the result. Note that observations (d) and (e) ensure that we do not have any cancellation in the numerator and denominator in \eqref{eq:funq} as $q\rightarrow0$, i.e. it is not possible that $\mu_q^+$ becomes $\lambda_q^+$ in the limit.\\ \\
\underline{Case 2}\\ \\
\noindent If we assume that $\zeta(0) $ solves $\psi_X(z) = 0$ and $ \rho\neq\zeta(0) $, then the approach is essentially the same, except that we must show that $\omega_q^+ = \mu_q^+ + \delta_{\zeta}$ for $q$ small enough, i.e. that $\zeta$ becomes a solution of \eqref{eq:main} for $q$ small enough. Proposition \ref{prop:themostannoying} (ii) together with the fact that our assumptions imply that 
\begin{align}\label{eq:thetest}
-1/\kappa < \mu\zeta(0),
\end{align}
shows that $\zeta$ does solve \eqref{eq:main} for small $q$.  To verify \eqref{eq:thetest} we recall that observation (e) states that $\zeta(0) = -2(\theta + \mu)/(\kappa\mu^2 + \sigma^2) > 0$ and consider cases for $\mu$. If $\mu\geq 0$, then clearly \eqref{eq:thetest} clearly holds. If instead $\mu < 0$ and $-1/\kappa > \mu\zeta(0)$, then $\zeta(0)$ is not a solution of $\psi_X(z) = 0$ according to general observation (c), which contradicts our assumptions. Finally, if $\mu < 0$ and $-1/\kappa = \mu\zeta(0)$, then solving for $\sigma$ yields $\sigma = \sqrt{\kappa\mu(2\theta + \mu)}$. Plugging this into the expression for $\rho$ yields $\rho = -1/(\mu\kappa) = \zeta(0)$, which again contradicts our assumptions.\\ \\
\underline{Case 3}\\ \\
\noindent Assuming now that $\rho = \zeta(0)$, we solve this equation for $\sigma^2$, which yields $\sigma^2 = \kappa\mu(2\theta + \mu)$. Plugging this value of $\sigma^2$ into the expression for $\rho$, shows that $\rho = \zeta(0) = -1/(\mu\kappa)$ from which it follows that $\mu  <  0$. General observations (a) and (d) and Proposition \ref{prop:themostannoying} (v), show that $\zeta < \rho = -1/(\kappa\mu)$ for $q$ small enough. It follows that $\mu\zeta > q - 1/\kappa$ for $q$ small enough, which, according to Proposition \ref{prop:themostannoying} (ii), implies that $\zeta$ solves \eqref{eq:main} for $q$ small enough. Therefore, $\omega_q^+ = \mu_q^+ + \delta_\zeta$ when $q$ is small. \\ \\
\noindent To complete the proof for this case, we need to following facts, which have straightforward proofs that are therefore omitted (although (c) requires some rather tedious algebra): (a) $c/b > \rho$ for $q$ small enough; (b) $\lim_{q\rightarrow0}c/b = \rho$ such that for every $u \in (\rho,\infty)$ we have
\begin{align*}
\lim_{q\rightarrow 0}\frac{u - \frac{c}{b}}{(u - \zeta)(u-\hat{\zeta})} = \frac{1}{u};
\end{align*}
and (c) $\lim_{q\rightarrow 0}(\rho - c/b)/\sqrt{\rho - \zeta} = 0$. We now aim to show that
\begin{align}\label{eq:intsetup}
\lim_{q\rightarrow 0}\int_{\rho}^{\infty}\left\vert\frac{u-\frac{c}{b}}{(u-\zeta)(u-\hat{\zeta})\sqrt{(u-\rho)(u-\hat{\rho})}}\right\vert\text{d}u = \int_{\rho}^{\infty}\frac{1}{u\sqrt{(u-\rho)(u-\hat{\rho})}}\text{d}u,
\end{align}
as this, together with the already mentioned fact that the function $\left\vert\log\left(\frac{u}{u-z}\right)\right\vert$ is bounded for $u \in (\rho,\infty)$ for every fixed $z$ such that $\re(z) < \rho$, would allow us to use the generalized form of the dominated convergence theorem (see Theorem 19 pg. 89 in \cite{roy}) in the integral \eqref{eq:ggclike} and complete the proof for this case.\\ \\
\noindent The integral on the right of \eqref{eq:intsetup} is easily evaluated (see \eqref{eq:theintapp}):
\begin{align}\label{eq:whatwewant}
\int_{\rho}^{\infty}\frac{1}{u\sqrt{(u-\rho)(u-\hat{\rho})}}\text{d}u = \frac{1}{\sqrt{-\rho\hat{\rho}}}\left(\arctan\left(\frac{-(\rho + \hat{\rho})}{2\sqrt{-\rho\hat{\rho}}}\right) + \frac{\pi}{2}\right).
\end{align}
If we ignore the absolute value in the integral on the left of \eqref{eq:intsetup} for the moment, and treat it as an indefinite integral, we can evaluate the resulting integral exactly -- after a partial fraction decomposition and the substitutions $w = u - \zeta$ and $w = u - \hat{\zeta}$ -- by using the same techniques as for the integral on the right (constant of integration omitted):
\begin{align}\label{eq:thedeets}
I_0 := \int&\frac{u-\frac{c}{b}}{(u-\zeta)(u-\hat{\zeta})\sqrt{(u-\rho)(u-\hat{\rho})}}\text{d}u \\
&= \frac{\zeta - \frac{c}{b}}{\zeta - \hat{\zeta}}\underbrace{\int\frac{1}{w\sqrt{(w-(\rho - \zeta))(w-(\hat{\rho}-\zeta))}}\text{d}w}_{:=I_1} + \frac{\hat{\zeta}-\frac{c}{b}}{\hat{\zeta}-\zeta}\underbrace{\int\frac{1}{w\sqrt{(w-(\rho-\hat{\zeta}))(w-(\hat{\rho}-\hat{\zeta}))}}\text{d}w}_{:=I_2}\nonumber\\
&= \frac{\zeta - \frac{c}{b}}{\zeta - \hat{\zeta}}\frac{1}{\sqrt{-(\rho - \zeta)(\hat{\rho} - \zeta)}}\left(\arctan\left(\frac{2(\rho-\zeta)(\hat{\rho}-\zeta)-(\rho + \hat{\rho}-2\zeta)w}{2\sqrt{-(\rho - \zeta)(\hat{\rho} - \zeta)}\sqrt{(w-(\rho - \zeta))(w-(\hat{\rho}-\zeta))}}\right)\right)\nonumber\\
&\hphantom{=}+\frac{\hat{\zeta} - \frac{c}{b}}{\hat{\zeta}-\zeta}\frac{1}{\sqrt{-(\rho - \hat{\zeta})(\hat{\rho} - \hat{\zeta})}}\left(\arctan\left(\frac{2(\rho-\hat{\zeta})(\hat{\rho}-\hat{\zeta})-(\rho + \hat{\rho}-2\hat{\zeta})w}{2\sqrt{-(\rho - \hat{\zeta})(\hat{\rho} - \hat{\zeta})}\sqrt{(w-(\rho - \hat{\zeta}))(w-(\hat{\rho}-\hat{\zeta}))}}\right)\right)\nonumber. 
\end{align}
In order to evaluate the integral on the left-hand side of \eqref{eq:intsetup} we need to evalaute $I_0$ over the interval $(\rho,c/b)$ where the integrand is negative and then over the interval $(c/b,\infty)$ where the integrand is positive. We see, however, that irrespective of the limits of integration for $I_1$, the contribution from this term will vanish as $q\rightarrow 0$ since the arctangent function is bounded and since $(\zeta - c/b)/\sqrt{\rho - \zeta}$ goes to zero with $q$ (fact (c) from above). Thus, we need only consider the integral $I_2$ over these intervals. It is easy to see that $I_2$ evaluated over $(\rho - \hat{\zeta},c/b - \hat{\zeta})$ (i.e. $(\rho,c/b)$ expressed using transformed variable $w = u - \hat{\zeta}$) will vanish with $q$, since since $c/b \rightarrow \rho$ as $q \rightarrow 0$. Using this fact again, and also the fact that $\hat{\zeta}\rightarrow 0$ as $q \rightarrow 0$, the value of $I_2$ over the interval $(c/b - \hat{\zeta},\infty)$ converges to \eqref{eq:whatwewant} as $q$ goes to 0, and so we have proven \eqref{eq:intsetup}.
\end{proof}
\begin{remark}
Note that since both the class of GGCs and the class of MEs are closed with respect to weak convergence (see Proposition 9.10 and Corollary 9.6 together with Thereom A.4 in \cite{SSV2012}), the distributions of both $S_{\infty}$ and $-I_{\infty}$ will be MEs, and they will also be GGCs if the measures $\tau_q^+$ and $\tau_q^-$ are positive for $q$ small enough. This means that in the remainder of this paper, we can treat the case $q=0$ in exactly the same way that we would treat the case $q > 0$. Thus, unless otherwise stated, the reader may assume that the notation $S_{\eq}$ and $I_{\eq}$ includes the case $q=0$, i.e. $S_{\infty}$ and $I_{\infty}$.\demo
\end{remark}
\noindent We have shown that $S_{\eq}$ and $-I_{\eq}$ always have a Laplace exponent of the 
\begin{align}\label{eq:genrep}
\psi(z) = \int_{\orp}\log\left(\frac{u}{u-z}\right)\tau(\text{d}u),
\end{align}
where $\tau$ is the signed measure from Corollaries \ref{cor:ggc}, \ref{cor:gegc}, and \ref{cor:q0}
, which is derived from some linear combination of the measures \eqref{eq:themus}, \eqref{eq:thenus}, and \eqref{eq:thelams}, and the Dirac delta measure. At this point we have strong evidence that the distributions of $S_{\eq}$ and $-I_{\eq}$ are not GGCs whenever $\tau$ is not a positive measure. The following final corollary for this section confirms this assumption.
\begin{corollary}\label{cor:notggc}
Let $Y$ denote either $S_{\eq}$ or $-I_{\eq}$ with Laplace exponent $\psi(z)$ and associated measure $\tau$ as described in \eqref{eq:genrep}. If $\tau$ is not a positive measure, then the distribution of $Y$ is not a GGC.
\end{corollary}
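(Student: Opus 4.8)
The plan is to argue by contradiction, playing the explicit formula for the Lévy density of $Y$ furnished by Theorem \ref{theo:main} against the Thorin representation that Theorem \ref{theo:bondy} would force upon it, and then to close the argument with uniqueness of the Laplace transform. First I would record the content of Theorem \ref{theo:main}: in every case of Table \ref{tab:meas} the variable $Y$ (being $S_{\eq}$ or $-I_{\eq}$, hence a positive, infinitely divisible random variable with neither drift nor Gaussian component) has an absolutely continuous Lévy measure with density $\pi_Y$ satisfying
\[
x\,\pi_Y(x)=\int_{\orp}e^{-xu}\,\tau(\d u),\qquad x>0,
\]
where $\tau=\tau_q^+=\omega_q^+$ when $Y=S_{\eq}$ and $\tau=\tau_q^-=-_*\omega_q^-$ when $Y=-I_{\eq}$ (in the latter case one substitutes $u\mapsto -u$ in the representation of $\pi_q^-$, exactly as in the proof of Corollary \ref{cor:ggc}). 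Inspecting \eqref{eq:themus}, \eqref{eq:thenus} and \eqref{eq:thelams}, each of $\mu_q^{\pm}$, $\nu_q^{\pm}$, $\lambda_q^{\pm}$ has a density that is $O(u^{-2})$ as $u\to\infty$ and only an integrable square-root singularity at $\rho$ (resp.\ $-\hat{\rho}$); hence, together with the finitely many Dirac masses, $\tau$ is a \emph{finite} signed measure supported on a subset of $\orp$.

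Next I would suppose, for contradiction, that the law of $Y$ is a GGC. By Theorem \ref{theo:bondy}, $Y$ is then infinitely divisible and its Lévy measure has a density $\tilde\pi$ with $x\tilde\pi(x)$ completely monotone and $\tilde\pi(x)=\frac{1}{x}\int_{\orp}e^{-xu}\,\tau'(\d u)$ for a \emph{positive} Thorin measure $\tau'$. Since the Lévy measure of $Y$ is determined by its law, $\tilde\pi=\pi_Y$ Lebesgue-almost everywhere on $\orp$, and, both sides being continuous in $x$,
\[
\int_{\orp}e^{-xu}\,\tau(\d u)=\int_{\orp}e^{-xu}\,\tau'(\d u),\qquad x>0,
\]
with both integrals finite for every $x>0$ (the left one because $\tau$ is finite, the right one because it equals the finite left one and $\tau'\ge 0$). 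Fixing $x_0>0$, the measure $e^{-x_0 u}(\tau-\tau')(\d u)$ is then a finite signed measure on $\orp$; pushing it forward under $u\mapsto e^{-u}\in(0,1)$ produces a finite signed Borel measure supported in $(0,1)$ all of whose moments vanish (the $n$-th moment is the value at $x=x_0+n$ of the displayed identity). By the Weierstrass approximation theorem this measure is zero, so $\tau=\tau'$; but then $\tau$ is a positive measure, contradicting the hypothesis. Hence the law of $Y$ is not a GGC.

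I expect the only delicate point to be the uniqueness-of-Laplace-transform step, where one of the two measures ($\tau$) is signed while the other ($\tau'$) is a priori only a $\sigma$-finite positive Thorin measure. As indicated above, this is handled cleanly: because the Laplace integrals converge for all $x>0$, multiplying through by $e^{-x_0 u}$ and changing variables reduces the matter to uniqueness of a \emph{finite} measure on a compact interval determined by its moments, which is classical. Everything else is routine bookkeeping with Theorem \ref{theo:main} and Theorem \ref{theo:bondy}, the $-I_{\eq}$ case being handled verbatim.
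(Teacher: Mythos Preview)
Your proof is correct and takes a genuinely different route from the paper's. Both arguments proceed by contradiction and hinge on the fact that the measure $\tau$ can be recovered uniquely from a suitable integral transform, but they use different transforms.

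The paper works on the level of the cumulant generating function: if $Y$ were a GGC with Thorin measure $\tau'$, then Lemma~\ref{lem:ggcstiel} gives $\psi'(z)=\int_{\orp}(u-z)^{-1}\tau'(\d u)$, while Corollary~\ref{cor:gegc} already gives $\psi'(z)=\int_{\orp}(u-z)^{-1}\tau(\d u)$. The paper then invokes the Stieltjes inversion formula for Pick functions (via \cite{Bondesson}, Theorem~2.4.1) to show that the boundary values $\lim_{y\downarrow 0}\pi^{-1}\im\psi'(x+\iota y)$ recover the measure on any interval $(a,b)$, forcing $\int_{(a,b)}\tau'(\d u)=\int_{(a,b)}\tau(\d u)<0$ for some $(a,b)$ on which $\tau$ is negative.

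You instead work on the level of the L\'evy density: Theorem~\ref{theo:main} (together with the Frullani argument behind Corollary~\ref{cor:gegc} for the $q=0$ case) gives $x\pi_Y(x)=\int_{\orp}e^{-xu}\tau(\d u)$, while Theorem~\ref{theo:bondy} gives $x\pi_Y(x)=\int_{\orp}e^{-xu}\tau'(\d u)$, and you conclude $\tau=\tau'$ by a moment/Weierstrass uniqueness argument for Laplace transforms. This is arguably more elementary, since it avoids any appeal to the theory of Pick functions; the only care needed, which you handle correctly, is that $\tau'$ is a priori only $\sigma$-finite, so one first multiplies by $e^{-x_0u}$ to obtain finite measures before applying the moment argument. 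One small remark: your opening sentence cites Theorem~\ref{theo:main} and Table~\ref{tab:meas}, which strictly cover $q>0$; for $q=0$ you should point instead to Corollary~\ref{cor:q0}, but the displayed identity $x\pi_Y(x)=\int e^{-xu}\tau(\d u)$ holds there as well and the rest of your argument goes through verbatim.
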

\begin{proof}
We assume that the distribution of $Y$ is a GGC and that $\tau$ is not a positive measure. Therefore, $\tau$ must be a finite, signed measure such that there exist $0 < a < b < \infty$ for which $\int_{(a,b)}\tau(\text{d}u)<0$ and $\tau(\{a\}) = \tau(\{b\}) = 0$. Now we apply Lemma \ref{lem:ggcstiel}, which guarantees that $\psi'(z)$ has an analytic continuation of the form
\begin{align*}
\psi'(z) = \int_{\orp}\frac{\mu(\text{d}u)}{u-z},\quad z \in \mathbb{C}\backslash\crp
\end{align*}
for some positive measure $\mu$. The measure $\mu$ is uniquely determined by the function $\psi'(z)$ (by virtue of the fact that $\psi'(z)$ is a Pick function; see discussion top of pg. 30 and Theorem 2.4.1 in \cite{Bondesson}). In particular,
\begin{align}\label{eq:limmeas}
\lim_{y \downarrow 0} \frac{1}{\pi}\int_a^b\im\left(\psi'(x + \iota y)\right)\text{d}x = \int_{(a,b)}\mu(\text{d}u).
\end{align}
Expanding the left-hand side of \eqref{eq:limmeas} we get
\begin{align}\label{eq:bitingbond}
\frac{1}{\pi}\int_a^b\im\left(\psi'(x + \iota y)\right)\text{d}x  &= \frac{1}{\pi}\int_{a}^b\int_{\orp}\frac{y}{(u-x)^2 + y^2}\tau(\text{d}u)\text{d}x \\ &= \int_{\orp}\frac{1}{\pi}\left(\arctan\left(\frac{b-u}{y}\right) - \arctan\left(\frac{a-u}{y}\right)\right)\tau(\text{d}u),
\end{align}
where the interchange in the order of integration in the second equality in \eqref{eq:bitingbond} is justified by the fact that $\frac{y}{(u-x)^2 + y^2}$ is a bounded, positive function for each $y > 0$ and an argument identical to the one used in the proof of Corollary \ref{cor:gegc}. Now, the integrand on the right-hand side of \eqref{eq:bitingbond} is bounded by one, and, in fact, converges to one as $y$ approaches zero for $a < u < b$. For $u \notin [a,b]$ the integrand converges to zero. Applying the Dominated Convergence Theorem then shows that 
\begin{align*}
\int_{(a,b)}\mu(\text{d}u) = \int_{(a,b)}\tau(\text{d}u) < 0,
\end{align*}
which is a contradiction, since $\mu$ is a positive measure.
\end{proof}
\begin{remark}
Since $\text{ME} \not\subset \text{GGC}$ (and also $\text{GGC} \not\subset \text{ME}$) the result of Corollary \ref{cor:notggc} is not really surprising. However, the results of this section raise a potentially interesting avenue of further research, namely to attempt to define the class of probability distributions whose CGF has the form \eqref{eq:genrep}. The potentially difficult part of this exercise, is to settle on the proper definition of the ``Thorin" measure for this class, as it is easy to leave the realm of viable CGFs by a poor choice of signed measure. Additionally, while the literature on finite signed or complex measures is well developed, the literature on measures with infinite total variation is somewhat more limited, indicating the fact that working with such measures is more difficult. Ideally, we would like our class of distributions to include the class of GGCs, which would require at least some of the measures to have infinite total mass. The result of Corollary \ref{cor:notggc} is also interesting in the sense that although the NIG distribution is an extended generalized gamma convolution (EGGC), essentially a GGC extended to the real line (see Chapter 7 in \cite{Bondesson}), the Wiener-Hopf factors of the NIG process are not generally MGFs of GGCs.
\end{remark}
\section{Technical Details of the Approximation Algorithm}\label{sect:techdets}
In what follows, let $X$ denote an NIG process and $Y$ denote either $S_{\eq}$ or $-I_{\eq}$. Further let $\varphi(z) = \e[e^{zY}]$ and $\psi(z) = \log(\varphi(z))$. We have seen (Corollaries \ref{cor:gegc} and \ref{cor:q0}) that $\psi(z)$ has the form
\begin{align}
\psi(z) = \int_{\orp}\log\left(\frac{x}{x-z}\right)\tau(\text{d}x), \quad \re(z) \leq R,
\end{align}
where $\tau$ is a finite, possibly signed measure on $\orp$, which assigns no mass to a non-empty interval $(0,R)$. From Proposition \ref{prop:ggc} and Corollary \ref{cor:ggc} we know that if $\tau$ is a positive measure, then the $[n-1/n]$ Pad\'{e} approximant of $\psi'(z)$ can be used to construct a function, which is the MGF of an $n$-fold convolution of gamma distributions and matches the first $2n-1$ moments of the distribution of $Y$. Further we know from Theorem \ref{theo:rog}, Proposition \ref{prop:me}, and the fact that $X \in \mathcal{CM}$ that regardless of whether or not $\tau$ is positive, the $[n-1/n]$ Pad\'{e} approximant of $\varphi(z)$ is the MGF of a finite mixture of exponentials, which also matches the first $2n-1$ moments of the distribution of $Y$. Thus we have potentially two approaches for approximation, which depend on the Taylor series expansion of either $\psi'(z)$ or of $\varphi(z)$. \\ \\
\noindent Regardless of whether or not $\tau$ is positive, we may readily show that $\psi(z)$ is analytic near zero and that we may repeatedly differentiate under the integral sign, such that
\begin{align*}
\psi^{(k)}(z) = (k-1)!\int_{[R,\infty)}\frac{\tau(\text{d}x)}{(x - z)^k},\quad k \geq 1,
\end{align*} 
for $z$ near zero. Thus, $\psi'(z)$ has the following Taylor series expansion at zero
\begin{align*}
\psi'(z) = \sum_{k=0}^{\infty}m_{k+1}z^k, \quad \text{where}\quad m_k := \frac{\psi^{(k)}(0)}{(k-1)!} = \int_{[R,\infty)}x^{-k}\tau(\text{d}x).
\end{align*}
We see that $\{m_k\}_{k \geq 1}$ are simply the negative moments of the measure $\tau$ and that these are related to the cumulants $\{\kappa_k\}_{k\geq 1}$ of the distribution of $Y$ by the relation $\kappa_k = (k-1)!m_k$. If $\{\mu_k\}_{k\geq1}$ are the moments of the distribution of $Y$, then we also have
\begin{align*}
\varphi(z) = \sum_{k=0}^{\infty}\frac{\mu_k}{k!}z^k, \quad \text{where}\quad \mu_0 = 1,\,\mu_1 = m_1,\,\text{and }\,\mu_k = (k-1)!m_k + \sum_{j=1}^{k-1}\binom{k-1}{j-1}(j-1)!m_j\mu_{k-j},
\end{align*}
for $k \geq 2$, which follows from the well known relationship between moments and cumulants. We see that our approximation depends only on our ability to compute the negative moments of $\tau$.
\subsection{Computing the negative moments of $\tau$}
Conveniently, we can compute the negative moments of $\tau$ exactly, i.e. without resorting to numerical integration. Recalling that $\tau$ is a stand-in for the measures $\omega_q+$, and $-_*\omega_q^-$ and consulting Table \ref{tab:meas} along with Formulas \ref{eq:themus} through \ref{eq:thelams}, we observe that the challenging part of computing the negative moments of $\tau$ is computing an integral whose general form is
\begin{align*}
I_{i,\,j,\,k} := \int_{R}^{\infty}\frac{Ax + B}{x^k(x-D)^{i}(x-E)^{j}\sqrt{(x-C)(x-R)}}\text{d}x,
\end{align*}
where $k \in \mathbb{N}$, $i,\,j \in \{0,1\}$, $A,\,B,\,C,\,R \in \mathbb{R}$ such that: $Ax+B \not\equiv 0$, $C < 0 < R$, and $C < D \leq E < R$ whenever $D$ and $E$ are both real. In particular, for $D\,,E \in \mathbb{R}$ we have $(C - D)(R - D) < 0$, and $(C-E)(R-E)<0$. Further, $D$ and $E$ are either both real or both have nonzero imaginary part; in the latter case we have $D = \bar{E}$. \\ \\
\noindent The approach to computing $I_{i,\,j,\,k}$ is simply to recognize that
\begin{align*}
I_{i,\,j,\,k} = \int_{R}^{\infty}\frac{W(x)}{\sqrt{P(x)}}\text{d}x,
\end{align*}
where $W(x)$ is a rational function and $P(x) := (x-C)(x - R)$. It is always possible to reduce this rational function via partial fraction decomposition into a sum of integrals of the form
\begin{align}\label{eq:easyint}
\int_{R}^{\infty}\frac{K}{(x-J)\sqrt{P(x)}}\text{d}x,
\end{align}
for constants $K$ and $J \in \{0,\,C,\,D\}$. If $J$ is real, then the integrals \eqref{eq:easyint} can be computed exactly via the following identities (see Formulas 2.266, 2.268, 2.269.1-2 in \cite{Jeffrey2007}), where $Q(x) = a + bx + cx^2$, $a < 0$, and the integrals are intended in the indefinite sense; the constant of integration is omitted:
\begin{align}\label{eq:theintapp}
L_1 &= \frac{1}{\sqrt{-a}}\arctan\left(\frac{2a + bx}{2\sqrt{-a}\sqrt{Q(x)}}\right),\nonumber\\
L_2 &= -\frac{\sqrt{Q(x)}}{ax} - \frac{b}{2a}J_1,\\
L_k &= -\frac{\sqrt{Q(x)}}{(k-1)ax^{k-1}} - \frac{(2k-3)b}{2(k-1)a}J_{k-1} - \frac{(k-2)c}{(k-1)a}J_{k-2},\quad k \geq 3, \nonumber
\end{align}
where
\begin{align*}
L_{k} := \int\frac{1}{x^k\sqrt{Q(x)}}\text{d}x,\quad k\in \mathbb{N}. 
\end{align*}
If $J$ is not real, then we require a different approach, which we demostrate in the following example, in which we show how to compute the most challenging version of $I_{i,\,j,\,k}$.
\begin{example}
Let
\begin{align*}
I_k := \int_{R}^{\infty}\frac{Ax + B}{x^k(x-D)(x-E)\sqrt{(x-C)(x-R)}}\text{d}x,
\end{align*}
and expand the rational portion of the integrand as a partial fraction, such that
\begin{align}\label{eq:intbas}
I_k = \sum_{j=1}^k\int_R^{\infty}\frac{a_{1,k-j}}{x^j\sqrt{P(x)}}\text{d}x + \int_{R}^{\infty}\frac{a_{2,1}x + a_{2,0}}{(x -D)(x-E)\sqrt{P(x)}}\text{d}x,
\end{align}
where
\begin{gather*}
a_{2,0} = a_{1,k-1}(D+E) - a_{1,k-2},\quad a_{2,1} = -a_{1,k-1},\\
a_{1,j} = \frac{a_{1,j-1}(D+E) - a_{1,j-2}}{DE},\quad 2 \leq j \leq k-1, \quad\text{ and }\\
a_{1,1} = \frac{A}{DE} + \frac{B(D+E)}{(DE)^2}\quad a_{1,0} = \frac{B}{DE}.
\end{gather*}
Note that the above recursion can also be solved explicitly, but for computational purposes the recursion will be faster. With this decomposition, we recognize that the integrals in the first term on the right-hand side of \eqref{eq:intbas} can be calculated using \eqref{eq:theintapp} directly. The method of computing the second integral on the right-hand side of \eqref{eq:intbas} depends on the values of $D$ and $E$. If $D$ and $E$ are real and $D\neq E$, then we must do one more partial fraction expansion in the integral on the right in \eqref{eq:intbas}. The substitutions $x \mapsto x + D$ and $x \mapsto x + E$ in the resulting integrals, combined with the fact that $P(D) = (C-D)(R-D) < 0$ and $P(E) = (C-E)(R-E) < 0$ and \eqref{eq:theintapp} allow for exact evaluation of $I_k$. Note that if $D=E$, then we can omit the partial fraction decomposition and use \eqref{eq:theintapp} directly. \\ \\
\noindent If $D$ and $E$ are complex we proceed analogously with one further partial fraction expansion plus one additional step, namely the Euler substitution $P(x) = x + t$. This transforms the second integral in \eqref{eq:intbas} into two integrals with rational integrands, in particular
\begin{align}\label{eq:finalint}
\int_{R}^{\infty}\frac{1}{(x-D)\sqrt{P(x)}}\text{d}x = \int_{-R}^{-\frac{C+R}{2}}\frac{2}{t^2 + 2Dt + C(D-R) + DR}\text{d}t = \int_{-R}^{-\frac{C+R}{2}}\frac{2}{(t - r^+)(t-r^-)}\text{d}t,
\end{align}
where
\begin{align*}
r^{\pm} := -D \pm \sqrt{(D-C)(D-R)}.
\end{align*}
By expressing $C$ and $D$ in terms of $\rho$ and $\hat{\rho}$ we can show that the interval $(-R,-\frac{1}{2}(C+R))$ is never empty, and from our assumptions that $D \in \mathbb{C}\backslash\mathbb{R}$, we can show that the function $(t-r^+)(t-r^-)$ has no roots in $\mathbb{R}$. Therefore the integral on the right of \eqref{eq:finalint} is easily evaluated exactly as
\begin{align*}
\frac{2}{r^+ - r^-}\left(\log\left(-\frac{C+R}{2} - r^+\right) + \log\left(-R - r^-\right) - \left(\log\left(-\frac{C+R}{2} - r^-\right) + \log\left(-R - r^+\right)\right)\right)
\end{align*}
Of course, the same approach works also for $E$.\rdemo
\end{example}
\section{Examples and Applications}\label{sect:exandapp}
In this section $X$ refers to a NIG process as do the random variables $S_{\eq}$, and $I_{\eq}$.\\ \\
\noindent We will denote the random variable whose distribution approximates the distribution of $S_{\eq}$ (resp. $-I_{\eq}$) with an $n$-fold convolution of gamma distributions by $\overline{G}_{n}$  (resp. $\underline{G}_{n}$). The notation $\varphi_{\overline{G}_n}(z)$ and $\psi_{\overline{G}_n}(z)$ is used for the MGF and CGF of $\overline{G}_{n}$ respectively, which, we recall, can both be derived via the $[n-1/n]$ Pad{\'e} Approximant of $(\psi_q^+)'(z)$ using the result of Proposition \ref{prop:ggc}. These will have the form
\begin{align*}
\varphi_{\overline{G}_n}(z) = \prod_{i=1}^{n}\left(1 - \frac{z}{\beta_i}\right)^{-\alpha_i}, \quad \psi_{\overline{G}_n}(z) = \log\left(\varphi_{\overline{G}_n}(z)\right),
\end{align*}
for positive constants $\{\alpha\}_{1\leq i \leq n}$ and $\{\beta\}_{1\leq i \leq n}$, which are easily extracted from the $[n-1/n]$ Pad{\'e} Approximant of $(\psi_q^+)'(z)$ by a partial fraction decomposition. The cumulative distribution function (CDF) of $\overline{G}_{n}$ is denoted $F_{\overline{G}_{n}}(x)$. We adopt the analogous notation for the MGF, CGF and CDF of $\underline{G}_{n}$.\\ \\
\noindent The random variable corresponding to the approximation based on a mixture of $n$ exponential distributions will be denoted $\overline{E}_{n}$ (resp. $\underline{E}_{n}$). The notation $\varphi_{\overline{E}_n}(z)$ and $\psi_{\overline{E}_n}(z)$ will be used for the MGF and CGF of $\overline{E}_{n}$, which are derived via the $[n-1/n]$ Pad{\'e} Approximant of $\varphi_q^+(z)$ using the result of Proposition \ref{prop:me}. These will have the form
\begin{align*}
\varphi_{\overline{E}_n}(z) = \sum_{i=1}^{n}\frac{\eta_i\omega_i}{\eta_i - z}, \quad \psi_{\overline{E}_n}(z) = \log\left(\varphi_{\overline{E}_n}(z)\right),
\end{align*}
where $\{\eta_i\}_{1\leq i \leq n}$ and $\{\omega_i\}_{1\leq i \leq n}$ are again positive constants obtained from the partial fraction decomposition of the $[n-1/n]$ Pad\'{e} Approximant of $\varphi^+_q(z)$. The CDF of $\overline{E}_n$ will be denoted $F_{\overline{E}_n}(x)$; analogous notation will be used for the MGF, CGF, and CDF of $\underline{E}_{n}$.
\subsection{Approximation of the CDF Two Ways}
\subsubsection{Cumulant Check}\label{subsubsec:cumchk}
As an initial test of the results of Section \ref{sect:main} we consider the cumulants of $X_{\eq}$, which we can calculate exactly via the CGF $\psi_{X_{\eq}}(z) := \log\left(q/(q - \psi_X(z))\right)$ whenever $q > 0$. Via the CGFs $\psi_q^+(z)$ and $\psi_q^-(z)$, derived in Corollaries \ref{cor:ggc} and \ref{cor:gegc}, and the methods of Section \ref{sect:techdets} we can also calculate the cumulants of $S_{\eq}$ and $-I_{\eq}$ exactly. If the results are correct, then the following relationship must hold for all $k \in \mathbb{N}$:
\begin{align}\label{eq:fundcheck}
\left.\frac{\text{d}^k}{\text{d}z^k}\psi_{X_{\eq}}(z) \right\vert_{z=0} =
\left. \frac{\text{d}^k }{\text{d}z^k}\left(\psi_q^+(z) + \psi_q^-(z)\right) \right\vert_{z=0}= \left. \frac{\text{d}^k }{\text{d}z^k}\left(\psi_q^+(z) + (-1)^k\psi_q^-(-z)\right) \right\vert_{z=0}.
\end{align}
Additionally, by construction, the relationship must also hold up to $k \leq 2n-1$ for the approximations based on the $[n-1/n]$ Pad\'{e} Approximant, i.e. we must also have
\begin{align}\label{eq:gamcheck}
\left.\frac{\text{d}^k}{\text{d}z^k}\psi_{X_{\eq}}(z) \right\vert_{z=0} = 
\left. \frac{\text{d}^k }{\text{d}z^k}\left(\psi_{\overline{G}_n}(z) + (-1)^k\psi_{\underline{G}_n}(z)\right) \right\vert_{z=0},\quad 1 \leq k \leq 2n-1,
\end{align}
whenever approximation by a gamma convolution is applicable. Additionally,
\begin{align}\label{eq:mecheck}
\left.\frac{\text{d}^k}{\text{d}z^k}\psi_{X_{\eq}}(z) \right\vert_{z=0}  = 
\left. \frac{\text{d}^k }{\text{d}z^k}\left(\psi_{\overline{E}_n}(z) + (-1)^k\psi_{\underline{E}_n}(z)\right) \right\vert_{z=0},\quad 1 \leq k \leq 2n-1
\end{align}
must always hold for the ME approximation. Note that while the identities \eqref{eq:fundcheck}, \eqref{eq:gamcheck}, and \eqref{eq:mecheck} are theoretically exact the degree of precision to which \eqref{eq:gamcheck}, and \eqref{eq:mecheck} hold when actually computed may depend on the level of precision we use in deriving the Pad\'{e} Approximants (see discussion at the end of Section \ref{sect:padstiel}).\\ \\
\noindent For this example, we consider the parameter set 
\begin{align*}
(\theta,\mu,\kappa,\sigma,q) = \left(-1,-4,\frac{187}{64},1,1\right),
\end{align*}
since, in this case, both the approximation by gamma convolution and by finite exponential mixture is applicable. In Table \ref{tab:check} we compute identities \eqref{eq:fundcheck} - \eqref{eq:mecheck} using this parameter set. The left-hand side of \eqref{eq:fundcheck} - \eqref{eq:mecheck} up to $k=10$ is displayed in the column labeled $\psi_{X_{\eq}}^{(k)}(0)$. In column \emph{Exact}, we compute the right-hand side of \eqref{eq:fundcheck}. As expected, the values match those in column $\psi_{X_{\eq}}^{(k)}(0)$ exactly. In column \emph{G 500} , we compute the right-hand side of \eqref{eq:gamcheck} for the random variables $\overline{G}_{5}$ and $\underline{G}_{5}$ using 500 digit precision to calculate the Pad\'{e} Approximant. We do the same for the right-hand side of \eqref{eq:mecheck} using the random variables $\overline{E}_{5}$ and $\underline{E}_{5}$ in column\emph{ E 500}. Note, we only use higher precision arithmetic to compute the Pad\'{e} Approximants, all other values are computed using 17 digits of precision. We see that at this level of precision, the approximations also satisfy the identities exactly. On a laptop with 4GB of memory and an Intel i5 CPU @ 2.27 GHz the entire computation to derive the parameters that define the distribution of $\overline{G}_5$ -- i.e. calculating the Taylor Series expansion of $(\psi^+_q)'(z)$, deriving the Pad\'{e} Approximation from this series, completing a partial fraction decomposition of the resulting function to isolate the parameters of the $n$-fold gamma convolution -- takes approximately 0.3 seconds. The same is true also for $\underline{G}_{5}$, $\overline{E}_{5}$ and $\underline{E}_{5}$. The code for this example is written using Mathematica.
\renewcommand{\arraystretch}{1.25}

\begin{table}
\begin{center}
\begin{small}
\begin{tabular}{ | c || c | c | c | c | c | c |}
\hline
$k$ & $\psi_{X_{\eq}}^{(k)}(0)$ & Exact & G 500 & E 500 \\
\hline\hline
1&-5.0000000000000000&-5.0000000000000000&-5.000000000000000&-5.000000000000000  \\
\hline
2&28.921875000000000&28.921875000000000&28.92187500000000&28.92187500000000 \\
\hline
3&-343.20581054687500&-343.20581054687500&-343.20581054687500&-343.20581054687500 \\
\hline
4&6196.8737068176270&6196.8737068176270&6196.8737068176270&6196.8737068176270 \\
\hline
5&-150452.69069820643&-150452.69069820643&-150452.69069820643&-150452.69069820643 \\
\hline
6&4.5921017309017433E6&4.5921017309017433E6&4.5921017309017433E6& 4.5921017309017433E6\\
\hline
7&-1.6888501187015734E8&-1.6888501187015734E8&-1.6888501187015734E8&-1.6888501187015734E8 \\
\hline
8&7.2689737036613218E9&7.2689737036613218E9&7.2689737036613218E9&7.2689737036613218E9 \\
\hline
9&-3.5843731491371288E11&-3.5843731491371288E11&-3.5843731491371288E11&-3.5843731491371288E11 \\
\hline
\end{tabular}
\end{small}
\caption{Computational results of identities \eqref{eq:fundcheck} - \eqref{eq:mecheck}}\label{tab:check}
\end{center}
\end{table}
\subsubsection{The CDF}
Continuing with the example in Section \ref{subsubsec:cumchk} we now consider the CDFs of $S_{\eq}$ and $-I_{\eq}$ ($F_{S_{\eq}}(x)$ and $F_{-I_{\eq}}(x)$ respectively) and the associated approximations. In Figure \ref{fig:cdfimage} we plot the $F_{S_{\eq}}(x)$ (blue $+$) and $F_{-I_{\eq}}(x)$ (green $\times$) as generated by a Monte Carlo simulation. That is, we simulate the exponential random variable $\eq$ and the discretize the interval $(0,\eq)$ using step sizes of $10^{-3}$. The random variables $X_{0.001}$ are simulated using the technique described in \cite{tinaturner} and summed to generate a path; the maximum (resp. minimum) along this path is taken as an approximation of a realization of $S_{\eq}$ (resp. $I_{\eq}$). The process is repeated $10^6$ times to generate an empirical CDF. \\ \\
\noindent Additionally plotted in Figure \ref{fig:cdfimage} are $F_{S_{\eq}}(x)$ (red $\circ$) and $F_{-I_{\eq}}(x)$ (fuchsia $\square$) as derived by numerical inversion of the Laplace Transforms $\varphi_q^{\pm}(z)$. The general technique we use for numerical inversion is described in Appendix A of \cite{lognorm}. Note, that for numerical inversion we are required to evaluate $\varphi_q^{\pm}(z)$ along a line $C + \iota\mathbb{R}$ in the complex plane, which means that we need to evaluate integrals of the form
\begin{align}\label{eq:lapint}
\int_{R}^{\infty}\log\left(\frac{u}{u-z}\right)\frac{Au + B}{(u-D)(u-E)\sqrt{(u-C)(u-R)}}\text{d}u,
\end{align}
numerically for $z$ on this line. This, however, does not pose a serious challenge: we make a change of variables $u \mapsto 2Ru^{-1} - 1$, such that the interval of integration becomes $(-1,1)$ and then use the Tanh-Sinh Quadrature as described in \cite{dbailes} (precision $p = 60$ and step size $h = 2^{-7}$ in the notation of \cite{dbailes}).\\ \\
In Figure \ref{fig:cdfimage2} along with $F_{S_{\eq}}(x)$ (red $\circ$) and $F_{-I_{\eq}}(x)$  (fuchsia $\square$) as computed by numerical inversion of the Laplace transform, we also plot $F_{\overline{E}_5}(x)$ (blue $+$) and $F_{\underline{E}_5}(x)$ (green $\times$). In Figure \ref{fig:cdfimage3} we make the same comparison using $F_{\overline{G}_5}(x)$ (blue $+$) and $F_{\underline{G}_5}(x)$ (green $\times$). Note that in the latter case, we also generate the CDF via numerical inversion of the Laplace transform -- this is much easier than computing $F_{S_{\eq}}(x)$ and $F_{-I_{\eq}}(x)$, however, since we do not have to calculate the integral \eqref{eq:lapint} at every step of the algorithm -- while in the former case we have explicit expressions for $F_{\overline{E}_5}(x)$ and $F_{\underline{E}_5}(x)$, e.g.
\begin{align*}
F_{\overline{E}_5}(x) = \sum_{i=1}^{5}\omega_i\left(1 - e^{-\eta_ix}\right).
\end{align*}
\\ \\
Thus from Figure \ref{fig:cdfimage} we get a numerical validation of our theoretical results and from Figures \ref{fig:cdfimage2} and \ref{fig:cdfimage3} we get a sense that both of our approximations work well, even when we employ a relative low degree Pad\'{e} approximant. To get a better sense of the quality of the approximation along the steep part of the CDF of $S_{\eq}$ near 0, which is not captured in Figures \ref{fig:cdfimage} - \ref{fig:cdfimage3}, we also plot the errors between $F_{S_{\eq}}(x)$ as computed by numerical Laplace inversion and $F_{\overline{E}_5}(x)$ (maroon $+$), $F_{\overline{E}_{10}}(x)$ (blue $+$), $F_{\overline{E}_{13}}(x)$ (green $+$), $F_{\overline{E}_{25}}(x)$ (purple $+$) and $F_{\overline{G}_5}(x)$ (black $\circ$) over the interval $(0.005,0.05)$ in Figure \ref{fig:cdfimage4}. We see that the lower degree ME approximations do not perform as well here and, depending on our desired level of accuracy, we may wish to choose a higher degree approximation (although $n=13$ already yields errors smaller than 0.005). By contrast, the fifth degree GGC approximation continues to perform well, which may justify the additional numerical effort required to compute the CDF in this case.
\begin{figure}[!htb]
\centering
\subfloat[]
{\label{fig:cdfimage}\includegraphics{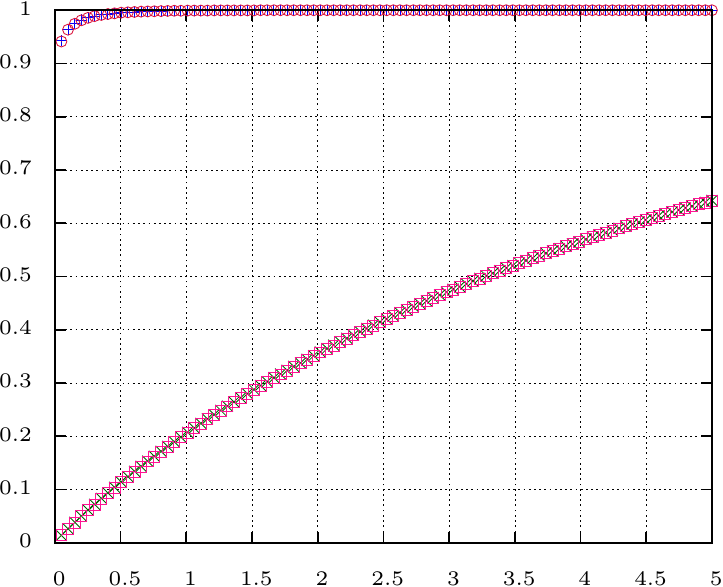}}
\subfloat[]
{\label{fig:cdfimage2}\includegraphics{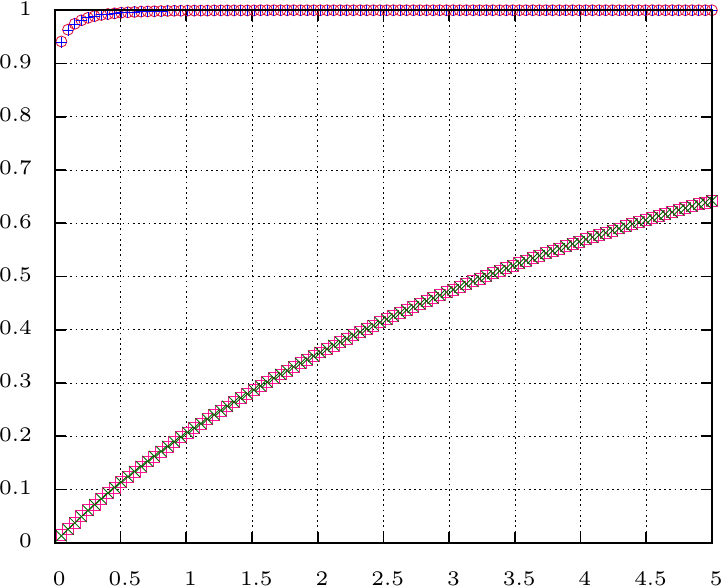}}\\
\subfloat[]
{\label{fig:cdfimage3}\includegraphics{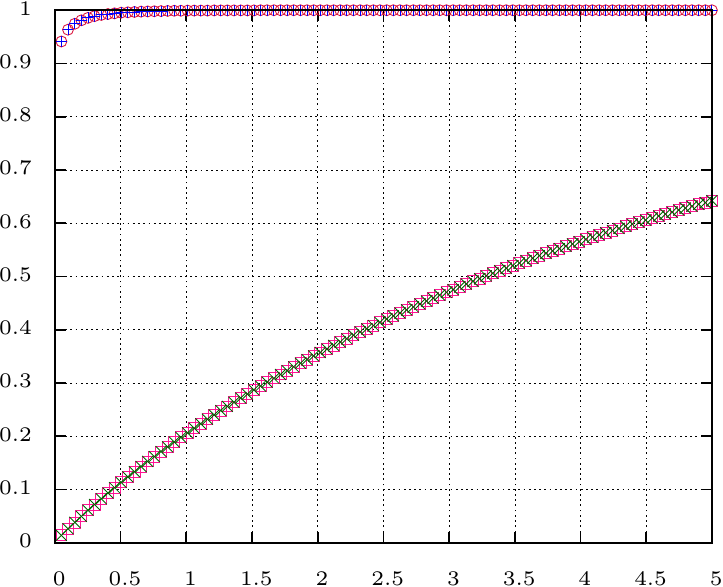}}
\subfloat[]
{\label{fig:cdfimage4}\includegraphics{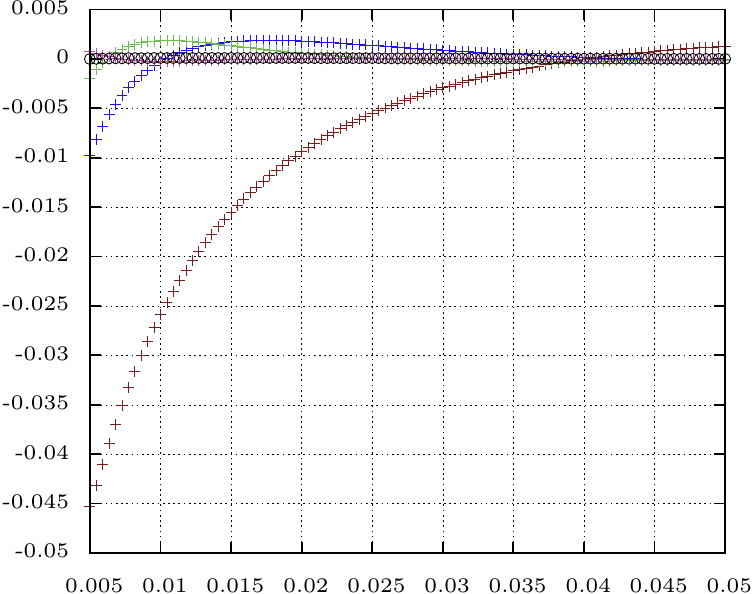}}
\caption{CDFs in (a) - (c), Errors in (d)}
\end{figure}
\subsection{Ruin Probabilities}\label{sect:ruin}
A simple application of the results to a financial problem, is to consider ruin probabilities. That is, we suppose that the capital of a company at time $t$ is modeled by $K_t = x + X_t$, where $x > 0$ is the initial capital and $X$ is a NIG process. The probability of ruin is then given by
\begin{align*}
R(x) := \p\left(\inf_{t > 0} K_t < 0\right) = \p(-I_{\infty} > x).
\end{align*}
Of course, this problem only makes sense when $-I_{\infty}$ is a.s. finite, i.e. when $\mu + \theta > 0$, a condition we assume from now on. The asymptotics of the ruin probability have been studied extensively in the context of insurance companies where $X$ was initially modeled as a compound Poisson process with only negative, exponentially distributed jumps. In this setting it was found that if \emph{Cram{\'e}r's condition} is satisfied, i.e. if $\psi_X(z)$ has a negative, real root $-\gamma$, then $R(x) \sim Ce^{-\gamma x}$, where $C$ is an explicitly defined constant. Doney and Bertoin \cite{donber} generalized this result, i.e. they showed the same asymptotics also apply when $X$ is a L\'evy process satisfying Cram\'er's condition plus an additional technical condition -- specifically that 0 is regular for $(0,\infty)$ -- which is satisfied by all NIG processes. A limiting formula for $C$ in this general setting was derived by Mordecki \cite{mordor}, who showed that
\begin{align}\label{eq:c}
C = \frac{\iota}{\gamma}\lim_{z \uparrow \iota\gamma}(z - \iota\gamma)\varphi_0^-(\iota z).
\end{align}
For the NIG process, with $-\gamma = \hat{\zeta}$ and the results of Corollary \ref{cor:q0}, we may then use \eqref{eq:c} to get an explicit representation of the asymptotics of $R(x)$, i.e. when $X$ is a NIG process satisfying $\mu + \theta > 0$ and Cram{\'e}r's condition we have
\begin{align}\label{eq:ruin}
R(x) \sim Ce^{\hat{\zeta} x},\quad\text{where} \quad C = \exp\left(\int_{\orp}\log\left(\frac{u}{u + \hat{\zeta}}\right)\tau^-_0(\text{d} u)\right),
\end{align}
and $\tau_0^-$ is the appropriate measure from Corollary \ref{cor:q0}. The integral in \eqref{eq:ruin} is easily evaluated numerically, for example via the Tanh-Sinh Quadrature (see \eqref{eq:lapint} and the discussion thereafter).\\ \\
\noindent Of course, we can also approximate $R(x)$ by deriving $F_{\underline{E}_n}(x)$ (or $F_{\underline{G}_n}(x)$ when appropriate, although in this case the ME representation seems more convenient). In particular,
\begin{align*}
R(x) \approx  1 - F_{\underline{E}_n}(x) = \sum_{i=1}^n{\omega_i}e^{-\eta_i x}.
\end{align*}
If we order $\{\eta_i\}_{1\leq i \leq n}$ and $\{\omega_i\}_{1\leq i \leq n}$ such that $\eta_1 \leq \eta_2 \leq \ldots \leq \eta_n$ then we would expect that $\eta_1 \approx -\hat{\zeta}$ and $C \approx \omega_1$ and that this approximation gets better with increasing $n$. \\ \\
\noindent Consider the parameter sets
\begin{align*}
\text{PS 1:}\quad (\theta,\mu,\kappa,\sigma,q) = \left(-1,\frac{3}{2},1,2,0\right)\quad\text{and}\quad\text{PS 2:}\quad (\theta,\mu,\kappa,\sigma,q) = \left(-1,4,\frac{1}{2},2,0\right),
\end{align*}
and note that for PS 1 we have $\hat{\zeta}(0) \neq \hat{\rho}$, whereas for $\hat{\zeta}(0) = \hat{\rho}$ for PS 2. In Table \ref{tab:asymp} we compute $\eta_1$ and $\omega_1$ for $n = 5, 10, 5, 25, 50$ and $75$. As a comparison we give the values of $-\hat{\zeta}$ and $C$, where the latter has been computed numerically with the Tanh-Sinh quadrature, in the row \emph{Exact}. We see that indeed $\eta_1$ and $\omega_1$ converge numerically to $-\hat{\zeta}$ and $C$ respectively. The convergence is slower for PS 2; this seems to reflect the fact that the condition $\hat{\zeta}(0) = \hat{\rho}$ is somewhat extreme.
\begin{table}
\begin{center}
\begin{small}
\begin{tabular}{ | c || c | c | c | c |}
\hline
{}& \multicolumn{2}{ c |}{PS 1} & \multicolumn{2}{ c |}{PS 2} \\
\hline
$n$ & $\eta_1$  & $\omega_1$ & $\eta_1$  & $\omega_1$ \\
\hline\hline
5&0.16000002709200613&0.73382866742186084&0.50109487544933153&0.66572495797628802  \\
\hline
10&0.16000000000000098&0.73382714607681802&0.50014426312102660 &0.62302276617409411 \\
\hline
15&0.16000000000000000&0.73382714607669872&0.50004356706493831&0.60879935656462980 \\
\hline
25&0.16000000000000000&0.73382714607669872&0.50000956018928113&0.59742364461027517 \\
\hline
50&0.16000000000000000&0.73382714607669872&0.50000120963128605&0.58889316511778638 \\
\hline
75&0.16000000000000000&0.73382714607669872&0.50000035988511168&0.58604984904352214 \\
\hline
\hline
Exact & 0.16 & 0.73382714607669872  & 0.5 & 0.58036339013109773\\
\hline
\end{tabular}
\end{small}
\caption{The values of $\eta_1$ and $\omega_1$ compared with exact values of $-\hat{\zeta}$ and $C$}\label{tab:asymp}
\end{center}
\end{table}
\subsection{Perpetual Options}\label{sect:ami}
As a more complex application in finance, let us consider the problem of pricing perpetual stock options under the assumption that the stock price $A$ at time $t$ has the form $A_t = A_0e^{X_t}$, where $A_0 > 0$ is the price at time $t=0$ and $X$ is L\'{e}vy process.  In \cite{mordorII}, Theorem 2, it is shown that the value of a perpetual put option under such a model is given by
\begin{align}\label{eq:put}
V = \frac{\e\left[\left(KC - A_0e^{I_{\er}}\right)^+\right]}{C},\quad C := \varphi_r^-(1),
\end{align}
where $r > 0$ is the interest rate, $K > 0$ is the strike price, and $x^+ = \max\{0,x\}$. Further, the option is optimally exercised at time
\begin{align*}
T = \inf\{t \geq 0: A_t \leq KC\}.
\end{align*}
Similar formulas are given for call options and the case where $r = 0$. \\ \\
\noindent If $X$ is taken to be a NIG process, then $C$ can be computed directly using the the results from this paper (see in particular Corollary \ref{cor:gegc} as well as \eqref{eq:lapint} and the discussion thereafter). That is, we can calculate the optimal exercise boundary exactly, and the value function can be approximated by
\begin{align}\label{eq:putopts}
V_n := \frac{\e\left[\left(KC - A_0e^{-\underline{E}_n}\right)^+\right]}{C} 
= \begin{dcases}
\sum_{i=1}^{n}\omega_i\left(\frac{C}{A_0}\right)^{\eta_i} \frac{K^{\eta_i + 1}}{1 + \eta_i} &,\;\; \log\left(\frac{CK}{A_0}\right)  < 0 \\
\sum_{i=1}^{n}\omega_i\left(K - \frac{A_0}{C}\frac{\eta_i}{1 + \eta_i}	\right) &,\;\; \log\left(\frac{CK}{A_0}\right)  \geq 0	
\end{dcases}	,																	
\end{align}
where we have used the fact that $\underline{E}_n$ has a density of the form $f_{\underline{E}_n}(x) = \sum_{i=1}^n\omega_i\eta_ie^{-\eta_i x}$.\\ \\
Consider the parameter set
\begin{align*}
\quad (\theta,\mu,\kappa,\sigma,r,K) = \left(-1,0.723914,1,0.25,0.01,100\right),
\end{align*}
and observe that the parameters have been chosen such that $\psi_X(1) = r$, or equivalently that $e^{-rt}A_t$ is a martingale, i.e. that we are working with a risk neutral martingale measure. 
Using this parameter set, we calculate $V_n$ for various strikes and values of $n$; the results are summarized in Table \ref{tab:opt}. We see that the price converges numerically very rapidly and, in fact, is likely already good enough with $n=3$.
\renewcommand{\arraystretch}{1.3}
\begin{table}
\begin{center}
\begin{tabular}{ | c || c | c | c | c | c |}
  \hline 
\diagbox{$n$}{$A_0$} & 5 & 50 & 100 & 150 & 195 \\
\hline \hline 
3 & 95.010756 & 87.212858 & 85.163045 & 83.990865 & 83.242228\\
\hline
5 & 95.000051 & 87.205429 & 85.158933 & 83.988238 & 83.240350\\
\hline
7 & 95.000000 & 87.205790 & 85.158900 & 83.988135 & 83.240238 \\
\hline
9 & 95.000000 & 87.205757 & 85.158913 & 83.988149 & 83.240249\\
\hline
11 & 95.000000 & 87.205763 & 85.158911 & 83.988147 & 83.240248 \\
\hline
75 & 95.000000 & 87.205762 & 85.158911 & 83.988147 & 83.240248 \\
\hline
\end{tabular}
\caption{$V_n$ calculated for different values of $n$ and $A_0$.}\label{tab:opt}
\end{center}
\end{table}
\clearpage
\appendix
\appendixpage
\section{Solutions of $\psi_X(z) = q$}\label{sec:appendix}
Recall that the Laplace exponent of an NIG process $X$ has the form
\begin{align}\label{eq:apppsi}
\psi_X(s) =  \frac{1}{\kappa} - \frac{1}{\kappa}\sqrt{1 - 2\kappa\theta z - \kappa\sigma^2 z^2} + \mu z.
\end{align}
where $q,\,\kappa,\,\sigma \in \orp$ and $\mu,\,\theta \in \mathbb{R}$. In this appendix we prove some basic facts about the solutions of the equation
\begin{align}\label{eq:appmain}
\psi_X(z) = q,
\end{align}
 which together prove the statements of Proposition \ref{prop:themostannoying}.
First, we recall the definitions
\begin{align*}
\rho := \frac{-\theta + \sqrt{\theta^2 + \frac{\sigma^2}{\kappa}}}{\sigma^2},\qquad \hat\rho := \frac{-\theta - \sqrt{\theta^2 + \frac{\sigma^2}{\kappa}}}{\sigma^2},
\end{align*}
such that $p(z) := (1 - z/\rho)(1 - z/\hat{\rho}) = 1 - 2\kappa\theta z - \kappa\sigma^2z^2$. 
\begin{lemma}\label{lem:obvious}
If $z_0 \in \mathbb{C}$ is a solution of \eqref{eq:appmain} then $z_0 = \zeta$ or $z_0 = \hat{\zeta}$, where
\begin{gather}\label{eq:themroots}
\zeta := \frac{-\theta -\mu +\kappa  \mu  q +\sqrt{d}}{\kappa  \mu ^2+\sigma ^2}, \quad \hat\zeta :=\frac{-\theta -\mu +\kappa  \mu  q -\sqrt{d}}{\kappa  \mu ^2+\sigma ^2}, \\
d := \theta ^2+\mu ^2-2 \theta  \mu  (q\kappa-1)+q \sigma ^2 (2-q\kappa),\nonumber
\end{gather}
and
\begin{align*}
z_0 \in \mathbb{V} := \{z \in \mathbb{C}\,:\,q - 1/\kappa \leq \mu\re(z)\}.
\end{align*}
\end{lemma}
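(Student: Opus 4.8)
\section*{Proof proposal for Lemma \ref{lem:obvious}}

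The plan is to reduce \eqref{eq:appmain} to the associated quadratic equation \eqref{eq:assocquad} and then to read off the sign information encoded in $\mathbb{V}$ from the branch of the square root. First I would isolate the radical in \eqref{eq:apppsi}: the equation $\psi_X(z_0) = q$ is equivalent to $\sqrt{p(z_0)} = 1 - q\kappa + \kappa\mu z_0 = r(z_0)$, where $\sqrt{\cdot}$ denotes the branch used to define $\psi_X$ (the one equal to $+1$ at $z = 0$) and $r(z) := 1 - q\kappa + \kappa\mu z$. Squaring gives $p(z_0) = [r(z_0)]^2$, i.e.\ \eqref{eq:assocquad}. A one-line check shows this is a genuine quadratic in $z$: the coefficient of $z^2$ in $p(z) - [r(z)]^2$ equals $-\kappa(\sigma^2 + \kappa\mu^2)$, which is nonzero since $\kappa, \sigma > 0$. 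Hence \eqref{eq:assocquad} has exactly two roots (with multiplicity), and applying the quadratic formula together with the identity $1 - (1 - q\kappa)^2 = q\kappa(2 - q\kappa)$ identifies these roots as $\zeta$ and $\hat\zeta$ from \eqref{eq:themroots}. This yields $z_0 \in \{\zeta, \hat\zeta\}$.

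For the membership $z_0 \in \mathbb{V}$, I would take real parts in $\sqrt{p(z_0)} = r(z_0)$. The key point is that the branch of the square root in play has nonnegative real part, so that $\re\sqrt{p(z_0)} = \re r(z_0) = 1 - q\kappa + \kappa\mu\re(z_0) \ge 0$, which rearranges, after dividing by $\kappa > 0$, exactly into $q - 1/\kappa \le \mu\re(z_0)$. To justify $\re\sqrt{p(z_0)} \ge 0$ I would first observe that $p$ maps the cut domain $\mathbb{C}\setminus((-\infty,\hat\rho]\cup[\rho,\infty))$ into $\mathbb{C}\setminus(-\infty,0]$: writing $p(z) = \kappa\sigma^2(\rho - z)(z - \hat\rho)$ and $z = x + \iota y$, the imaginary part of $p(z)$ vanishes only when $y = 0$ or $x = (\rho + \hat\rho)/2$, and in both of those cases $p(z) > 0$ (using $\hat\rho < 0 < \rho$). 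Since $p$ is moreover zero-free on this domain, the analytic continuation of $\sqrt{p(z)}$ starting from $z = 0$ is simply the principal branch of $\sqrt{\cdot}$ composed with $p$, which has strictly positive real part there; this gives the required inequality (in fact strictly, for $z_0$ in the open domain).

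Finally I would dispose of the boundary points. If a solution $z_0$ lies on the cut, i.e.\ $z_0 \in \{\rho, \hat\rho\}$ (where $\psi_X$ is understood by continuous extension), then $p(z_0) = 0$, so $\psi_X(z_0) = 1/\kappa + \mu z_0$, and $\psi_X(z_0) = q$ forces $\mu z_0 = q - 1/\kappa$; this places $z_0$ on the boundary of $\mathbb{V}$, hence in $\mathbb{V}$, and it also gives $r(z_0) = 0 = \sqrt{p(z_0)}$, so $z_0$ still satisfies \eqref{eq:assocquad} and therefore $z_0 \in \{\zeta, \hat\zeta\}$. The main obstacle I anticipate is precisely the branch bookkeeping for the square root — making rigorous that the analytic continuation used to define $\psi_X$ really is the principal-branch composition $\sqrt{p(\cdot)}$ on the whole cut plane, since that is exactly what makes the real-part inequality (and hence $\mathbb{V}$) drop out. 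The squaring step, by contrast, is harmless here because the lemma asserts only a necessary condition; deciding which of $\zeta$ and $\hat\zeta$ are genuine solutions, and with what multiplicity, is the content of the later parts of Proposition \ref{prop:themostannoying}.
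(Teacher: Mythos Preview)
Your proposal is correct and follows essentially the same route as the paper: rewrite \eqref{eq:appmain} as $\sqrt{p(z_0)} = r(z_0)$, square to land in the associated quadratic \eqref{eq:assocquad} with roots $\zeta,\hat\zeta$, and read off $z_0\in\mathbb{V}$ from the nonnegativity of the real part of the square root. Your version is in fact more careful than the paper's, which simply asserts that ``the positive square root function maps $\mathbb{C}$ to the right half of the complex plane'' without the branch analysis or boundary discussion you provide.
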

\begin{proof}
Rewriting \eqref{eq:appmain} as
\begin{align}\label{eq:prequad}
r(z) = \sqrt{p(z)},
\end{align}
where $r(z) := \mu\kappa z + 1 - q\kappa$ shows that if $z_0 \notin \mathbb{V}$, then $z_0$ cannot be a solution of \eqref{eq:appmain}, since the positive square root function maps $\mathbb{C}$ to the right half of the complex plane. If, however, $z_0$ does solve \eqref{eq:appmain}, then it also solves the associated quadratic equation, which we get by squaring both sides of \eqref{eq:prequad}. The solutions of the quadratic equation have the form \eqref{eq:themroots}.
\end{proof}
\noindent Together the requirement that $z_0 = \zeta$ or $z_0 = \hat{\zeta}$ and $z_0 \in \mathbb{V}$ will be referred to as Condition A from here on. 
\begin{lemma}\label{lem:poslem}
If $z_0$ satisfies \eqref{eq:appmain}, then $z_0 \in [\hat{\rho},0) \cup (0,\rho]$ and $d > 0$.
\end{lemma}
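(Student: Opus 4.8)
The plan is to reduce everything to a single algebraic inequality. Lemma~\ref{lem:obvious} already tells us that a solution $z_0$ of \eqref{eq:appmain} belongs to $\{\zeta,\hat\zeta\}$ and to $\mathbb{V}=\{z\in\mathbb{C}:q-1/\kappa\le\mu\re(z)\}$. Write $m:=\frac{-\theta-\mu+\kappa\mu q}{\kappa\mu^2+\sigma^2}$, so that $m=\tfrac12(\zeta+\hat\zeta)$ always, $m=\re(\zeta)=\re(\hat\zeta)$ when $d<0$, and $m=\zeta=\hat\zeta$ when $d=0$; a direct computation gives $r(m)=\kappa\mu m+1-q\kappa=\frac{\sigma^2(1-q\kappa)-\kappa\theta\mu}{\kappa\mu^2+\sigma^2}$, and in the cases $d\le 0$ the membership $z_0\in\mathbb{V}$ amounts exactly to $r(m)\ge 0$.

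The key step is the purely algebraic claim
\[
r(m)\ge 0 \quad\Longrightarrow\quad d>0 .
\]
I would prove this from $d=\theta^2+\mu^2-2\theta\mu(q\kappa-1)+q\sigma^2(2-q\kappa)$ by splitting on the value of $q\kappa$. If $q\kappa\le 1$, then $d=\big[\theta^2+\mu^2+2\theta\mu(1-q\kappa)\big]+q\sigma^2(2-q\kappa)>0$ with no hypothesis at all, since $\theta^2+\mu^2+2\theta\mu t\ge 0$ for $t\in[0,1]$ and the last summand $q\sigma^2(2-q\kappa)$ is strictly positive. If $q\kappa>1$, then $r(m)\ge 0$ forces $\kappa\theta\mu\le\sigma^2(1-q\kappa)<0$, whence $\theta\mu<0$ and $-\theta\mu\ge\sigma^2(q\kappa-1)/\kappa$; substituting this lower bound into $-2\theta\mu(q\kappa-1)$ and simplifying yields $d\ge\theta^2+\mu^2+\tfrac{\sigma^2}{\kappa}\big[(q\kappa-1)^2+1\big]>0$. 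In particular the contrapositive $d\le 0\Rightarrow r(m)<0$ holds.

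The lemma then follows quickly. If $d<0$, a solution $z_0\in\{\zeta,\hat\zeta\}$ would be non-real with $\re(z_0)=m$, so $z_0\in\mathbb{V}$ gives $r(m)\ge 0$ and hence $d>0$, a contradiction; likewise if $d=0$, the only candidate is $z_0=m$, and $z_0\in\mathbb{V}$ again forces $r(m)\ge 0$, hence $d>0$, again a contradiction. Therefore $d>0$, so $\zeta,\hat\zeta$ are real; by Lemma~\ref{lem:obvious}, $z_0\in\{\zeta,\hat\zeta\}\subset\mathbb{R}$, and since $z_0$ solves the squared equation $p(z)=r(z)^2$ we get $p(z_0)=r(z_0)^2\ge 0$, i.e.\ $z_0\in[\hat\rho,\rho]$; finally $z_0\ne 0$ because $\psi_X(0)=0<q$. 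Hence $z_0\in[\hat\rho,0)\cup(0,\rho]$ and $d>0$.

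I expect the only real work to be the case analysis proving $r(m)\ge 0\Rightarrow d>0$, and within it the regime $q\kappa>1$, where the bound $-\theta\mu\ge\sigma^2(q\kappa-1)/\kappa$ must be exploited carefully; the degenerate sub-cases ($\theta=\mu=0$, $q\kappa=1$, or $r(m)=0$) are absorbed automatically by the inequalities above, and everything else is straightforward bookkeeping with Lemma~\ref{lem:obvious}.
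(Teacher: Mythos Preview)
Your argument is correct, and it is genuinely more efficient than the paper's. The paper proves $d>0$ by a six-way case split on the signs of $\mu$ and $\theta$ (the cases $\mu=0$; $\theta=0$; $\mu>0,\theta>0$; $\mu<0,\theta<0$; $\mu>0,\theta<0$; $\mu<0,\theta>0$), in each one assuming $d\le 0$ and extracting a contradiction from Condition~A by hand. You instead compute $r(m)=\frac{\sigma^{2}(1-q\kappa)-\kappa\theta\mu}{\kappa\mu^{2}+\sigma^{2}}$ once and reduce everything to the single implication $r(m)\ge 0\Rightarrow d>0$, which you then dispatch with a two-case split on $q\kappa\le 1$ versus $q\kappa>1$. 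The $q\kappa\le 1$ branch needs no hypothesis at all (the bound $\theta^{2}+\mu^{2}+2\theta\mu t\ge(|\theta|-|\mu|)^{2}\ge 0$ for $t\in[0,1)$ plus $q\sigma^{2}(2-q\kappa)>0$ suffices), and in the $q\kappa>1$ branch your chain $-\theta\mu\ge\sigma^{2}(q\kappa-1)/\kappa\Rightarrow d\ge\theta^{2}+\mu^{2}+\tfrac{\sigma^{2}}{\kappa}\big[(q\kappa-1)^{2}+1\big]>0$ checks out (the collapse of $2(q\kappa-1)^{2}+q\kappa(2-q\kappa)$ to $(q\kappa-1)^{2}+1$ is the relevant identity). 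Your final step, deducing $z_{0}\in[\hat\rho,\rho]$ from $p(z_{0})=r(z_{0})^{2}\ge 0$ once $z_{0}$ is known to be real, is the same in spirit as the paper's observation that for real $z_{0}\notin[\hat\rho,\rho]$ the two sides of $r(z_{0})=\sqrt{p(z_{0})}$ would be real and purely imaginary respectively. In short: same conclusion, but your route replaces a sprawling sign-based case analysis with one clean algebraic pivot on $q\kappa$, at the cost of needing the explicit formula for $r(m)$ up front.
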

\begin{proof}
Note that since  $\psi_X(0) = 0$ and $q > 0$ we cannot have $z_0 =0$. For the proof that $z_0$ is real with $d > 0$, we reduce the problem by considering cases for the variables $\mu$ and $\theta$.
 \\ \\
\underline{$\mu = 0$} \\ \\
Condition A reduces to $z_0$ equal to one of
\begin{align}\label{eq:thetas}
\zeta = \frac{-\theta + \sqrt{\theta^2 + q(2-q\kappa)\sigma^2}}{\sigma^2},\qquad \hat\zeta = \frac{-\theta - \sqrt{\theta^2 + q(2-q\kappa)\sigma^2}}{\sigma^2},
\end{align}
and $q \leq 1/\kappa$, i.e. the result is immediate.\\ \\
\underline{$\theta = 0$} \\ \\
In this case the formulas for $\zeta$ and $\hat{\zeta}$ reduce to
\begin{align*}
\zeta = \frac{-\mu +\kappa  \mu  q +\sqrt{\mu ^2+q \sigma ^2 (2-q\kappa)}}{\kappa  \mu ^2+\sigma ^2},\qquad \hat\zeta =\frac{-\mu +\kappa  \mu  q -\sqrt{\mu ^2+q \sigma ^2 (2-q\kappa)}}{\kappa  \mu ^2+\sigma ^2}.
\end{align*}
If we assume that $d = \mu ^2+q \sigma ^2 (2-q\kappa) \leq 0$, then also $(2-q\kappa) \leq 0$. However, under this assumption, the second part of Condition A reduces to 
\begin{align}
q-\frac{1}{\kappa }\leq \frac{\mu  ( \kappa  \mu  q -\mu)}{\kappa  \mu ^2+\sigma ^2} & \Leftrightarrow q  -\frac{1}{\kappa } \leq 0,
\end{align}
so that we arrive at a contradiction.\\ \\
\underline{$\mu > 0$ and $\theta > 0$} \\ \\
If we assume that $d =\theta ^2+\mu ^2-2 \theta  \mu  (q\kappa-1)+q \sigma ^2 (2-q\kappa) \leq 0$, then from Condition A we must have
\begin{align}\label{eq:muthetabigger}
q-\frac{1}{\kappa }\leq \frac{\mu  (\kappa  \mu  q -\theta -\mu )}{\kappa  \mu ^2+\sigma ^2}\quad\Leftrightarrow\quad\theta\mu + \sigma^2\left(q - \frac{1}{\kappa}\right) \leq 0.
\end{align}
From this it follows that we must have $q\kappa < 1$. However, rewriting the inequality for $d$ as
\begin{align*} 
d = \theta^2 + \mu^2 + q\sigma^2 - (q\kappa - 1)(2\theta\mu + q\sigma^2) \leq 0
\end{align*} 
implies that $q\kappa  > 1$, which is a contradiction.\\ \\
\underline{$\mu < 0$ and $\theta < 0$} \\ \\
Proof identical to the case $\mu > 0$ and $\theta > 0$. \\ \\
\underline{$\mu > 0$ and $\theta < 0$} \\ \\
We assume again for contradiction that $d = \theta ^2+\mu ^2-2 \theta  \mu  (q\kappa-1)+q \sigma ^2 (2-q\kappa) \leq 0$. It is clear that if $q\kappa \leq 2$, then we must also have $q\kappa< 1$, otherwise $d$ will certainly be greater than zero and the contradiction is immediate. However, if $ q\kappa < 1$, then
\begin{align*}
\theta ^2+\mu ^2-2 \theta  \mu  (q\kappa-1)+q \sigma ^2 (2-q\kappa) > \theta ^2+\mu ^2 + 2 \theta  \mu+q \sigma ^2 (2-q\kappa) = (\theta +\mu)^2 + q \sigma ^2 (2-q\kappa) > 0;
\end{align*}
it follows that $q\kappa$ must be greater than 2. Under this assumption, we rearrange the inequality $d \leq 0$ and the inequality \eqref{eq:muthetabigger} to get
\begin{align*}
\sigma^2 \geq \frac{\theta ^2+2 \theta  \mu +\mu ^2-2 \theta  \kappa  \mu  q}{q (q\kappa-2)}\quad\text{ and }\quad \sigma^2 \leq \frac{\theta  \kappa  \mu }{1-q\kappa}.
\end{align*}
Rewriting the right hand side of the first inequality as
\begin{align*}
\frac{\theta ^2+2 \theta  \mu +\mu ^2-2 \theta  \kappa  \mu  q}{q (q\kappa-2)} = \frac{(\theta + \mu)^2}{q(q\kappa - 2)} + 2\left(\frac{q\kappa - 1}{q\kappa -2}\right)\left(\frac{\theta\kappa\mu}{1 - q\kappa}\right) > \frac{\theta\kappa\mu}{1 - q\kappa}
\end{align*}
shows that we have once again arrived at a contradiction.\\ \\
\underline{$\mu < 0$ and $\theta > 0$} \\ \\
Proof identical to the case $\mu > 0$ and $\theta < 0$.\\ \\
Finally, to show that $\hat{\rho} \leq z_0 \leq \rho$ we rewrite the equation $\psi_X(z) = q$ as
\begin{align}\label{eq:rewri}
1 -q\kappa  + \kappa \mu z = \sqrt{\left(1 - \frac{z}{\rho}\right)\left(1 - \frac{z}{\hat{\rho}}\right)}.
\end{align}
We see that if we had $z_0 < \hat{\rho}$ or $\rho < z_0$, then the left hand side of \eqref{eq:rewri} would be a real number, whereas the right hand side of \eqref{eq:rewri} would be purely imaginary number, i.e. the equality would not hold so that $z_0$ could not be a solution of \eqref{eq:appmain}.
\end{proof}
\noindent The following result then follows almost immediately from Lemmas \ref{lem:obvious} and \ref{lem:poslem}.
\begin{lemma}\label{lem:tietogether}
A number $z_0$ satisfies \eqref{eq:appmain} iff $z_0 \in [\hat{\rho},0)\cup(0,\rho]$, $z_0 = \zeta$ or $z_0 = \hat{\zeta}$, and $q - \frac{1}{\kappa} \leq \mu z_0$. 
\end{lemma}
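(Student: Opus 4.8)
The plan is to obtain the ``only if'' direction for free from Lemmas \ref{lem:obvious} and \ref{lem:poslem}, and to prove the ``if'' direction by exploiting the fact, built into the definitions \eqref{eq:themroots}, that $\zeta$ and $\hat{\zeta}$ are exactly the roots of the associated quadratic equation obtained by squaring \eqref{eq:prequad}, i.e. $[r(z)]^2 = p(z)$ with $r(z) = 1 - q\kappa + \kappa\mu z$.

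First I would handle the forward implication. Suppose $z_0$ satisfies \eqref{eq:appmain}. Lemma \ref{lem:poslem} gives $z_0 \in [\hat{\rho},0)\cup(0,\rho]$ (and $d > 0$, which is not needed here); in particular $z_0$ is real. Lemma \ref{lem:obvious} gives $z_0 = \zeta$ or $z_0 = \hat{\zeta}$ together with $z_0 \in \mathbb{V}$, and since $z_0$ is real this last condition reads $q - 1/\kappa \leq \mu\re(z_0) = \mu z_0$. That is precisely the list of conditions claimed, so this direction is immediate.

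For the converse, assume $z_0 \in [\hat{\rho},0)\cup(0,\rho]$, $z_0 \in \{\zeta,\hat{\zeta}\}$, and $q - 1/\kappa \leq \mu z_0$. Since $\zeta$ and $\hat{\zeta}$ are the solutions of $[r(z)]^2 = p(z)$, we have $[r(z_0)]^2 = p(z_0)$. Because $p(z) = (1 - z/\rho)(1 - z/\hat{\rho})$ is a downward parabola vanishing at $\hat{\rho} < 0 < \rho$, it is nonnegative on $[\hat{\rho},\rho]$, hence $p(z_0) \geq 0$ and $\sqrt{p(z_0)}$ is an honest nonnegative real number with $\sqrt{p(z_0)} = \pm r(z_0)$. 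The hypothesis $q - 1/\kappa \leq \mu z_0$ rearranges to $r(z_0) = 1 - q\kappa + \kappa\mu z_0 \geq 0$, which forces the plus sign, i.e. $\sqrt{p(z_0)} = r(z_0)$. Substituting into \eqref{eq:apppsi},
\begin{align*}
\psi_X(z_0) = \frac{1}{\kappa} - \frac{1}{\kappa}\sqrt{p(z_0)} + \mu z_0 = \frac{1}{\kappa} - \frac{1}{\kappa}\bigl(1 - q\kappa + \kappa\mu z_0\bigr) + \mu z_0 = q,
\end{align*}
so $z_0$ solves \eqref{eq:appmain}.

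The only delicate point — and the step I expect to require the most care — is the branch of the square root: the identity $[r(z_0)]^2 = p(z_0)$ pins down $\sqrt{p(z_0)}$ only up to sign, and it is exactly the inequality $q - 1/\kappa \leq \mu z_0$ that selects the nonnegative branch consistent with the principal square root used to define $\psi_X$. Everything else is bookkeeping with the definitions of $\rho$, $\hat{\rho}$, $\zeta$, $\hat{\zeta}$, and $r$, so no further estimates are needed.
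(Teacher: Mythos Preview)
Your proposal is correct and follows essentially the same route as the paper: the forward direction is deferred to Lemmas~\ref{lem:obvious} and~\ref{lem:poslem}, and the converse is obtained by noting that $z_0\in\{\zeta,\hat\zeta\}$ gives $[r(z_0)]^2=p(z_0)$, while the inequality $q-1/\kappa\le\mu z_0$ forces $r(z_0)\ge0$ and hence selects the correct branch $\sqrt{p(z_0)}=r(z_0)$. Your write-up is in fact a bit more explicit than the paper's (you spell out why $p(z_0)\ge0$ via $z_0\in[\hat\rho,\rho]$, though this also follows directly from $[r(z_0)]^2=p(z_0)$ with $z_0$ real), but the argument is the same.
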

\begin{proof}\ \\
\noindent($\Rightarrow$)$\quad$ This direction is proven in Lemmas \ref{lem:obvious} and \ref{lem:poslem}.\ \\ \\
\noindent ($\Leftarrow$)$\quad$ The assumption that $z_0 = \zeta$ or $z_0 = \hat{\zeta}$ implies
\begin{align}\label{eq:oncemore}
[r(z_0)]^2 = (\mu\kappa z_0 + 1 - q\kappa)^2 = p(z_0).
\end{align}
Further, since $z_0$ is assumed to be real such that $q - \frac{1}{\kappa} \leq \mu z_0$, taking the square root of both sides of \eqref{eq:oncemore} yields $\mu\kappa z_0 + 1 - q\kappa$ on the right-hand side, i.e. $z_0$ must solve \eqref{eq:appmain}.
\end{proof}
\begin{lemma}\label{lem:notwo}
If $\zeta$ (resp. $\hat{\zeta}$) satisfies \eqref{eq:appmain}, then  $0 < \zeta$ (resp. $\hat{\zeta}< 0$).
\end{lemma}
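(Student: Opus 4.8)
The plan is to establish the claim for $\zeta$ and note that the argument for $\hat\zeta$ is the mirror image. So suppose $\zeta$ is a solution of \eqref{eq:appmain}. By Lemma~\ref{lem:poslem} we already know $\zeta\in[\hat\rho,0)\cup(0,\rho]$ (hence $\zeta\neq 0$) and $d>0$; since $d>0$ the two roots satisfy $\zeta>\hat\zeta$. Assume, for contradiction, that $\zeta<0$, so that $\hat\zeta<\zeta<0$. First I would record the consequence of $\zeta$ solving $\psi_X(\zeta)=q$: rearranging that equation gives $\sqrt{p(\zeta)}=r(\zeta)=1-q\kappa+\kappa\mu\zeta\ge 0$, that is, $\mu\zeta\ge q-\tfrac1\kappa$.

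Next I would extract sign information from the associated quadratic $p(z)=[r(z)]^2$ whose roots are $\zeta,\hat\zeta$. Expanding both sides and comparing coefficients, Vieta's formulas give $\zeta\hat\zeta = q(q\kappa-2)/(\sigma^2+\kappa\mu^2)$. Since $\hat\zeta<\zeta<0$ the left-hand side is strictly positive, and because $q>0$ and $\sigma^2+\kappa\mu^2>0$ this forces $q\kappa>2$. In particular $q-\tfrac1\kappa>\tfrac1\kappa>0$, so from $\mu\zeta\ge q-\tfrac1\kappa$ we get $\mu\zeta>0$, and combined with $\zeta<0$ this yields $\mu<0$.

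Now comes the crux of the argument: I claim that $\hat\zeta$ must \emph{also} be a solution of \eqref{eq:appmain}. Indeed, since $\mu<0$ and $\hat\zeta<\zeta$ we have $\mu\hat\zeta>\mu\zeta\ge q-\tfrac1\kappa$, hence $r(\hat\zeta)=1-q\kappa+\kappa\mu\hat\zeta>0$. Because $\hat\zeta$ is a root of $p(z)=[r(z)]^2$ we get $p(\hat\zeta)=[r(\hat\zeta)]^2\ge 0$, so $\hat\zeta\in[\hat\rho,\rho]$ and $\sqrt{p(\hat\zeta)}=r(\hat\zeta)$; substituting back shows $\psi_X(\hat\zeta)=q$. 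Finally I would invoke that $\psi_X$, being the cumulant generating function of the L\'evy process $X$, is convex on its effective domain $(\hat\rho,\rho)$ and therefore on $[\hat\rho,\rho]$ by continuity. We have $\psi_X(0)=0<q=\psi_X(\hat\zeta)=\psi_X(\zeta)$ with $\hat\zeta<\zeta<0$; writing $\zeta=\lambda\hat\zeta+(1-\lambda)\cdot 0$ with $\lambda=\zeta/\hat\zeta\in(0,1)$, convexity gives $q=\psi_X(\zeta)\le\lambda\psi_X(\hat\zeta)+(1-\lambda)\psi_X(0)=\lambda q<q$, a contradiction. Hence $\zeta>0$, and the symmetric computation gives $\hat\zeta<0$ when $\hat\zeta$ solves \eqref{eq:appmain}.

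The one genuinely non-mechanical step is the implication "$\zeta<0$ $\Rightarrow$ $\hat\zeta$ lies on the principal branch, hence is itself a genuine solution": once this is in place, strict convexity of the Laplace exponent (which cannot attain a value $q$ twice on one side of a point where it is below $q$) closes the argument at once. Everything else is routine Vieta bookkeeping used to pin down the signs of $q\kappa-2$ and of $\mu$; one should double-check, as always in this appendix, the exact constant term $q\kappa(2-q\kappa)$ obtained when squaring $r(z)$ against $p(z)$, since the whole sign chain hinges on it.
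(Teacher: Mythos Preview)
Your argument is correct and takes a genuinely different route from the paper's. The paper proceeds by a five-way case split on the signs of $\mu$ and $\theta$: in each case it assumes $\zeta<0$ (which forces $q\kappa>2$ and certain sign constraints), rewrites Condition~A as the inequality $\kappa\theta\mu+(q\kappa-1)\sigma^2\le \kappa\mu\sqrt{d}$, and then derives an algebraic contradiction by squaring and solving for $\theta$. No structural property of $\psi_X$ beyond its defining formula is used; every step is a direct manipulation of the parameter inequalities.

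Your approach sidesteps the case analysis almost entirely. You use Vieta's formula $\zeta\hat\zeta=q(q\kappa-2)/(\sigma^2+\kappa\mu^2)$ only to extract $q\kappa>2$ and hence $\mu<0$, and then observe that once $\mu<0$ and $\hat\zeta<\zeta$, the other root $\hat\zeta$ automatically satisfies $r(\hat\zeta)>r(\zeta)\ge 0$, so it too solves \eqref{eq:appmain}. The contradiction then comes from convexity of the Laplace exponent on $(\hat\rho,\rho)$: a convex function with $\psi_X(0)=0<q$ cannot equal $q$ at two distinct points both to the left of $0$. This is shorter and more conceptual; the trade-off is that it imports a (standard) probabilistic fact about cumulant generating functions, whereas the paper's proof stays entirely within elementary algebra. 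Both are equally valid; your version has the advantage of making transparent \emph{why} the result holds, while the paper's version is self-contained relative to the rest of the appendix.
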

\begin{proof}
Note that $\zeta <0$ implies that $-\theta + (q\kappa-1)\mu < 0$ and that
\begin{align*}
(\kappa  \mu  q -\theta -\mu )^2 - (\theta ^2+\mu ^2-2 \theta  \mu  (q\kappa-1)+q \sigma ^2 (2-q\kappa)) > 0\quad \Leftrightarrow \quad q (q\kappa-2) \left(\kappa  \mu ^2+\sigma ^2\right) > 0,
\end{align*}
where the latter statement is equivalent to the requirement that $kq > 2$. Recall also that if $\zeta$ satisfies \eqref{eq:appmain} we must have from Condition A that
\begin{align}
\kappa\theta\mu + (q\kappa - 1)\sigma^2 &\leq \kappa\mu\sqrt{\theta ^2+\mu ^2-2 \theta  \mu  (q\kappa-1)+q \sigma ^2 (2-q\kappa)}.\label{eq:keyineq}
\end{align}
Using these statements we consider various cases for $\mu$ and show that the assumption $\zeta$ satisfies \eqref{eq:appmain} and $\zeta < 0$ leads to a contradiction in each case.\\ \\
\underline{$\mu = 0$} \\ \\
Under the assumptions \eqref{eq:keyineq} results in the inequality $(q\kappa - 1)\sigma^2 \leq 0$, which is a contradiction since we have shown that $kq > 2$.\\ \\
\underline {$\mu < 0$ and $\theta \leq 0$} \\ \\
Similar to the case $\mu = 0$, we will have strictly positive quantity on the left-hand side of the inequality \eqref{eq:keyineq} whereas the right-hand side is at most zero.\\ \\
\underline {$\mu < 0$ and $\theta > 0$} \\ \\
Since $\mu < 0$ we must have
\begin{align*}
\kappa\theta\mu + (q\kappa - 1)\sigma^2 \leq 0\quad\Leftrightarrow\quad \frac{(1-q\kappa)\sigma^2}{\kappa \mu} \leq \theta
\end{align*}
in order for \eqref{eq:keyineq} to hold. However squaring both sides of \eqref{eq:keyineq} and solving for $\theta$ yields, after some algebra,
\begin{align}\label{eq:oneway}
\theta \leq \frac{\mu}{2(q\kappa - 1)} + \frac{\sigma^2(1- q\kappa)}{2\kappa\mu}\quad \Rightarrow\quad \theta < \frac{(1-q\kappa)\sigma^2}{\kappa \mu},
\end{align}
so that we arrive once more at a contraction.
\\ \\
\underline {$\mu > 0$ and $\theta \leq 0$} \\ \\
We have
\begin{align}\label{eq:onemorecond}
\zeta < 0\quad\Leftrightarrow\quad -\theta + \mu(q\kappa -1) < -\sqrt{\theta ^2+\mu ^2-2 \theta  \mu  (q\kappa-1)+q \sigma ^2 (2-q\kappa)},
\end{align}
which is a contradiction because the left-hand side of the above inequality is a strictly positive number and the right-hand side is at most zero.
\\ \\
\underline {$\mu > 0$ and $\theta > 0$} \\ \\
From \eqref{eq:onemorecond} we have
\begin{align*}
 -\theta + \mu(q\kappa -1) < 0 \quad \Leftrightarrow \quad \mu(q\kappa -1) < \theta,
\end{align*}
but squaring both sides of \eqref{eq:keyineq} and solving for $\theta$ yields,   
\begin{align}\label{eq:anotherway}
\theta \leq \mu(qk -1) + \frac{\mu(1 - 2(q\kappa - 1)^2)}{2(q\kappa -1) } + \frac{\sigma^2(1-q\kappa)}{2\kappa \mu} \quad \Rightarrow \quad \theta < \mu(qk -1),
\end{align}
which is a contradiction. Note: To reconcile the formulas in \eqref{eq:oneway} and \eqref{eq:anotherway} simply add an subtract $\mu(qk -1)$ on the right-hand side of the first inequality in \eqref{eq:oneway}.\\ \\
The proof for $\hat{\zeta} < 0$ follows from identical arguments.
\end{proof}
\begin{lemma}\label{lem:multiplicity}
If $z_0$ satisfies \eqref{eq:appmain} and $\hat{\rho} < z_0 < \rho$, then $z_0$ is a simple zero of $q - \psi_X(z)$.
\end{lemma}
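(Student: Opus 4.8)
The plan is to show that $\psi_X'(z_0) \neq 0$: since $z_0$ is a zero of $q - \psi_X$, it is a simple zero exactly when $(q - \psi_X)'(z_0) = -\psi_X'(z_0)$ is nonzero. Because $\hat{\rho} < z_0 < \rho$ (strict), we have $p(z_0) > 0$, so $\sqrt{p(z)}$, and hence $\psi_X(z) = \tfrac{1}{\kappa} - \tfrac{1}{\kappa}\sqrt{p(z)} + \mu z$, is analytic in a neighbourhood of $z_0$, with $\psi_X'(z) = \mu - \dfrac{p'(z)}{2\kappa\sqrt{p(z)}}$.

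First I would record two consequences of the earlier lemmas. By Lemma \ref{lem:obvious} (equivalently Lemma \ref{lem:tietogether}), $z_0 \in \{\zeta, \hat{\zeta}\}$, so $z_0$ is a root of the quadratic $h(z) := p(z) - [r(z)]^2$, where $r(z) = 1 - q\kappa + \kappa\mu z$; and by Lemma \ref{lem:poslem}, $d > 0$, so $\zeta \neq \hat{\zeta}$, i.e. $h$ has two distinct roots. Since the leading coefficient of $h$ equals $-\kappa(\sigma^2 + \kappa\mu^2) \neq 0$, the polynomial $h$ genuinely has degree $2$, hence each of its two distinct roots is simple; in particular $h'(z_0) \neq 0$. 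Finally, the statement that $z_0$ solves $\psi_X(z_0) = q$ is, after rewriting as in \eqref{eq:prequad}, precisely $\sqrt{p(z_0)} = r(z_0)$, so $r(z_0) = \sqrt{p(z_0)} > 0$.

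The key step is then a one-line computation linking $\psi_X'(z_0)$ to $h'(z_0)$. Since $h'(z) = p'(z) - 2r(z)r'(z)$ with $r'(z) = \kappa\mu$, evaluating at $z_0$ and substituting $r(z_0) = \sqrt{p(z_0)}$ gives $h'(z_0) = p'(z_0) - 2\kappa\mu\sqrt{p(z_0)} = -2\kappa\sqrt{p(z_0)}\,\psi_X'(z_0)$. As $\kappa > 0$, $\sqrt{p(z_0)} > 0$, and $h'(z_0) \neq 0$, we conclude $\psi_X'(z_0) \neq 0$, which is the claim. Equivalently, arguing by contradiction: if $\psi_X'(z_0) = 0$ then $h(z_0) = h'(z_0) = 0$, so $z_0$ would be a double root of the degree-$2$ polynomial $h$, forcing $\zeta = \hat{\zeta} = z_0$ and contradicting $d > 0$.

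I do not expect a serious obstacle here. The only points requiring care are confirming that $\sqrt{p(z)}$ is analytic near $z_0$ — this is exactly where the strict inequalities $\hat{\rho} < z_0 < \rho$ are used — and that $h$ does not degenerate to degree $\le 1$, which uses $\kappa, \sigma \in \orp$. Everything else is the short, direct calculation above together with Lemmas \ref{lem:obvious} and \ref{lem:poslem}.
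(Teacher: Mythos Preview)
Your proof is correct and follows essentially the same line as the paper: both arguments reduce the claim to the fact that $z_0$ is a simple root of the quadratic $h(z)=p(z)-[r(z)]^2$, invoking $d>0$ from Lemma~\ref{lem:poslem}. The paper transfers the simplicity by multiplying $\sqrt{p}-r$ by its conjugate $\sqrt{p}+r$ (which is nonzero at $z_0$ since $\hat\rho<z_0<\rho$) and arguing by contradiction, whereas you compute $\psi_X'(z_0)$ directly and identify it with $-h'(z_0)/(2\kappa\sqrt{p(z_0)})$ --- a slightly more direct execution of the same idea.
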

\begin{proof}
From their definition \eqref{eq:themroots} and Lemma \ref{lem:poslem} it is clear that if at least one of $\zeta$ or $\hat{\zeta}$ is a solution of \eqref{eq:appmain}, then $\zeta \neq \hat{\zeta}$, i.e. $p(z) - [r(z)]^2$ has no zeros of multiplicity two when at least one of its zeros is a solution of $q = \psi_X(z)$. Suppose $z_0 \in (\hat{\rho},\rho)$ is a zero of multiplicity two of $q - \psi_X(z)$. Then by definition, there is an open ball $B$ around $z_0$ and a function $f(z)$ analytic on $B$ such that $f(z_0) \neq 0$ and 
\begin{align}\label{eq:notwozeros}
\kappa(q - \psi_X(z)) = \sqrt{p(z)} - r(z)= (z - z_0)^2f(z),\quad z \in B.
\end{align}
However, multiplying both sides of \eqref{eq:notwozeros} by the analytic (on $B$) function $\sqrt{p(z)} + r(z)$ yields
\begin{align}
p(z) - [r(z)]^2 = (z - z_0)^2f(z)\left(\sqrt{p(z)} + r(z)\right).
\end{align}
Since $z_0$ is a zero of both $\sqrt{p(z)} - r(z)$ and $\sqrt{p(z)} + r(z)$ only if $z_0 = \rho$ or $z_0 = \hat{\rho}$, the preceding shows that $p(z) - [r(z)]^2$ can be factored into the product of $(z - z_0)^2$ and the function $f(z)\left(\sqrt{p(z)} + r(z)\right)$, which is analytic on $B$ and non-zero at $z_0$. That is, $z_0$ must also be a zero of multiplicity two for $p(z) - [r(z)]^2$, which is a contradiction.
\end{proof}
\begin{lemma}\label{lem:notpastrho}
If $\zeta \in \mathbb{R}$ (resp. $\hat{\zeta} \in \mathbb{R}$) then $\hat{\rho} \leq \zeta \leq \rho$ (resp. $\hat{\rho} \leq \hat{\zeta} \leq \rho$).
\end{lemma}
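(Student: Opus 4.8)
The plan is to exploit the fact, recorded immediately after equation \eqref{eq:assocquad}, that $\zeta$ and $\hat\zeta$ are precisely the two roots of the associated quadratic equation $p(z) = [r(z)]^2$, where $r(z) = 1 - q\kappa + \kappa\mu z$ is a polynomial with real coefficients. The whole argument is then just a sign analysis and requires nothing beyond this.

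First I would note that if $\zeta \in \mathbb{R}$, then $r(\zeta) \in \mathbb{R}$, and substituting $\zeta$ into its defining equation gives $p(\zeta) = [r(\zeta)]^2 \geq 0$. The identical substitution works verbatim for $\hat\zeta$, so it suffices to show that $p(w) \geq 0$ forces $\hat\rho \leq w \leq \rho$ for real $w$.

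Next I would recall from \eqref{eq:rhos} that $p(z) = (1 - z/\rho)(1 - z/\hat\rho) = 1 - 2\kappa\theta z - \kappa\sigma^2 z^2$ is a quadratic with negative leading coefficient $-\kappa\sigma^2$, and that its roots satisfy $\hat\rho < 0 < \rho$: indeed $\rho - \hat\rho = 2\sqrt{\theta^2 + \sigma^2/\kappa}/\sigma^2 > 0$ and $\rho\hat\rho = -1/(\kappa\sigma^2) < 0$. A downward-opening parabola with these two real roots is nonnegative exactly on the closed interval between them, i.e. $p(w) \geq 0 \iff \hat\rho \leq w \leq \rho$. Combining this with $p(\zeta) \geq 0$ (resp. $p(\hat\zeta) \geq 0$) yields $\hat{\rho} \leq \zeta \leq \rho$ (resp. $\hat{\rho} \leq \hat{\zeta} \leq \rho$).

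I do not anticipate any real obstacle here; the only thing to handle carefully is phrasing the argument so that it applies uniformly whether $\zeta$ and $\hat\zeta$ are distinct ($d > 0$) or coincide ($d = 0$), which the reasoning above does automatically since it only uses that a real root makes the left-hand side of $p(z) = [r(z)]^2$ nonnegative.
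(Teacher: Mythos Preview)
Your argument is correct and is essentially the same as the paper's: both use that $\zeta$ (resp.\ $\hat\zeta$) solves $p(z)=[r(z)]^2$, so $p(\zeta)\ge 0$, and then invoke the sign of the downward-opening parabola $p$ outside $[\hat\rho,\rho]$. The only cosmetic difference is that the paper phrases it as a contradiction (if $\zeta>\rho$ or $\zeta<\hat\rho$ then $p(\zeta)<0$) while you state the equivalence $p(w)\ge 0\iff \hat\rho\le w\le\rho$ directly.
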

\begin{proof}
By definition $\zeta$ is a solution of 
\begin{align}\label{eq:assoncemore}
p(z) = \left(1 - \frac{z}{\rho}\right)\left(1 - \frac{z}{\hat{\rho}}\right) = [r(z)]^2.
\end{align}
If $\zeta \in \mathbb{R}$ and $\zeta > \rho$ or $\zeta < \hat{\rho}$, then the left-hand side of the above equation, when evaluated at $\zeta$, yields a strictly negative number, and the right-hand evaluated at $\zeta$ yields a number that is greater than or equal to zero. That is, $\zeta$ does not satisfy \eqref{eq:assoncemore}, i.e. we have arrived at a contradiction. The same exercise can be repeated with $\hat{\zeta}$ and yields the same conclusion.
\end{proof}
\begin{lemma}\label{lem:multiplicity}
Neither $\rho = \hat{\zeta}$ nor $\hat{\rho} = \zeta$ is possible.
\end{lemma}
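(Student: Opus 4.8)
The plan is to argue by contradiction, exploiting the structural difference between the two pairs of points: $\rho$ and $\hat\rho$ are the roots of $p(z)$, whereas $\zeta$ and $\hat\zeta$ are the roots of the \emph{associated} quadratic $p(z) = [r(z)]^2$ (this is how they are defined; cf. \eqref{eq:assocquad} and the proof of Lemma~\ref{lem:obvious}). An equality between one root of each kind will therefore force $r$ to vanish at that point, which in turn will turn the point into a genuine solution of \eqref{eq:appmain} and contradict the sign information already established in Lemma~\ref{lem:notwo}.

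Concretely, I would first assume $\rho = \hat\zeta$. Since $\hat\zeta$ solves $p(z) = [r(z)]^2$, substituting $\hat\zeta = \rho$ and using $p(\rho) = 0$ gives $[r(\rho)]^2 = 0$, hence $r(\rho) = 1 - q\kappa + \kappa\mu\rho = 0$, i.e. $q = \tfrac1\kappa + \mu\rho$. On the other hand, because $p(\rho) = 0$ the Laplace exponent is unambiguous at $\rho$ and $\psi_X(\rho) = \tfrac1\kappa - \tfrac1\kappa\sqrt{p(\rho)} + \mu\rho = \tfrac1\kappa + \mu\rho$, which by the previous line equals $q$. Thus $\rho$ is an actual solution of \eqref{eq:appmain}, and since $\rho = \hat\zeta$, so is $\hat\zeta$. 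Then Lemma~\ref{lem:notwo} yields $\hat\zeta < 0$, contradicting $\hat\zeta = \rho > 0$ (recall $\rho>0$ always, as $\sqrt{\theta^2 + \sigma^2/\kappa} > \theta$).

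The case $\hat\rho = \zeta$ is entirely symmetric: one obtains $r(\hat\rho) = 0$, hence $q = \tfrac1\kappa + \mu\hat\rho = \psi_X(\hat\rho)$, so $\hat\rho$ and therefore $\zeta$ solves \eqref{eq:appmain}, and Lemma~\ref{lem:notwo} forces $\zeta > 0$, contradicting $\zeta = \hat\rho < 0$. No case split on the sign of $\mu$ is needed: if $\mu = 0$, then $r(\rho) = 0$ merely says $q = 1/\kappa$, which still gives $\psi_X(\rho) = 1/\kappa = q$, and the argument goes through verbatim. There is no real obstacle here; the only point requiring a little care is the branch of the square root in $\psi_X$, which is harmless precisely at $\rho$ and $\hat\rho$ because $p$ vanishes there.
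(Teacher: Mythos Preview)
Your argument is correct. It differs from the paper's proof, which is more structural: there one observes that if $\zeta,\hat\zeta\notin\mathbb{R}$ the claim is immediate, while if they are real then $\hat\zeta<\zeta$ together with Lemma~\ref{lem:notpastrho} (both $\zeta,\hat\zeta$ lie in $[\hat\rho,\rho]$) rules out $\rho=\hat\zeta$ (since that would force $\zeta>\rho$) and $\hat\rho=\zeta$ (since that would force $\hat\zeta<\hat\rho$). Your route instead pushes the hypothetical equality back to the original equation: $\rho=\hat\zeta$ forces $r(\rho)=0$, hence $\psi_X(\rho)=q$, and then Lemma~\ref{lem:notwo} supplies the sign contradiction. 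The paper's approach is slightly shorter but tacitly uses the strict inequality $\hat\zeta<\zeta$, i.e.\ $d>0$; your argument sidesteps this, because once $\hat\zeta=\rho$ is shown to solve \eqref{eq:appmain}, Lemma~\ref{lem:poslem} already gives $d>0$ as a by-product, so the edge case $d=0$ never needs separate treatment. Either way, the dependencies are the same (the appendix lemmas on the location and sign of solutions), just invoked in a different order.
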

\begin{proof}
By definition, $\zeta$ and $\hat{\zeta}$ are either both real or both have nonzero imaginary part. In the latter case, the result follows immediately. If they are real, it is clear that $\hat{\zeta} < \zeta$. Therefore, the assumption that either $\rho = \hat{\zeta}$ or $\hat{\rho} = \zeta$ contradicts the result of Lemma \ref{lem:notpastrho}, which stipulates that both $\rho$ and $\hat{\rho}$ lie in the interval $[\hat{\rho},\rho]$.
\end{proof}
\begin{lemma}
Both $\rho = \zeta$ and $\hat{\rho} = \hat{\zeta}$ iff $\mu = 0$ and $q = 1/\kappa$.
\end{lemma}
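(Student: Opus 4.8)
The plan is to use the defining property of $\zeta$ and $\hat{\zeta}$: by construction (see the discussion around \eqref{eq:assocquad} and Lemma \ref{lem:obvious}) they are precisely the two roots of the quadratic obtained by squaring $\psi_X(z) = q$, i.e.\ they satisfy $p(z) = [r(z)]^2$ with $r(z) = 1 - q\kappa + \kappa\mu z$ an affine function of $z$; meanwhile $\rho$ and $\hat{\rho}$ are exactly the two roots of $p(z)$, and from \eqref{eq:rhos} these are \emph{distinct}, in fact $\hat{\rho} < 0 < \rho$.

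For the ``only if'' direction I would suppose $\rho = \zeta$ and $\hat{\rho} = \hat{\zeta}$. Evaluating the identity $p(\zeta) = [r(\zeta)]^2$ at $\zeta = \rho$ and using $p(\rho) = 0$ gives $[r(\rho)]^2 = 0$, hence $r(\rho) = 0$; likewise $r(\hat{\rho}) = 0$. Thus the polynomial $r$, of degree at most one, vanishes at the two distinct points $\rho$ and $\hat{\rho}$, which forces $r \equiv 0$. Comparing coefficients in $r(z) = (1 - q\kappa) + \kappa\mu z$ and recalling $\kappa > 0$ then yields $\mu = 0$ and $q = 1/\kappa$.

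For the converse, assume $\mu = 0$ and $q = 1/\kappa$, so that $r(z) \equiv 1 - q\kappa = 0$ and the associated quadratic $p(z) = [r(z)]^2$ reduces to $p(z) = 0$, whose root set is $\{\rho, \hat{\rho}\}$. It remains to confirm that the labelling in \eqref{eq:themroots} matches, i.e.\ that $\zeta = \rho$ and $\hat{\zeta} = \hat{\rho}$ rather than the other way around. Substituting $\mu = 0$ and $q = 1/\kappa$ into \eqref{eq:themroots} gives $d = \theta^2 + \sigma^2/\kappa > 0$ and
\[
\zeta = \frac{-\theta + \sqrt{\theta^2 + \sigma^2/\kappa}}{\sigma^2} = \rho,
\qquad
\hat{\zeta} = \frac{-\theta - \sqrt{\theta^2 + \sigma^2/\kappa}}{\sigma^2} = \hat{\rho},
\]
which completes the argument. (Alternatively, since $\psi_X(\rho) = \tfrac{1}{\kappa} - \tfrac{1}{\kappa}\sqrt{p(\rho)} + \mu\rho = \tfrac{1}{\kappa} = q$ here, both $\zeta$ and $\hat{\zeta}$ solve \eqref{eq:appmain} in this case, so Lemma \ref{lem:notwo} gives $0 < \zeta$ and $\hat{\zeta} < 0$, which together with $\{\zeta,\hat{\zeta}\} = \{\rho,\hat{\rho}\}$ and $\hat{\rho} < 0 < \rho$ pins down the correspondence.)

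I do not expect a genuine obstacle in this argument; the only point needing a little attention is the bookkeeping of which root is $\zeta$ and which is $\hat{\zeta}$ in the converse direction, and this is resolved by the explicit substitution displayed above.
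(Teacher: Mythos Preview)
Your proof is correct and follows essentially the same route as the paper: both directions hinge on the associated quadratic $p(z)=[r(z)]^2$, using $p(\rho)=p(\hat\rho)=0$ to force $r(\rho)=r(\hat\rho)=0$ and hence $r\equiv 0$ for the forward direction, and observing that $r\equiv 0$ collapses the quadratic to $p(z)=0$ for the converse. Your additional care in the converse direction---verifying via explicit substitution into \eqref{eq:themroots} that the labels match, i.e.\ $\zeta=\rho$ rather than $\zeta=\hat\rho$---is a point the paper leaves implicit, so your version is in fact slightly more complete.
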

\begin{proof}\ \\
\noindent($\Rightarrow$)$\quad$ Since both $\rho$ and $\hat{\rho}$ satifsy the associated quadratic equation $p(z) = [r(z)]^2$, we may plug these values in to get the following system of equations
\begin{align*}
\mu\kappa\rho + 1 - q\kappa = 0\quad\text{ and }\quad \mu\kappa\hat{\rho} + 1 - q\kappa = 0,
\end{align*}
from which it follows that $\mu = q - 1/\kappa = 0$.\ \\ \\
\noindent($\Leftarrow$)$\quad$ If $\mu = q - 1/\kappa = 0$ then the associated quadratic equation $p(z) = [r(z)]^2$ reduces to
\begin{align*}
0 = \left(1 - \frac{z}{\rho}\right)\left(1 - \frac{z}{\hat{\rho}}\right),
\end{align*}
from which it is clear that $\rho = \zeta$ and $\hat{\rho} = \hat{\zeta}$.
\end{proof}
\clearpage
\bibliographystyle{plain}
\bibliography{references}
\end{document}